\newtheorem{theorem}{Theorem}[section]
\newtheorem{cor}[theorem]{Corollary}
\newtheorem{lem}[theorem]{Lemma}
\newtheorem{prop}[theorem]{Proposition}
\newtheorem{conj}[theorem]{Conjecture}
\theoremstyle{definition}
\newtheorem{defn}[theorem]{Definition}
\newtheorem{ex}[theorem]{Example}
\theoremstyle{remark}
\newtheorem{rem}[theorem]{Remark}
\newtheorem{rems}[theorem]{Remarks}
\newcommand{\EE}{\mathcal{E}}
\newcommand{\OO}{\mathscr{O}}
\newcommand{\HH}{\mathrm{H}}
\newcommand{\End}{\mathrm{End}}
\newcommand{\Hom}{\mathrm{Hom}}
\newcommand{\bA}{\mathbb{A}}
\newcommand{\bP}{\mathbb{P}}
\newcommand{\bN}{\mathbb{N}}
\newcommand{\bZ}{\mathbb{Z}}
\newcommand{\bQ}{\mathbb{Q}}
\newcommand{\bR}{\mathbb{R}}
\newcommand{\bC}{\mathbb{C}}
\newcommand{\bV}{\mathbb{V}}
\newcommand{\cA}{\mathcal{A}}
\newcommand{\cO}{\mathcal{O}}
\newcommand{\cD}{\mathcal{D}}
\newcommand{\cE}{\mathcal{E}}
\newcommand{\cF}{\mathcal{F}}
\newcommand{\cG}{\mathcal{G}}
\newcommand{\cH}{\mathcal{H}}
\newcommand{\cL}{\mathcal{L}}
\newcommand{\cN}{\mathcal{N}}
\newcommand{\cV}{\mathcal{V}}
\newcommand{\arrow}{\rightarrow}
\newcommand{\rk}{\mathrm{rk}\,}
\newcommand{\Sh}{\mathrm{Sh}}
\newcommand{\Gl}{\mathrm{Gl}\,}
\newcommand{\Sl}{\mathrm{Sl}}
\newcommand{\Is}{\mathrm{Is}}
\newcommand{\Id}{\mathrm{Id}}
\newcommand{\Aut}{\mathrm{Aut}}
\newcommand{\Lie}{\mathrm{Lie}}
\newcommand{\Exc}{\mathrm{Exc}}
\newcommand{\ad}{\mathrm{ad}}
\newcommand{\Ad}{\mathrm{Ad}}
\newcommand{\Proj}{\mathrm{Proj}}
\newcommand{\Sym}{\mathrm{Sym}}
\newcommand{\Res}{\mathrm{Res}}
\newcommand{\sbt}{\,\begin{picture}(-1,1)(-1,-3)\circle*{3}\end{picture}\ }
\title[A strong hyperbolicity property]{A strong hyperbolicity property of locally symmetric varieties}
\author{Yohan Brunebarbe}
\date{\today}
\begin{document}

\maketitle

\begin{abstract}
We show that all subvarieties of a quotient of a bounded symmetric domain by a sufficiently small arithmetic discrete group of automorphisms are of general type. This result corresponds through the Green-Griffiths-Lang's conjecture to a well-known result of Nadel.
\end{abstract}

\section{Introduction}

A complex analytic space $X$ is called hyperbolic in the sense of Brody if there is no non-constant holomorphic map $\bC \arrow X$. Given a projective complex variety $X$, one can measure the deviation from Brody-hyperbolicity of the corresponding analytic space by introducing its exceptional subvariety $\Exc(X) \subset X$, which is by definition the Zariski closure of the union of the images of all non-constant holomorphic maps $\bC \arrow X$. A famous conjecture of Green-Griffiths and Lang says that this subset has an interpretation in the realm of algebraic geometry:

\begin{conj}[Green-Griffiths, Lang, cf. \cite{Green-Griffiths79, Lang86}]\label{Lang_geometric}
Let $X$ be an irreducible projective complex variety. Then $X$ is of general type if and only if $\Exc(X) \neq X$.
\end{conj}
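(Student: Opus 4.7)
This is a major open conjecture, so I cannot offer a complete proof, only the standard two-pronged line of attack together with the known obstructions.

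For the forward implication (general type $\Rightarrow$ $\Exc(X) \neq X$), I would exploit the positivity of $K_X$ via the jet-differential method of Bloch, Green-Griffiths, Siu and Demailly. First, use holomorphic Morse inequalities (or a direct Riemann-Roch estimate) on Demailly's tower of projectivized jet bundles to produce, for $k$ and $m$ sufficiently large, many global sections of $E_{k,m}\Omega_X \otimes A^{-1}$ for some ample line bundle $A$. Then invoke Demailly's fundamental vanishing theorem, which forces every non-constant entire curve $f : \bC \arrow X$ to satisfy the algebraic differential equations defined by such sections. The remaining, decisive step is to show that the common base locus of these equations projects to a proper subvariety of $X$: this is where the hypothesis on $K_X$ must really be used to control the positivity of jet differentials, and it is the heart of the difficulty.

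For the reverse implication ($\Exc(X) \neq X \Rightarrow X$ of general type), I would argue by contradiction using the minimal model program. If $X$ is not of general type, then after a birational modification the MMP produces either a uniruled variety or a non-trivial Iitaka-type fibration whose general fibre has Kodaira dimension zero. In either case, one expects the geometry of the fibres to supply many entire curves, rational curves in the uniruled case, and translates/spreads of entire curves coming from the special geometry (abelian or Calabi-Yau factors) in the second case, whose union is Zariski dense in $X$, contradicting the hypothesis on $\Exc(X)$.

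The main obstacle in both directions is essentially the same in spirit: in the first, one must show that jet differentials are sufficiently abundant and that their base locus is genuinely small; in the second, one must actually \emph{produce} entire curves from structural features of non-general-type varieties. Both problems are open in dimension $\geq 3$. The paper under review, as the abstract indicates, does not attempt the conjecture in full generality, but establishes the reverse implication in the special case of subvarieties of a sufficiently small arithmetic quotient of a bounded symmetric domain, where Nadel's theorem provides precisely the input $\Exc(X) \neq X$, so that the remaining task reduces to showing such subvarieties are of general type.
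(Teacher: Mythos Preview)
The statement you were asked to prove is a \emph{conjecture} in the paper, not a theorem; the paper offers no proof of it, and you correctly recognise this at the outset. There is therefore no ``paper's own proof'' against which to compare your proposal. Your sketch of the jet-differential approach for the forward direction and the MMP/Iitaka-fibration heuristic for the converse is a reasonable survey of the standard folklore, and you are right that both directions remain open in general.

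One point in your last paragraph deserves correction. You say the paper ``establishes the reverse implication in the special case of subvarieties of a sufficiently small arithmetic quotient of a bounded symmetric domain, where Nadel's theorem provides precisely the input $\Exc(X) \neq X$''. This mis-describes the logic. The paper does \emph{not} use Nadel's hyperbolicity result as a hypothesis and then deduce general type from it via the conjecture; rather, it proves general type (indeed, maximal cotangent dimension) for all such subvarieties directly and unconditionally, by Hodge-theoretic means: automorphic $\bC$-PVHS of Calabi--Yau type, Zuo's semi-negativity for kernels of Higgs fields, and Mumford's highly-ramified-tower trick, followed by the Campana--P\u{a}un theorem. The main theorem is thus \emph{parallel} to Nadel's theorem, not a consequence of it: the two results together constitute evidence for conjecture~\ref{Lang_geometric} in this class of varieties, but neither is used to derive the other.
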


Recall that an irreducible smooth projective complex variety $X$ is said of general type if it has enough pluricanonical forms to make the canonical rational maps $X \dasharrow \bP( \HH^0(X, \omega_X ^{\otimes m})^{\vee})$ birational onto their images for $m \gg 1$. In that case, every smooth projective complex variety birational to $X$ is of general type too. An irreducible complex algebraic variety $X$ is said of general type if any smooth projective complex variety birational to $X$ is of general type.\\

Observe that conjecture \ref{Lang_geometric} implies that given a projective complex variety $X$, the irreducible components of its exceptional locus $\Exc(X)$ are not of general type, and that any irreducible subvariety of $X$ not contained in $\Exc(X)$ is of general type.\\

The conjecture \ref{Lang_geometric} is known for some special classes of varieties, including subvarieties of abelian varieties \cite{Bloch26, Ochiai77, Kawamata80}, smooth projective surfaces with a big cotangent bundle \cite{Bogomolov77, McQuillan98} and generic hypersurfaces of high degree \cite{Diverio-Merker-Rousseau10}.\\

In this article, we investigate conjecture \ref{Lang_geometric} for compactifications of arithmetic locally symmetric varieties and their subvarieties. Recall that an arithmetic locally symmetric variety is by definition a complex analytic space which is isomorphic to a quotient of a bounded symmetric domain $\cD$ by a torsion-free arithmetic lattice $\Gamma \subset \Aut(\cD)$, see section \ref{reminder_Arithmetic locally symmetric varieties} for a reminder. By a theorem of Baily-Borel \cite{Baily-Borel}, every arithmetic locally symmetric variety admits a canonical (and in fact unique) structure of smooth quasi-projective variety. The exceptional subvariety of their compactifications has been studied by Nadel (see also \cite{Noguchi91, Hwang-To06, Rousseau13}):

\begin{theorem}[Nadel, \cite{Nadel}] Let $\cD$ be a bounded symmetric domain and $\Gamma \subset \Aut(\cD)$ an arithmetic lattice. Then there exists a subgroup $\Gamma' \subset \Gamma$ of finite index such that the image $f(\bC)$ of any non-constant holomorphic map $f : \bC \arrow \overline{\Gamma' \backslash \cD}$, where $\overline{\Gamma' \backslash \cD}$ is any compactification of $\Gamma' \backslash \cD$, is contained in $\overline{\Gamma' \backslash \cD} - \Gamma' \backslash \cD$.
\end{theorem}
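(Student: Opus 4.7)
The plan is to reduce to a smooth toroidal compactification, use the Bergman metric to get Brody hyperbolicity on the interior, and then exploit the smallness of $\Gamma'$ to construct symmetric differentials vanishing on the boundary that rule out non-constant entire curves crossing into the interior.

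By Borel's theorem, I may replace $\Gamma$ by a neat finite-index subgroup, and then Ash--Mumford--Rapoport--Tai provides a smooth toroidal compactification $X = \overline{\Gamma' \backslash \cD}$ with simple normal crossings boundary $D$. I would prove the theorem for $X$ directly, and extend it to an arbitrary compactification $Y$ by noting that the boundary construction below also yields a hyperbolic embedding of $\Gamma' \backslash \cD$ into $X$ in Kobayashi's sense, which via Kwack's big Picard theorem allows one to lift any entire curve in $Y$ meeting the interior to an entire curve in $X$ meeting the interior. The Bergman metric on $\cD$ descends to a complete K\"ahler metric on $\Gamma' \backslash \cD$ whose holomorphic sectional curvature is bounded above by a negative constant, so Ahlfors--Schwarz gives immediately that every holomorphic map $\bC \to \Gamma' \backslash \cD$ is constant. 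Hence only entire curves crossing between the interior and $D$ need to be excluded.

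To rule these out, I would construct, for $\Gamma'$ of sufficient depth, non-zero global sections of $\Sym^m \Omega^1_X \otimes \OO_X(-D)$ for some $m \geq 1$, i.e.\ symmetric $m$-differentials on $X$ vanishing along $D$. Their existence would come from Mumford's extension of the Bergman metric to a good Hermitian metric on $\Omega^1_X(\log D)$, combined with an asymptotic Riemann--Roch estimate: the cusps of $\Gamma' \backslash \cD$ are modeled on quotients by lattices $N_{\Gamma'}$ in the unipotent radicals of $\bQ$-parabolic subgroups, and as $\Gamma'$ shrinks these lattices become correspondingly sparser, which improves the positivity of the relevant metric. The fundamental vanishing theorem for jet differentials (Bogomolov--Siu--Demailly) then forces $f^*\omega \equiv 0$ for each such $\omega$ and every non-constant entire curve $f\colon \bC \to X$, and enough linearly independent sections constrain $f(\bC)$ to lie in $D$.

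The main obstacle is precisely this construction: quantifying what ``$\Gamma'$ sufficiently small'' means in terms of the arithmetic data defining $\Gamma$, and producing sufficiently many symmetric differentials vanishing on $D$ to handle all non-constant entire curves. This rests on a delicate asymptotic analysis of the Bergman kernel near each toroidal cusp, together with the combinatorial geometry of the cone decomposition, and is the technical heart of Nadel's argument.
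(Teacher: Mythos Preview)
The paper does not contain a proof of this statement. Nadel's theorem is quoted from \cite{Nadel} purely as context and motivation: the paper's contribution is Theorem~\ref{main_theorem}, the algebro-geometric analogue of Nadel's result predicted by the Green--Griffiths--Lang conjecture, and it is that theorem (via Theorem~\ref{precise_main_result}) whose proof occupies Sections~\ref{Calabi-Yau PVHS}--\ref{Proof of the main theorem}. So there is nothing in the paper to compare your attempt against.

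As a separate remark on your sketch: the first two steps (passing to a neat subgroup with a smooth toroidal compactification, and invoking the Bergman metric plus Ahlfors--Schwarz for hyperbolicity of the open part) are standard and fine. The third step, however, is where your outline becomes vague in exactly the place where the real content lies, as you yourself acknowledge. Nadel's own argument does not proceed by producing global sections of $\Sym^m\Omega^1_X\otimes\OO_X(-D)$ via Riemann--Roch asymptotics; rather, he constructs a singular Finsler-type metric on the (log) tangent bundle with suitable negative curvature that degenerates along $D$ in a controlled way as the level deepens, and then applies an Ahlfors--Schwarz argument directly. Your proposed route through symmetric differentials and the fundamental vanishing theorem is in principle a valid alternative strategy, but turning it into an actual proof would require exactly the quantitative positivity estimate you flag as ``the main obstacle,'' and that is not something one can wave through.
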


In other words, the exceptional subvariety of any compactification $\overline{\Gamma' \backslash \cD}$ is included in $\overline{\Gamma' \backslash \cD} - \Gamma' \backslash \cD$. Our main result is the corresponding statement predicted by conjecture \ref{Lang_geometric}:

\begin{theorem}[Main result]\label{main_theorem} Let $\cD$ be a bounded symmetric domain and $\Gamma \subset \Aut(\cD)$ an arithmetic lattice. Then there exists a subgroup $\Gamma' \subset \Gamma$ of finite index such that all subvarieties of $\Gamma' \backslash \cD$ are of general type.
\end{theorem}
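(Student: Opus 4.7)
Given an irreducible subvariety $Y \subset X := \Gamma' \backslash \cD$, I would fix a resolution of singularities $\widetilde{Y} \to Y$ together with a smooth projective compactification $\overline{Y} \supset \widetilde{Y}$ whose boundary $D := \overline{Y} \setminus \widetilde{Y}$ is a simple normal crossings divisor. By definition, it suffices to prove that the canonical bundle $K_{\overline{Y}}$ is big.

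\textbf{The tautological variation of Hodge structure.} The bounded symmetric domain $\cD$ embeds as an open subset of its compact dual (Borel embedding), realizing it as a period domain for polarized Hodge structures. A faithful representation of $\Aut(\cD)^0$ of appropriate Hodge type produces a canonical polarized $\bR$-VHS $\bV$ on $\cD$ whose period map is the identity. After replacing $\Gamma'$ by a finite-index neat arithmetic subgroup, $\bV$ descends to $X$ with unipotent local monodromy around the boundary divisor of any fixed toroidal compactification of $X$.

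\textbf{Pulling back and bigness of the log canonical bundle.} The pullback of $\bV$ along $\widetilde{Y} \to Y \hookrightarrow X$ is a polarized VHS on $\widetilde{Y}$ whose period map is generically immersive -- the composition of the immersion $Y \hookrightarrow X$ with the identity period map of $X$. After shrinking $\Gamma'$ further if necessary, the local monodromies around $D$ are unipotent, so the Hodge bundles extend to $\overline{Y}$ via the Deligne canonical extension. Griffiths transversality provides an $\cO_{\overline{Y}}$-linear morphism
\[
T_{\overline{Y}}(-\log D) \longrightarrow \bigoplus_p \Hom \! \left( F^p/F^{p+1},\, F^{p-1}/F^p \right),
\]
which is injective at the generic point by immersivity. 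Combined with the well-known curvature negativity of the graded pieces of the Hodge filtration (in the spirit of Griffiths, Zuo, Viehweg--Zuo, and the author's earlier work on positivity of Hodge bundles), this yields a big subsheaf inside $\Omega^1_{\overline{Y}}(\log D)$; in particular $K_{\overline{Y}} + D$ is big, so $Y$ is of \emph{log} general type.

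\textbf{Main obstacle: removing the logarithm.} The decisive remaining step is to upgrade bigness of $K_{\overline{Y}} + D$ to bigness of $K_{\overline{Y}}$ itself, and it is here that the locally symmetric structure -- rather than the mere presence of a VHS -- must be exploited. The idea is to show that the Bergman metric on $\cD$, after descent to $X$ and pullback to $\overline{Y}$, induces a singular hermitian metric on $K_{\overline{Y}}$ that is good in the sense of Mumford, whose curvature current is strictly positive on a nonempty Zariski open subset of $\overline{Y}$, and whose singularities near $D$ are mild enough that no positive contribution from $D$ is needed. Equivalently, one needs to promote the Hodge-theoretic construction of sections of $(K_{\overline{Y}} + D)^{\otimes m}$ to a construction of sections of $K_{\overline{Y}}^{\otimes m}$ by using the precise asymptotic behavior (nilpotent orbit theorem, Cattani--Kaplan--Schmid) of the relevant Hodge bundles along the toroidal boundary of $X$, as they restrict to the possibly singular boundary of $Y$. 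Controlling this asymptotic behavior along $D$ -- especially along components that map into deeper boundary strata of $X$ -- is the technical heart of the proof and the principal obstacle.
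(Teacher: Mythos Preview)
Your outline has the right shape, but three of its steps diverge from the paper, and the one you flag as the main obstacle is resolved by a different mechanism than you propose. First, the paper does not use the tautological/adjoint VHS but the irreducible automorphic $\bC$-PVHS of \emph{Calabi--Yau type} of weight $w=l\cdot\rk\cD$ (Gross, Sheng--Zuo, Friedman--Laza), whose top Hodge piece is the line bundle $\cL^{\otimes l}$. Iterating the Higgs field and applying Zuo's weak negativity to its image yields, for any generically finite $f:(\overline{Y},E)\to(\overline{X},D)$ and any $l,m$ with $\cL^{\otimes l}(-mD)$ big on $X$, a Viehweg-big subsheaf of $S^{w}\Omega^1_{\overline{Y}}(\log E)\otimes f^*\cO_{\overline{X}}(-mD)$. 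Note that the target is a symmetric power of the cotangent bundle, not $K_{\overline{Y}}+D$; your implication ``big subsheaf of $\Omega^1(\log D)\Rightarrow K+D$ big'' is not valid in general and is not what the paper proves.

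Second, the log is removed not by metric asymptotics but by Mumford's \emph{highly ramified tower}: for every $N$ there is a congruence subgroup $\Gamma'\subset\Gamma$ such that the map between toroidal compactifications ramifies to order $\geq N$ along every boundary component. Given $Z\subset\Gamma'\backslash\cD$, one first runs the construction above on a log-resolution $(\overline{Y},E)$ of the image of $Z$ in $\Gamma\backslash\cD$; pulling the resulting Viehweg-big subsheaf back along the generically finite map from a log-compactification $(\overline{Y}',E')$ of $Z$, the high ramification forces it to land inside $S^{w}\Omega^1_{\overline{Y}'}$ with no log poles. Hence $\Omega^1_{\overline{Y}'}$ is big, i.e.\ $Z$ has maximal cotangent dimension, and this is precisely where the passage to $\Gamma'$ is used. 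The final step from ``maximal cotangent dimension'' to ``general type'' is a separate deep input (Campana--Peternell, Campana--P\u{a}un) which your proposal also omits. No Bergman metrics, nilpotent-orbit asymptotics, or CKS estimates are used anywhere.
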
  

\begin{rems}
\begin{enumerate}
\item This is in fact a consequence of a stronger statement, see theorem \ref{precise_main_result}. 
\item That such a $\Gamma' \backslash \cD$ is of general type was already known of Tai \cite{AMRT} and Mumford \cite{Mumford77}, and our proof of theorem \ref{main_theorem} build partly on Mumford's ideas. Also, Brylinski \cite{Brylinski79} has shown that the moduli space of curves with a sufficiently high dihedral level structure is of general type. A stronger result follows from the theorem \ref{main_theorem_for_Ag} below and Torelli's theorem.
\end{enumerate}
\end{rems}

As a direct application of Borel's algebraization theorem \cite[theorem 3.10]{Borel72}, one obtains:

\begin{cor}
Let $\cD$ be a bounded symmetric domain and $\Gamma \subset \Aut(\cD)$ an arithmetic lattice. Then there exists a subgroup $\Gamma' \subset \Gamma$ of finite index such that any (not necessarily compact) complex algebraic variety endowed with a generically immersive holomorphic map to $\Gamma' \backslash \cD$ is of general type.
\end{cor}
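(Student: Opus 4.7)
The plan is to combine theorem \ref{main_theorem} with Borel's algebraization theorem \cite[theorem 3.10]{Borel72}. I would choose $\Gamma' \subset \Gamma$ to be a finite-index subgroup produced by theorem \ref{main_theorem}, so that every irreducible closed subvariety of $\Gamma' \backslash \cD$ is of general type. Given then a complex algebraic variety $X$ and a generically immersive holomorphic map $f : X^{\mathrm{an}} \arrow (\Gamma' \backslash \cD)^{\mathrm{an}}$, Borel's theorem upgrades $f$ to a morphism of algebraic varieties.

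Next, I would restrict attention to an irreducible component $X_0$ of $X$ and consider the Zariski closure $Y := \overline{f(X_0)}$ inside $\Gamma' \backslash \cD$. This $Y$ is an irreducible algebraic subvariety of $\Gamma' \backslash \cD$, and by our choice of $\Gamma'$ it is of general type. Since $f$ is generically immersive, its differential is injective at the generic point of $X_0$, so $\dim Y = \dim X_0$ and the induced morphism $X_0 \arrow Y$ is generically finite and dominant.

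The desired conclusion then reduces to the classical fact that if $\pi : X' \arrow Y'$ is a generically finite dominant morphism of irreducible varieties and $Y'$ is of general type, then so is $X'$: after passing to smooth projective models and resolving indeterminacies one has $K_{X'} = \pi^* K_{Y'} + R$ with $R$ effective, so bigness of the canonical bundle transfers from $Y'$ to $X'$. Applying this componentwise to $X$ yields the statement.

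I do not expect a genuine obstacle in this deduction; all the real content lies in theorem \ref{main_theorem}. The only points worth checking are the applicability of Borel's algebraization theorem (guaranteed by the hypothesis that $X$ is algebraic) and the standard invariance of general type under generically finite covers.
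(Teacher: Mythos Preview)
Your proposal is correct and matches the paper's approach: the paper simply states that the corollary is ``a direct application of Borel's algebraization theorem'' to theorem \ref{main_theorem}, without spelling out the details, and your argument is precisely the standard elaboration of that sentence (algebraize $f$, take the Zariski closure of the image, and use that general type ascends along generically finite dominant maps).
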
 

Note that in general it is necessary to take a finite index subgroup as shown by the example $ \Sl(2, \bZ) \backslash \Delta \simeq \bC$. Nonetheless, by \cite[theorem 3.1]{BKT} and its proof, a \textit{compact} complex analytic variety endowed with a generically immersive holomorphic map to a quotient of a bounded symmetric domain by a torsion-free discrete group of automorphisms is of general type.\\

In the special case where $\cD$ is Harish-Chandra's realization of the Siegel half-space of rank $g$, we prove the more precise statement (cf. section \ref{case_of_Ag}):

\begin{theorem}\label{main_theorem_for_Ag} 
For any $g \geq 1$ and any $n > 12 \cdot g$, every subvariety of $\cA_g(n)$ is of general type. 
\end{theorem}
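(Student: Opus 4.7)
The plan is to execute the strategy underlying Theorem~\ref{main_theorem} with all constants tracked explicitly in the Siegel setting. Fix $n \geq 3$, so that $\Gamma_g(n)$ is neat, and choose a smooth projective toroidal compactification $\bar{\cA}_g(n)$ with simple normal crossings boundary $D$. Let $\bar{\cE}$ be the canonical (Deligne) extension of the Hodge bundle across $D$, and write $\bar{\lambda} = \det \bar{\cE}$; this line bundle is the pullback of an ample line bundle on the Baily--Borel compactification, hence nef on $\bar{\cA}_g(n)$. The key algebro-geometric input is the logarithmic form of the Kodaira--Spencer isomorphism,
\[
\Omega^1_{\bar{\cA}_g(n)}(\log D) \;\simeq\; \Sym^2 \bar{\cE},
\]
whose determinant yields Mumford's formula $K_{\bar{\cA}_g(n)} + D = (g+1)\bar{\lambda}$.

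Given an irreducible subvariety $\bar{Z} \subset \bar{\cA}_g(n)$ meeting $\cA_g(n)$, let $\tilde{Z}$ be a log-resolution with reduced boundary $D_Z \subset \tilde{Z}$. The natural restriction-and-quotient map produces a surjection $\Sym^2 \bar{\cE}|_{\tilde{Z}} \to \Omega^1_{\tilde{Z}}(\log D_Z)$. The dual $\bar{\cE}^\vee$ carries a Hermitian metric, induced by the Bergman metric on $\cD$, of semi-negative curvature with only logarithmic singularity along $D$, so $\bar{\cE}$ is nef and this positivity propagates through the surjection: $\Sym^m \Omega^1_{\tilde{Z}}(\log D_Z)$ has many global sections for $m \gg 0$, implying that $K_{\tilde{Z}} + D_Z$ is big.

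To pass from bigness of $K_{\tilde{Z}} + D_Z$ to bigness of $K_{\tilde{Z}}$ itself, one needs a divisorial bound on $D_Z$. On $\bar{\cA}_g(n)$ the corresponding bound is $D \leq \tfrac{12g}{n}\bar{\lambda}$, which restricted to $\tilde{Z}$ and combined with Mumford's formula yields bigness of $K_{\tilde{Z}}$ as soon as $n > 12g$. The constant $12$ is already visible for $g=1$: the weight-$12$ cusp form $\Delta$ satisfies $\mathrm{div}(\Delta) = [\infty] = \bar{\lambda}^{12}$ on the orbifold $\overline{\cA}_1(1)$, and level-$n$ ramification converts this to $D \leq \tfrac{12}{n}\bar{\lambda}$ on $X(n)$. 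The extra factor of $g$ in general reflects the depth of the toroidal stratification: a subvariety may meet a codimension-$g$ stratum where all $g$ independent degenerating directions contribute simultaneously to the boundary.

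The main obstacle is this final numerical estimate. It reduces to a cohomological control analogous to producing a Siegel cusp form of weight $12g$ on $\cA_g(1)$ vanishing to order at least one on every boundary divisor of a smooth toroidal compactification; equivalently, a uniform upper bound on the nilpotent orbits attached to the various parabolic subgroups of $\mathrm{Sp}_{2g}$ in terms of the Petersson growth of Siegel modular forms. Getting the constant $12g$ sharp (rather than one depending on the chosen toroidal fan) requires a careful stratum-by-stratum analysis, which is the technical heart of the argument.
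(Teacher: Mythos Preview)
Your outline has two genuine gaps, and the role of the constant $g$ is misplaced.

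\textbf{Bigness of $K_{\tilde Z}+D_Z$.} From the surjection $\Sym^2\bar\cE|_{\tilde Z}\twoheadrightarrow\Omega^1_{\tilde Z}(\log D_Z)$ and nefness of $\bar\cE$ you only obtain that $\Omega^1_{\tilde Z}(\log D_Z)$, and hence $K_{\tilde Z}+D_Z$, is \emph{nef}. Nef bundles need not have any nonzero sections, and quotients of Hartshorne-big bundles need not be big (take $\cO(1)\oplus\cO(-1)\twoheadrightarrow\cO(-1)$ on $\bP^1$), so the sentence ``$\Sym^m\Omega^1_{\tilde Z}(\log D_Z)$ has many global sections'' is unjustified. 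The paper does not try to prove $K_{\tilde Z}+D_Z$ big at all. Instead it takes the irreducible $\bC$-PVHS of Calabi--Yau type sitting inside $\Lambda^g\bV$, iterates its Higgs field, and uses Zuo's weak negativity (Corollary~\ref{weakly negativity of kernels}) to produce a \emph{Viehweg-big} subsheaf of $S^{l g}\Omega^1_{\overline Y}(\log E)\otimes f^*\cO_{\overline X}(-mD)$ for any generically finite $f:(\overline Y,E)\to(\overline X,D)$ (Proposition~\ref{main_construction}). This is a positivity statement about a symmetric power of the log cotangent twisted by $\cO(-mD)$, and is what survives restriction to subvarieties.

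\textbf{Subtracting the boundary.} Even if $K_{\tilde Z}+D_Z$ were big, Mumford's formula $K+D=(g+1)\bar\lambda$ lives on $\overline{\cA_g(n)}$, not on $\tilde Z$; there is no identity expressing $K_{\tilde Z}+D_Z$ as a multiple of $\bar\lambda|_{\tilde Z}$, so a bound $D_Z\le\tfrac{12g}{n}\bar\lambda|_{\tilde Z}$ (which you also do not prove, and which is problematic since $D_Z$ contains exceptional divisors of the resolution) cannot be subtracted to conclude that $K_{\tilde Z}$ is big.

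\textbf{Where the factor $g$ really comes from.} Weissauer's theorem gives, at level one, that $\cL^{\otimes l}(-mD)$ is big on $\overline{\cA_g}$ whenever $l>12m$; the constant governing $D$ versus $\cL$ is $12$, not $12g$. The factor $g=r_\cD$ enters through Proposition~\ref{main_construction}: the Calabi--Yau VHS has weight $l\cdot g$, so the Viehweg-big subsheaf sits in $S^{l g}\Omega^1(\log E)\otimes\cO(-mD)$. To pass from log forms to genuine forms via Theorem~\ref{Mumford's trick} one needs ramification index at least $lg/m>12g$ along the boundary, and Nadel's lemma provides ramification $\ge n$ for $\overline{\cA_g(n)}\to\overline{\cA_g}$; hence $n>12g$. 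The paper then concludes that $\Omega^1_{\overline Y'}$ is big and invokes Campana--P\u{a}un (Theorem~\ref{Campana-Paun}) to get general type, never arguing with $K_{\tilde Z}$ directly.
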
  
Here $\cA_g(n)$ denotes the moduli stack of principally polarized abelian varieties of dimension $g$ with a level-$n$ structure, which is a smooth quasi-projective variety for $n \geq 3$. By Torelli's theorem, theorem \ref{main_theorem_for_Ag} implies an analogous statement for the moduli stack of curves with level structures.\\

Our main result has some (partly conjectural) implications for Shimura varieties that we now briefly discuss. Let $(\mathbf{G}, X)$ be a Shimura datum: $\mathbf{G}$ is a reductive $\bQ$-group and $X$ a $\mathbf{G}(\bR)$-conjugation class of morphisms $\mathbf{S} = \Res_{\bC / \bR} \mathbf{G}_m \arrow \mathbf{G}_{\bR}$ satisfying Deligne's axioms, cf. \cite{Deligne71, Deligne77}. If $K$ is a compact-open subgroup of $\mathbf{G}(\bA_f)$, one has the corresponding Shimura variety:
\[ \Sh_K(\mathbf{G}, X)_{\bC} = \mathbf{G}(\bQ) \backslash (X \times \mathbf{G}(\bA_f)/K), \] 
which is a finite disjoint union of arithmetic locally symmetric varieties when $K$ is net. Its Baily-Borel compactification $\overline{\Sh_K(\mathbf{G}, X)_{\bC}}^{BB}$ is a normal projective complex variety which contains $\Sh_K(\mathbf{G}, X)_{\bC}$ as a Zariski-open dense subset. The boundary $\overline{\Sh_K(\mathbf{G}, X)_{\bC}}^{BB} - \Sh_K(\mathbf{G}, X)_{\bC}$ is stratified by Shimura subvarieties, and by an iterative application of Theorem \ref{main_theorem}, one easily gets

\begin{cor}\label{main_theorem_for_BB} 
Let $(\mathbf{G}, X)$ be a Shimura datum and $K$ be a compact-open subgroup of $\mathbf{G}(\bA_f)$. There exists a compact-open subgroup $K' \subset K \subset \mathbf{G}(\bA_f)$ such that any subvarieties of $\overline{\Sh_{K'}(\mathbf{G}, X)_{\bC}}^{BB}$ is of general type.  
\end{cor}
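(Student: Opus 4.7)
The plan is to argue by induction on $\dim X$, exploiting the well-known stratification of the Baily-Borel compactification. Recall that $\overline{\Sh_K(\mathbf{G}, X)_{\bC}}^{BB}$ carries a finite stratification indexed by $\mathbf{G}(\bQ)$-conjugacy classes of admissible rational parabolic subgroups $P$ of $\mathbf{G}$: the open stratum is $\Sh_K(\mathbf{G}, X)_{\bC}$ itself, and each boundary stratum is a finite disjoint union of connected components of a Shimura variety $\Sh_{K_P}(\mathbf{G}_P, X_P)_{\bC}$ attached to a boundary Shimura datum $(\mathbf{G}_P, X_P)$ with $\dim X_P < \dim X$, where $K_P \subset \mathbf{G}_P(\bA_f)$ is canonically induced from $K$. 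Moreover, the Zariski closure of such a stratum inside $\overline{\Sh_K(\mathbf{G}, X)_{\bC}}^{BB}$ is precisely the Baily-Borel compactification $\overline{\Sh_{K_P}(\mathbf{G}_P, X_P)_{\bC}}^{BB}$. The base case $\dim X = 0$ is trivial.

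For the inductive step, I would first apply Theorem \ref{main_theorem} to each connected component of the open stratum to obtain a compact-open subgroup $K_0 \subset K$ such that every subvariety of $\Sh_{K_0}(\mathbf{G}, X)_{\bC}$ is of general type. Next, apply the inductive hypothesis to each of the finitely many boundary Shimura data $(\mathbf{G}_P, X_P)$ to obtain compact-open subgroups $K_P^{\ast} \subset \mathbf{G}_P(\bA_f)$ such that every subvariety of $\overline{\Sh_{K_P^{\ast}}(\mathbf{G}_P, X_P)_{\bC}}^{BB}$ is of general type. Then choose a compact-open subgroup $K' \subset K_0$ that is small enough so that, for every $P$ in the finite set of conjugacy classes of boundary parabolics, the induced compact-open subgroup $K_P'$ of $\mathbf{G}_P(\bA_f)$ is contained in $K_P^{\ast}$; this is possible because the induced level maps $K \mapsto K_P$ are open for the profinite topology, and only finitely many $P$ need to be considered.

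To conclude, let $Z$ be an irreducible subvariety of $\overline{\Sh_{K'}(\mathbf{G}, X)_{\bC}}^{BB}$ and let $S$ be the unique minimal stratum such that $Z \subset \overline{S}^{BB}$; then $Z \cap S$ is Zariski-open and dense in $Z$, so general type for $Z$ is equivalent to general type for $Z \cap S$ regarded as a quasi-projective subvariety of $S$. If $S$ is the open stratum, then $Z$ is of general type by Theorem \ref{main_theorem} applied through the inclusion $K' \subset K_0$; otherwise $\overline{S}^{BB}$ is the Baily-Borel compactification of a boundary Shimura variety whose induced level structure refines the corresponding $K_P^{\ast}$, and $Z$ is of general type by the inductive hypothesis. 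The principal technical point, and essentially the only one requiring care, is the simultaneous refinement of level structures across all boundary strata; this reduces to a routine verification about open subgroups of the adelic groups associated to admissible rational parabolics and their Levi quotients, following the standard description of the Baily-Borel stratification (cf. \cite{Baily-Borel, AMRT}).
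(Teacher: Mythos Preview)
Your proposal is correct and follows essentially the same approach as the paper: the paper merely remarks that the boundary of the Baily--Borel compactification is stratified by Shimura subvarieties and that the corollary follows ``by an iterative application of Theorem~\ref{main_theorem}'', which is precisely the induction on $\dim X$ that you spell out. The only point you leave implicit---that the property ``every subvariety is of general type'' persists when passing to a smaller compact-open subgroup, via the fact that a generically finite cover of a variety of general type is again of general type---is standard and is also taken for granted in the paper.
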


Following Lang, the exceptional subvariety should also have an arithmetic significance:

\begin{conj}[Bombieri, Lang]
Let $X$ be a projective complex variety. For any field $F$ finitely generated over $\bQ$ with an embedding $\sigma : F \arrow \bC$ and any variety $X_F$ defined over $F$ such that $X_F \otimes_{\sigma} \bC = X$, the set of $F$-rational points of $X$ lying outside of $\Exc(X)$ is finite.  
\end{conj}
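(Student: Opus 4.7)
The plan would be to attack the Bombieri-Lang conjecture, one of the deepest open problems in Diophantine geometry. Since no unconditional approach is known, I can only outline the philosophy that has been developed and indicate how it might interact with the geometric results of this paper.

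The standard strategy is Vojta's method: model the pair $(X_F, \omega_{X_F})$ over the ring of integers $\OO_F$, equip it with adelic hermitian metrics, and apply an arithmetic Riemann-Roch theorem to produce global sections of $\omega_{X_F}^{\otimes m}$ with controlled sup-norms. If a rational point $x \in X_F(F)$ has very large Weil height with respect to $\omega_X$, specialising such a section at $x$ should produce a contradiction, provided $\omega_X$ is sufficiently positive. Bigness of $\omega_X$ --- i.e.\ that $X$ be of general type --- is the minimum geometric input required; in the locally symmetric setting this is precisely what Theorem \ref{main_theorem} and Corollary \ref{main_theorem_for_BB} guarantee, not only for $X$ but for all its subvarieties.

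Concretely, I would attempt a noetherian induction on $F$-subvarieties. For each $F$-subvariety $Y \subseteq X$ not contained in $\Exc(X)$, apply Vojta's machine to $Y$ to conclude either that $Y(F)$ is finite or that $Y(F)$ accumulates on a proper $F$-subvariety $Y' \subsetneq Y$. By the assumed Green-Griffiths-Lang dichotomy one expects such an accumulation locus to lie in $\Exc(Y)$, and since every proper subvariety is itself of general type (by the strong form of the main theorem after passing to a sufficiently small level), the induction continues and terminates. This would yield the conjecture for Baily-Borel compactifications of sufficiently deep level Shimura varieties.

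The main obstacle is overwhelming: Vojta's method has only been made fully effective for subvarieties of abelian varieties, leading to Faltings' proof of the Mordell-Lang conjecture, and nobody knows how to extract a usable height inequality from bigness of $\omega_X$ in the general projective case. Without a genuine new input on the arithmetic side, the general-type hypothesis alone does not suffice --- this is exactly why Bombieri-Lang remains open. What this paper would contribute to such an attack is the geometric ingredient (all subvarieties are of general type), which is a necessary but very far from sufficient condition for any hoped-for proof.
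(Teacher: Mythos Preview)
The statement you are addressing is labelled as a \emph{conjecture} in the paper, not a theorem: the paper does not prove it and offers no argument for it whatsoever. It is quoted as the well-known Bombieri--Lang conjecture, an open problem, and is used only heuristically to motivate the subsequent conjectures about rational points on Shimura varieties (which are obtained by \emph{assuming} Bombieri--Lang and combining it with the paper's geometric results). So there is no ``paper's own proof'' to compare your attempt against.

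Your proposal is honest about this: you correctly identify the statement as a deep open problem, sketch the Vojta-type philosophy, and explicitly acknowledge that no unconditional approach exists. That assessment is accurate. But framing it as a ``proof proposal'' is a category error --- there is nothing to prove here within the scope of the paper, and the inductive scheme you describe (Vojta's machine applied to each subvariety, descending along accumulation loci) is itself conditional on height inequalities that nobody knows how to establish outside the abelian-variety setting. The correct response to this item is simply to note that it is stated without proof as a conjecture.
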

It is not difficult to verify that if the conjecture \ref{Lang_geometric} is true, then $\Exc(X)$ is defined over any field of definition of $X$. 
By the work of Shimura, Deligne, Milne, Shih and Borovoi, $Sh_K(\mathbf{G}, X)_{\bC}$ admits a canonical model over the reflex field $E(\mathbf{G}, X)$ associated to the Shimura datum $(\mathbf{G}, X)$. Moreover, its Baily-Borel compactification $\overline{\Sh_K(\mathbf{G}, X)}^{BB}$ is defined over the same field. Therefore, as a consequence of Bombieri-Lang conjecture, one obtains the following

\begin{conj}
Let $(\mathbf{G}, X)$ be a Shimura datum and $K$ be a compact-open subgroup of $\mathbf{G}(\bA_f)$. There exists a compact-open subgroup $K' \subset K \subset \mathbf{G}(\bA_f)$ such that for any finitely generated extension $F$ of $E(\mathbf{G}, X)$, the set $\overline{\Sh_{K'}(\mathbf{G}, X)}^{BB}(F)$ of $F$-points of $\overline{\Sh_{K'}(\mathbf{G}, X)}^{BB}$ is finite. 
\end{conj}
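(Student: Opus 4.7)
The plan is to deduce this conjecture from the Bombieri--Lang conjecture (stated earlier in the paper), using Corollary \ref{main_theorem_for_BB} to guarantee that its hypothesis is always satisfied, together with Conjecture \ref{Lang_geometric} to descend exceptional loci to the correct field of definition. First, I apply Corollary \ref{main_theorem_for_BB} to obtain a compact-open subgroup $K' \subset K$ such that every irreducible closed analytic subvariety of $X_0 := \overline{\Sh_{K'}(\mathbf{G}, X)_{\bC}}^{BB}$ is of general type. By the canonical-model theory of Shimura, Deligne, Milne, Shih and Borovoi recalled in the paper, $X_0$ admits a canonical model over the reflex field $E(\mathbf{G}, X)$, which I continue to denote $X_0$.

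Fix a finitely generated extension $F / E(\mathbf{G}, X)$ with an embedding $\sigma : F \hookrightarrow \bC$. I would prove by Noetherian induction that $V(F)$ is finite for every closed $F$-subvariety $V \subseteq X_{0, F}$; applied to $V = X_{0, F}$ this gives the conjecture. The case $\dim V = 0$ is immediate. For the inductive step, the complex subvariety $V_{\bC} \subset X_0$ is of general type by the choice of $K'$, hence so is $V$. By Conjecture \ref{Lang_geometric}, $\Exc(V_{\bC}) \subsetneq V_{\bC}$ is a proper closed subvariety; by the descent remark following the statement of Bombieri--Lang, $\Exc(V_{\bC})$ is the base change of a proper closed subvariety $W \subsetneq V$ defined over $F$. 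The Bombieri--Lang conjecture applied to $V$ gives $V(F) \setminus W(F)$ finite, while the induction hypothesis applied to the irreducible components of $W$ gives $W(F)$ finite. Hence $V(F)$ is finite.

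The main structural point --- really the only non-routine feature of the proof --- is that the argument invokes both the Bombieri--Lang and Green--Griffiths--Lang conjectures in an essential way, so the resulting statement is intrinsically conditional. The role of the paper's own results is purely to supply the general-type hypothesis uniformly for every subvariety of $X_0$, so that the Noetherian induction propagates cleanly without ever bottoming out in a subvariety to which Bombieri--Lang would provide no content.
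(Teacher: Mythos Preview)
Your derivation is correct and faithfully expands what the paper only gestures at: the paper does not give a proof of this statement (it is labelled a conjecture) but merely asserts, in one sentence, that it is ``a consequence of Bombieri--Lang conjecture'' once one knows that the Baily--Borel compactification is defined over the reflex field. Your Noetherian induction, together with the explicit invocation of Conjecture~\ref{Lang_geometric} for the descent of $\Exc$, is a legitimate way to make that sentence rigorous.

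A slightly shorter route, arguably closer to what the paper has in mind given the remark immediately following Conjecture~\ref{Lang_geometric}, is to avoid the induction entirely: since by Corollary~\ref{main_theorem_for_BB} every positive-dimensional subvariety of $X_0$ is of general type, and Conjecture~\ref{Lang_geometric} forces the irreducible components of $\Exc(X_0)$ \emph{not} to be of general type, one concludes directly that $\Exc(X_0)$ is a finite set of points; a single application of Bombieri--Lang to $X_0$ then finishes. Your approach trades this observation for the descent remark plus induction; both use exactly the same conjectural inputs, and neither is materially simpler than the other.
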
 
 
See \cite{Ullmo-Yafaev10} for a partial result in direction of this conjecture.\\

In the case of $\cA_g$, in view of our Theorem \ref{main_theorem_for_Ag}, the Bombieri-Lang conjecture would have the following consequence (compare with \cite{AMV16, AV16}):

\begin{conj}
For any $g \geq 1$, any $n > 12 \cdot g$ and any field $F$ finitely generated over $\bQ$, there is only a finite number of isomorphism classes of principally polarized abelian varieties defined over $F$ of dimension $g$ with a level-$n$ structure.   
\end{conj}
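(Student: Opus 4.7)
This conjecture is asserted as a consequence of Bombieri-Lang, so I would assume that conjecture (and implicitly Conjecture~\ref{Lang_geometric}, which is needed to translate ``of general type'' into ``exceptional locus is proper'') and deduce the statement by combining them with Theorem~\ref{main_theorem_for_Ag}. Since $n > 12g$ forces $n \geq 13 > 3$, the stack $\cA_g(n)$ is a fine moduli scheme and its $F$-points correspond bijectively to the isomorphism classes to be counted; replacing $F$ by a finite extension (which only enlarges the set we want to bound) I may assume that $\cA_g(n)$ and its Baily-Borel compactification $X := \overline{\cA_g(n)}^{BB}$ are defined over $F$, and it suffices to prove $X(F)$ is finite.

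The first step is to show by induction on $g$ that every closed irreducible subvariety $Y \subseteq X$ is of general type. When $g = 1$, $X$ is a smooth projective modular curve of genus $\geq 2$ (since $n \geq 13$), and any closed subvariety is either the whole curve or a point, so the statement is trivial. For $g \geq 2$, either $Y$ meets the interior $\cA_g(n)$, in which case $Y$ is the Zariski closure of $Y \cap \cA_g(n)$ and is of general type by Theorem~\ref{main_theorem_for_Ag}, or $Y$ lies in a boundary stratum. The Baily-Borel boundary of $\cA_g(n)$ is stratified by Baily-Borel compactifications attached to Siegel upper half-spaces of rank $g' < g$ equipped with an induced congruence-level structure, whose level remains above the threshold $12 g'$, so the inductive hypothesis applies.

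The second step is a Noetherian induction on dimension using Bombieri-Lang. The statement to prove by induction on $\dim Z$ is: if $Z/F$ is projective and every closed irreducible subvariety of $Z$ is of general type, then $Z(F)$ is finite. Indeed $Z$ itself is of general type, so by Conjecture~\ref{Lang_geometric} one has $\Exc(Z) \subsetneq Z$; as noted in the excerpt, $\Exc(Z)$ is then defined over $F$. Bombieri-Lang yields that $Z(F) \setminus \Exc(Z)$ is finite, while each irreducible component of $\Exc(Z)$ is a subvariety of $Z$ of strictly smaller dimension that still satisfies the hypothesis, so the induction closes. Applied to $Z = X$, this gives $|\cA_g(n)(F)| \leq |X(F)| < \infty$.

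The main obstacle is, tautologically, the invocation of the Bombieri-Lang and Green-Griffiths-Lang conjectures themselves: the only unconditional input available is the hyperbolic geometry of Theorem~\ref{main_theorem_for_Ag}, together with a mild but not entirely cosmetic boundary analysis needed to keep the precise bound $n > 12g$ along the Baily-Borel stratification. Everything else is a standard two-step Lang-style descent that becomes a bookkeeping exercise once those conjectures are granted.
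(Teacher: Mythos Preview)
The statement is a \emph{conjecture}, and the paper does not prove it: the paper only asserts, in one sentence, that it ``would'' follow from the Bombieri--Lang conjecture together with Theorem~\ref{main_theorem_for_Ag}. Your proposal is precisely a fleshed-out version of that one-sentence implication, and as a conditional derivation (assuming both Bombieri--Lang and Green--Griffiths--Lang) it is correct and matches the paper's intended reasoning. Your boundary induction on $g$ is exactly the mechanism behind the paper's Corollary~\ref{main_theorem_for_BB}; for $\cA_g(n)$ specifically it works cleanly because the Baily--Borel boundary strata are copies of $\cA_{g'}(n)$ with the \emph{same} level $n$, and $n>12g>12g'$ automatically. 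The Noetherian descent through $\Exc(Z)$ is the standard way to turn ``every subvariety is of general type'' plus the two Lang-type conjectures into finiteness of rational points, and is what the paper has in mind.
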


\vspace{0.5cm}

Our proof of the theorem \ref{main_theorem} relies strongly on Hodge theory. It uses a semi-negativity result for subsheaves of system of log Hodge bundles contained in the kernel of the Higgs field due to Zuo \cite{Zuoneg} and the author \cite{Brunebarbe}, cf. theorem \ref{nef}. It also uses as a main input the existence on bounded symmetric domains of a special class of systems of Hodge bundles said of Calabi-Yau type, that were first introduced by Gross \cite{Gross94} and further studied by Sheng-Zuo \cite{Sheng-Zuo10} and Friedman-Laza \cite{Friedman-Laza13}. Their construction together with their remarkable properties are recalled in section \ref{Calabi-Yau PVHS}. To help the reader who is not familiar with automorphic objects on arithmetic locally symmetric varieties, we have gathered the necessary background material in section \ref{reminder_Arithmetic locally symmetric varieties}.\\
Our main construction (cf. section \ref{Proof of the main theorem}) is reminiscent of a strategy introduced by Viehweg and Zuo in their study of hyperbolicity properties of moduli spaces of canonically polarized varieties \cite{Viehweg-Zuo02}. Accordingly, it is necessary to consider the cotangent bundle and its symmetric powers, even if one wants ultimately
prove some positivity properties of the canonical bundle. The bridge from the latter to the former is crossed thanks to an important result of Campana-Peternell and Campana-P\u{a}un, cf. theorem \ref{Campana-Paun}.

\subsection{Notations}
In this paper, a smooth log pair $(X, D)$ is a smooth complex algebraic variety $X$ together with $D \subset X$ a union of smooth divisors crossing normally. A log pair $(X, D)$ is said projective when $X$ is projective. A morphism of log pairs $f : (X,D) \arrow (Y,E)$ is a morphism $f : X\arrow Y$ such that $f^{-1}(E)\subset D$. A (projective smooth) log-compactification of a smooth complex variety $U$ is a projective smooth log pair $(X,D)$ with an identification $X - D \simeq U$.
In the sequel, all varieties will supposed to be irreducible.

\section{Different notions of positivity for torsion-free sheaves}

In this section, we recall for the reader convenience different positivity notions for torsion-free sheaves on smooth projective complex varieties that we will use later in this paper.

\subsection{} We begin with some notions due to Viehweg. For details and proofs, the reader is referred to \cite[lemma 1.4]{Viehweg83} and \cite[ p.59-67]{Viehweg_book}.

\begin{defn}
Let $X$ be a complex quasi-projective scheme. A coherent sheaf $\cF$ on $X$ is globally generated at a point $x \in X$ if the natural map $\HH^0(X, \cF) \otimes_{\bC} \cO_X \arrow \cF$ is surjective at $x$.
\end{defn}

\begin{defn}[Viehweg]
Let $X$ be a smooth projective complex variety and $\cF$ a torsion-free sheaf on $X$. Let $i : V  \hookrightarrow X$ denotes the inclusion of the biggest open subset on which $\cF$ is locally free. 
\begin{enumerate}
\item We say that $\cF$ is weakly positive over the dense open subset $U \subset V$ if for every ample invertible sheaf $\cH$ on $X$ and every positive integer $\alpha  > 0$ there exists an integer $\beta > 0$ such that $\widehat{S}^{\alpha \cdot \beta} \cF \otimes_{ \cO_X} {\cH}^{\beta}$ is globally generated over $U$.
\item We say that $\cF$ is Viehweg-big over the dense open subset $U \subset V$ if for any line bundle $\cH$ there exists $\gamma > 0$ such that $\widehat{S}^{\gamma} \cF \otimes \cH^{-1}$ is weakly positive over $U$.
\end{enumerate}
Here the notation $\widehat{S}^{k} \cF $ stands for the reflexive hull of the sheaf $S^{k} \cF $, i.e. $\widehat{S}^{k} \cF = i_* ( S^{k} i^*\cF)$.\\
We say that $\cF$ is weakly positive (resp. Viehweg big) if there exists a dense open subset $U \subset V$ such that $\cF$ is weakly positive (resp. Viehweg big) over $U$. 
\end{defn}

\begin{rem}
If $\cF$ is locally free, then $\cF$ is nef if and only if it is weakly positive over $X$, and $\cF$ is ample if and only if it is Viehweg-big over $X$.
\end{rem}

\begin{lem}[Viehweg]\label{Viehweg} Let $\cF$ and $\cG$ be torsion-free sheaves on a smooth projective complex variety $X$.

\begin{enumerate}

\item If $\cF \arrow \cG$ is a morphism, surjective over $U$, and if $\cF$ is weakly positive over $U$, then $\cG$ is weakly positive over $U$.

\item Let $f : Y \arrow X$ be a morphism between two smooth projective complex varieties. If $\cF$ is weakly positive over $U \subset X$ and $f^{-1}(U)$ is dense in $Y$, then $f^* \cF /(f^* \cF)_{tors}$ is weakly positive over $f^{-1}(U)$.

\item If $\cF \arrow \cG$ is a morphism, surjective over $U$, and if $\cF$ is Viehweg-big over $U$, then $\cG$ is Viehweg-big over $U$.

\item If $\cF$ is weakly positive and $\cH$ is a big line bundle, then $\cF \otimes \cH$ is Viehweg-big.

\item Let $f : Y \arrow X$ be a morphism between two smooth projective complex varieties, which is finite over an open $V \subset X$. If $\cF$ is Viehweg-big over $U$ and $f^{-1}(U \cap V)$ is dense in $Y$, then $f^* \cF /(f^* \cF)_{tors}$ is Viehweg-big over $f^{-1}(U \cap V)$.
\end{enumerate}
\end{lem}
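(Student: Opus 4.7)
The plan is to verify each item by unwinding the definitions of weak positivity and Viehweg-bigness and reducing to elementary properties of symmetric and tensor products. The recurring technicality is the interplay between the reflexive hull $\widehat{S}^k$ and the torsion appearing after pullback, which I handle by consistently working on the common locally free locus (which is dense) and extending conclusions by reflexivity.

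For (1), the surjection $\cF \twoheadrightarrow \cG$ over $U$ induces, after applying $\widehat{S}^{\alpha\beta}$ and tensoring with $\cH^\beta$, a surjection on the intersection of $U$ with the common locally free locus; any global sections generating the source there, supplied by weak positivity of $\cF$, also generate the target, giving weak positivity of $\cG$ over $U$. Item (3) then follows at once from (1): given a line bundle $\cH$ on $X$, Viehweg-bigness of $\cF$ produces $\gamma$ with $\widehat{S}^\gamma\cF \otimes \cH^{-1}$ weakly positive over $U$, and its quotient $\widehat{S}^\gamma\cG \otimes \cH^{-1}$ is then weakly positive over $U$ by (1).

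The main technical input is item (2), where the trick is to absorb the pullback of an ample line bundle on $X$ into a prescribed ample line bundle on $Y$. Given $\cA$ ample on $Y$ and $\alpha > 0$, I pick $\cH$ ample on $X$ and choose $N \geq 1$ such that $\cA^N \otimes (f^*\cH)^{-1}$ is globally generated on $Y$. Invoking weak positivity of $\cF$ with exponent $\alpha N$ yields $\beta_0$ with $\widehat{S}^{\alpha N \beta_0}\cF \otimes \cH^{\beta_0}$ globally generated over $U$; pulling back to $Y$, tensoring with the $\beta_0$-th power of $\cA^N \otimes (f^*\cH)^{-1}$, and composing with the natural surjection $f^*\widehat{S}^k\cF \twoheadrightarrow \widehat{S}^k(f^*\cF/\mathrm{tors})$ on the locally free locus produces global generation of $\widehat{S}^{\alpha\beta}(f^*\cF/\mathrm{tors}) \otimes \cA^\beta$ over $f^{-1}(U)$, with $\beta := N\beta_0$. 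Item (5) combines this with Kodaira's lemma: finiteness of $f$ on $V$ makes $f^*\cH$ big on $Y$, so for any line bundle $\cN$ on $Y$ one can write $\cN^{-1} = (f^*\cH)^{-M} \otimes \OO(G)$ with $G$ effective for $M$ large. Applying (2) to the weakly positive sheaf $\widehat{S}^\gamma\cF \otimes \cH^{-M}$ (supplied by Viehweg-bigness of $\cF$) and then tensoring by the effective line bundle $\OO(G)$ to multiply generators by its canonical section produces weak positivity of $\widehat{S}^\gamma(f^*\cF/\mathrm{tors}) \otimes \cN^{-1}$ on a dense open subset of $f^{-1}(U \cap V)$.

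For (4), Kodaira's lemma provides $\gamma \geq 1$, an ample $\cA$, and an effective $E$ with $\cH^\gamma \cong \cA \otimes \OO(E)$, so that $\widehat{S}^\gamma(\cF \otimes \cH) \otimes \cA^{-1} = \widehat{S}^\gamma\cF \otimes \OO(E)$; since $\widehat{S}^\gamma\cF$ inherits weak positivity from $\cF$ via the surjection $\widehat{S}^{\alpha\beta}\widehat{S}^\gamma\cF \twoheadrightarrow \widehat{S}^{\alpha\beta\gamma}\cF$ on the locally free locus, multiplying its generators by the canonical section of $\OO(E)$ gives weak positivity of $\widehat{S}^\gamma\cF \otimes \OO(E)$ over $U \setminus E$, which suffices for Viehweg-bigness of $\cF \otimes \cH$. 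The main obstacle throughout is precisely this bookkeeping with reflexive hulls and torsion: every identification or surjection I use holds only on the common locally free locus, so I must repeatedly verify that the relevant open subsets remain dense and contain the open sets over which I am trying to establish positivity.
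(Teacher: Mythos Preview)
The paper does not give its own proof of this lemma: it simply refers the reader to Viehweg's original sources \cite[lemma 1.4]{Viehweg83} and \cite[pp.~59--67]{Viehweg_book}. Your sketch is therefore not competing with anything in the paper; it is supplying the standard arguments that the paper outsources. Items (1), (2), (3), and the reduction strategy in (5) are all carried out along the usual lines and are fine.

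There is, however, a genuine slip in your argument for (4). You want to conclude that $\widehat{S}^{\gamma}\cF$ is weakly positive over $U$, and you invoke the surjection $\widehat{S}^{\alpha\beta}\widehat{S}^{\gamma}\cF \twoheadrightarrow \widehat{S}^{\alpha\beta\gamma}\cF$. But this surjection points the wrong way: global generation of the \emph{target} $\widehat{S}^{\alpha\beta\gamma}\cF\otimes\cH^{\beta}$ (which is what weak positivity of $\cF$ gives you, applied with exponent $\alpha\gamma$) says nothing about global generation of the \emph{source} $\widehat{S}^{\alpha\beta}\widehat{S}^{\gamma}\cF\otimes\cH^{\beta}$, which is what you need. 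The correct step is the opposite one: in characteristic zero the symmetrization map splits, so $S^{\gamma}\cF$ is a direct summand, hence a quotient, of $\cF^{\otimes\gamma}$ on the locally free locus; since tensor powers of weakly positive sheaves are weakly positive (another of Viehweg's lemmas, which you should also cite or prove), item (1) then yields weak positivity of $\widehat{S}^{\gamma}\cF$. With this fix your argument for (4) goes through. A smaller point: in (5) your conclusion is weak positivity over a dense open subset of $f^{-1}(U\cap V)$ that depends on the auxiliary divisor $G$, hence on the test line bundle $\cN$; to match the statement as written you should either note that one may shrink to a fixed open independent of $\cN$, or observe (as Viehweg does) that Viehweg-bigness over $U$ only requires checking one ample test bundle.
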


\subsection{} We introduce now a weaker notion of bigness. Let $\cE$ be a vector bundle on a smooth projective complex variety $X$. Let $\pi : \bP(\cE) := \Proj_{\cO_X}(\Sym \, \cE) \arrow X$ be the projective bundle of one-dimensional quotients of $\cE$ and $\cO_{\cE}(1) $ be the tautological line bundle which fits in an exact sequence $ \pi^* \cE \arrow  \cO_{\cE}(1) \arrow 0$.

\begin{lem}\label{lemma_bigness}
The following assertions are equivalent:
\begin{enumerate}
\item The line bundle $ \cO_{\cE}(1)$ is Viehweg-big,
\item  for some (resp. any) big line bundle $\cH$, there exists a injective map $ 0 \arrow \cH \arrow S^k \cE$ for some $k >0$, 
\item for some (resp. any) Viehweg-big torsion-free sheaf $\cF$, there exists a non-zero map $\cF \arrow S^k \cE$ for some $k >0$. 
\end{enumerate}
\end{lem}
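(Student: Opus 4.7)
My plan relies on the standard identification $\pi_*\cO_\cE(m) = S^m \cE$, so that by the projection formula a non-zero map $\cL \to S^m\cE$ from a line bundle $\cL$ corresponds to a non-zero section of $\cO_\cE(m)\otimes \pi^*\cL^{-1}$ on $\bP(\cE)$. A second basic fact I shall invoke is that $A := \cO_\cE(1) \otimes \pi^*\cH_0^a$ is ample on $\bP(\cE)$ whenever $\cH_0$ is ample on $X$ and $a \gg 0$.

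For the equivalence $(1) \Leftrightarrow (2)$, I would prove $(2) \Rightarrow (1)$ by starting from an injection $\cH \hookrightarrow S^k\cE$ with $\cH$ big: this yields a non-zero section of $\cO_\cE(k)\otimes \pi^*\cH^{-1}$, and combining with the bigness of $\cH$ (so that $\cH_0 \hookrightarrow \cH^N$ for some $N$ and some ample $\cH_0$ on $X$) produces a section of $\cO_\cE(kN)\otimes \pi^*\cH_0^{-1}$; raising to the power $a$ and multiplying by $\cO_\cE(1)$ realizes $\cO_\cE(kNa+1)$ as an effective class plus the ample class $A$, hence big. Conversely, $(1) \Rightarrow (2)$ follows since bigness is an open condition in the real N\'eron--Severi space: for any big $\cH$ on $X$, the class $k\cdot\cO_\cE(1) - \pi^*\cH$ is big for $k \gg 0$, so some multiple has a section and produces an injection $\cH^N \hookrightarrow S^{kN}\cE$ as required.

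For $(1) \Leftrightarrow (3)$, the direction $(2) \Rightarrow (3)$ is immediate since a big line bundle is Viehweg-big. For the converse $(3) \Rightarrow (1)$, given a non-zero $f : \cF \to S^k\cE$ with $\cF$ Viehweg-big, I would replace $\cF$ by its image $\cF' := f(\cF) \subset S^k\cE$, which remains Viehweg-big by Lemma~\ref{Viehweg}(3). The inclusion $\cF' \hookrightarrow S^k\cE = \pi_*\cO_\cE(k)$ gives by adjunction a natural map $\pi^*\cF' \to \cO_\cE(k)$, surjective on a dense open $V \subset \bP(\cE)$. For $\cH_0$ ample on $X$, Viehweg-bigness of $\cF'$ yields some $\gamma > 0$ and a dense open $U \subset X$ over which $\widehat{S}^\gamma \cF' \otimes \cH_0^{-1}$ is weakly positive; pulling back via Lemma~\ref{Viehweg}(2) and composing with the $\gamma$-th symmetric power of the adjunction surjection, I see that $\cO_\cE(k\gamma)\otimes \pi^*\cH_0^{-1}$ is weakly positive over a dense open of $\bP(\cE)$. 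Unpacking weak positivity against the ample $A$ with parameter $\alpha > a$ and pushing forward to $X$ then delivers an injection of the ample line bundle $\cH_0^{\beta(\alpha - a)}$ into $S^{k\gamma\alpha\beta + \beta}\cE$ for some $\beta > 0$, which is condition $(2)$.

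The step requiring the most care is this last one: juggling the several dense open subsets (from Viehweg-bigness of $\cF'$, from generic surjectivity of $\pi^*\cF' \twoheadrightarrow \cO_\cE(k)$, and from global generation on the open set) and handling the reflexive-hull identity $\pi^*\widehat{S}^\gamma\cF' = \widehat{S}^\gamma \pi^*\cF'$ only generically, so that the final pushforward $\pi_*$ really produces a non-zero \emph{global} section of $S^{k\gamma\alpha\beta + \beta}\cE \otimes \cH_0^{\beta(a - \alpha)}$. The combinatorial/numerical portion of the argument is routine once this sheaf-theoretic bookkeeping is in place.
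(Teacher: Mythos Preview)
Your argument is correct. The implications $(1)\Leftrightarrow(2)$ and the trivial $(2)\Rightarrow(3)$ are handled essentially as in the paper (the paper phrases $(1)\Rightarrow(3)$ directly via \cite[example~2.2.9]{Lazarsfeld04I}, and for $(2)\Rightarrow(1)$ first reduces to $\cH$ ample by Kodaira's lemma, which slightly streamlines your use of $A=\cO_\cE(1)\otimes\pi^*\cH_0^a$).

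The genuine difference is in $(3)\Rightarrow(2)$. You pass to $\bP(\cE)$: replace $\cF$ by its image, take the adjunction map $\pi^*\cF'\to\cO_\cE(k)$, pull back weak positivity along $\pi$, and then unwind against the ample $A$ before pushing forward. The paper stays entirely on $X$: from Viehweg-bigness of $\cF$ it extracts a generically surjective map $\bigoplus\cH^\beta\to\widehat{S}^{2\beta\gamma}\cF$ for some ample $\cH$ and integers $\beta,\gamma$, then simply composes with the non-zero map $\widehat{S}^{2\beta\gamma}\cF\to S^{2\beta\gamma k}\cE$ obtained from the given $\cF\to S^k\cE$ by passing to symmetric powers on the locally free locus and pushing forward. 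This yields a non-zero map $\cH^\beta\to S^{2\beta\gamma k}\cE$ directly. The paper's route avoids precisely the bookkeeping you flag as ``the step requiring the most care'' (tracking several dense opens on $\bP(\cE)$ and the generic identification $\pi^*\widehat{S}^\gamma\cF'\simeq S^\gamma\pi^*\cF'$); your detour through $\bP(\cE)$ works but buys nothing, since the key inequality is already visible on $X$.
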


\begin{defn}
A vector bundle $\cE$ on a smooth projective complex variety $X$ is called big (in the sense of Hartshorne) if it satisfies the equivalent conditions of the lemma \ref{lemma_bigness}.
\end{defn}
The lemma \ref{lemma_bigness} implies that a Viehweg-big vector bundle is big, but the converse is not true (consider for example the rank $2$ vector bundle $ \cO(1) \oplus  \cO(-1)$ on $\bP^1$). Note however that the two notions coincide for line bundles.
 
\begin{proof}[Proof of lemma \ref{lemma_bigness}]
First observe that given a (non necessarily Viehweg-big) torsion-free sheaf $\cF$ on $X$ and locally-free sheaf $\cE$ such that the line bundle $ \cO_{\cE}(1)$ is Viehweg-big, there exists a non-zero map $\cF \arrow S^k \cE$ for any sufficiently big $k$. Indeed, it is sufficient to show the existence of a section of $ \cO_{\cE}(k) \otimes \pi^* \cF^{\vee}$ for any $k \gg 1$, which follows from \cite[example $2.2.9$]{Lazarsfeld04I}. In particular, this shows that $(1) \implies (3)$.\\
Let us now show $(3) \implies (2)$. Let $i : V  \hookrightarrow X$ be the inclusion of the biggest open subset on which $\cF$ is locally free. Given any ample line bundle $\cH$, there exists $\gamma > 0$ such that $\widehat{S}^{\gamma} \cF \otimes \cH^{-1}$ is weakly positive. Therefore, there exists an integer $\beta > 0$ such that $\widehat{S}^{2 \cdot \beta} (\widehat{S}^{\gamma} \cF \otimes \cH^{-1}) \otimes_{ \cO_X} {\cH}^{\beta} \simeq \widehat{S}^{2 \cdot \beta} (\widehat{S}^{\gamma} \cF ) \otimes_{ \cO_X} {\cH}^{-2 \beta + \beta}  $ is generically globally generated, as well as the quotient sheaf $\widehat{S}^{2 \cdot \beta \cdot \gamma}\cF  \otimes_{ \cO_X} {\cH}^{- \beta }  $. This shows the existence of a generically surjective map $ \oplus \cH^{ \beta } \arrow \widehat{S}^{2 \cdot \beta \cdot \gamma} \cF$. On the other hand, the non-zero map $\cF_{|V} \arrow S^k \cE_{|V}$ corresponds to a non-zero map $(\pi^* \cF)_{|\pi^{-1}(V)} \arrow  \cO_{\cE}(k)_{|\pi^{-1}(V)} $, which in turn provides a non-zero map $(S^{2 \cdot \beta \cdot \gamma} \pi^* \cF )_{|\pi^{-1}(V)} \arrow  \cO_{\cE}(2 \cdot \beta \cdot \gamma \cdot k)_{|\pi^{-1}(V)} $, or equivalently a non-zero map $S^{2 \cdot \beta \cdot \gamma} \cF_{|V} \arrow  S^{2 \cdot \beta \cdot \gamma \cdot k} \cE_{|V}$. Finally, by composing the generically surjective map $ \oplus \cH^{ \beta } \arrow \widehat{S}^{2 \cdot \beta \cdot \gamma} \cF$ with the non-zero map $i_*(S^{2 \cdot \beta \cdot \gamma} \cF_{|V}) \arrow  i_*(S^{2 \cdot \beta \cdot \gamma \cdot k} \cE_{|V})$, we get a non-zero map $ \oplus \cH^{ \beta } \arrow  S^{2 \cdot \beta \cdot \gamma \cdot k} \cE$, hence a non-zero map $ \cH^{ \beta } \arrow  S^{2 \cdot \beta \cdot \gamma \cdot k} \cE$.\\
Let us finally show $(2) \implies (1)$, following \cite[example 6.1.23]{Lazarsfeld04II}. Assume that there exists an injective map $ 0 \arrow \cH \arrow S^k \cE$ for some $k >0$ (using Kodaira's lemma, cf. \cite[proposition 2.2.6]{Lazarsfeld04I}, one can assume that $\cH$ is ample). Equivalently, the line bundle $ \cO_{\cE}(k) \otimes \pi^* \cH^{-1}$ has a non-zero section. On the other hand, as $ \cO_{\cE}(1)$ is relatively ample, $ \cO_{\cE}(1) \otimes \pi^* \cH$ is ample (cf. \cite[proposition 1.7.10]{Lazarsfeld04I}). It follows that  $ \cO_{\cE}(k + 1) = (\cO_{\cE}(k) \otimes \pi^* \cH^{-1}) \otimes \cO_{\cE}(1) \otimes \pi^* \cH$ is big.
\end{proof}

\subsection{Complex algebraic varieties with maximal cotangent dimension}

\begin{defn}
A complex algebraic variety $X$ is said to have maximal cotangent dimension if any smooth projective complex variety birational to $X$ has a big cotangent bundle.
\end{defn}

If $X$ is a smooth projective complex variety with maximal cotangent dimension, then any smooth projective complex variety birational to $X$ has the same property, cf. \cite{Sakai}.

\begin{lem}\label{generically finite_mcd}
Let $f : X \arrow Y$ be a generically finite and dominant algebraic map between two complex algebraic varieties. If $Y$ has maximal cotangent dimension, then the same is true for $X$.
\end{lem}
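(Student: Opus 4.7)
The plan is to reduce to a morphism of smooth projective models and then transfer bigness along the natural pullback of Kähler differentials. Since maximal cotangent dimension is a birational invariant (cf. \cite{Sakai}), it is enough to exhibit a single smooth projective variety birational to $X$ whose cotangent bundle is big. To that end, I would fix any smooth projective model $Y'$ of $Y$ (so that $\Omega^1_{Y'}$ is big by hypothesis), consider the rational map $X \dashrightarrow Y'$ induced by $f$, and resolve its indeterminacies to obtain a smooth projective variety $X'$ birational to $X$, equipped with a morphism $g : X' \arrow Y'$. The morphism $g$ inherits from $f$ the property of being generically finite and dominant.

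Next, I would exploit the natural morphism of locally free sheaves $g^* \Omega^1_{Y'} \arrow \Omega^1_{X'}$. Because we work in characteristic zero, $g$ is generically étale, so this map is generically an isomorphism; and since $S^k g^* \Omega^1_{Y'}$ is locally free, hence torsion-free, the induced map
\[ S^k g^* \Omega^1_{Y'} \arrow S^k \Omega^1_{X'} \]
is injective for every $k \geq 1$ (a generically injective morphism from a torsion-free sheaf is injective). By Lemma \ref{lemma_bigness}(2), the bigness of $\Omega^1_{Y'}$ provides an ample line bundle $\cH$ on $Y'$ together with an injection $\cH \hookrightarrow S^k \Omega^1_{Y'}$ for some $k > 0$. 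Pulling back along $g$ and composing with the injection above yields $g^* \cH \hookrightarrow S^k \Omega^1_{X'}$. Since $g$ is generically finite and dominant and $\cH$ is ample, $g^* \cH$ is big, and a second application of Lemma \ref{lemma_bigness}(2) delivers the bigness of $\Omega^1_{X'}$, as required.

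The step demanding the most care is the injectivity of $S^k g^* \Omega^1_{Y'} \arrow S^k \Omega^1_{X'}$: it would fail in general for a non-split inclusion of locally free sheaves, but is rescued here by the combination of torsion-freeness of $S^k$ of a locally free sheaf and the generic étaleness of $g$. Everything else is either a standard birational reduction (resolution of the rational map and invariance via \cite{Sakai}) or a direct appeal to Lemma \ref{lemma_bigness}.
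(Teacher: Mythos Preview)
Your argument is correct, and the paper actually states this lemma without proof, so there is no approach to compare against; your reduction to smooth projective models followed by transporting an injection $\cH \hookrightarrow S^k \Omega^1_{Y'}$ through the generically isomorphic map $g^*\Omega^1_{Y'}\to\Omega^1_{X'}$ is the standard way to verify the claim. One small point you glossed over: pullback along $g$ does not preserve injectivity of sheaf maps in general, but here $\cH \hookrightarrow S^k \Omega^1_{Y'}$ is a nonzero morphism from a line bundle into a locally free sheaf, so its pullback along the dominant $g$ is still nonzero and hence injective (the source being a line bundle on an integral scheme).
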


\begin{theorem}[Campana-Peternell, Campana-Paun, \cite{Campana-Paun1}, see also {\cite[corollary 2.24]{Claudon-Bourbaki}} and the references therein] \label{Campana-Paun}
Any complex algebraic variety of maximal cotangent dimension is of general type. 
\end{theorem}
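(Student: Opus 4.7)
I will argue the contrapositive: if $X$ is smooth projective with $K_X$ not big, then $\Omega_X^1$ is not big in the sense of Lemma \ref{lemma_bigness}. Both conditions are birationally invariant, so we fix one smooth projective model. Suppose for contradiction that there exist $k > 0$ and an injection $L \hookrightarrow S^k \Omega_X^1$ with $L$ a big line bundle on $X$. The plan is to distinguish two cases according to the positivity of $K_X$ and reach a contradiction in each.

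\emph{Case 1: $K_X$ is not pseudo-effective.} By the theorem of Boucksom-Demailly-Paun-Peternell (BDPP), $X$ is uniruled, so through a very general point passes a free rational curve $f \colon \bP^1 \arrow X$. Then $f^* T_X$ is globally generated, hence $f^* \Omega_X^1$ decomposes as a sum of line bundles of non-positive degree, and the same holds for $f^* S^k \Omega_X^1$. Consequently $\deg L|_{\bP^1} \leq 0$, i.e.\ $L$ has non-positive intersection with a covering family of curves. Yet a big line bundle has strictly positive intersection with every movable class (again by BDPP), giving a contradiction.

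\emph{Case 2: $K_X$ is pseudo-effective but not big.} Then $0 \leq \kappa(X) < \dim X$. After a birational modification we realise the Iitaka fibration $\pi \colon X \arrow Y$ as a morphism whose general smooth fibre $F$ satisfies $\kappa(F) = 0$; using abundance (or the orbifold MMP framework of \cite{Campana-Paun1}) we further arrange that $K_F$ is numerically trivial. The cotangent sequence of $F \subset X$ then reads
\[
0 \arrow \pi^* \Omega_Y^1|_F \arrow \Omega_X^1|_F \arrow \Omega_F^1 \arrow 0,
\]
with the left term trivial of rank $\dim Y$. Since $K_F$ is pseudo-effective, the Campana-Paun generic semi-positivity theorem applied to $F$ forces $\Omega_F^1$ to be generically nef; combined with $\det \Omega_F^1 \equiv 0$ this makes $\Omega_F^1$ slope-semistable of slope zero against any movable class on $F$. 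By closure of semistability under extensions and under symmetric powers in characteristic zero, $S^k \Omega_X^1|_F$ is also slope-semistable of slope zero. For general $F$ the restriction of $L$ remains a subsheaf of $S^k \Omega_X^1|_F$, hence $L|_F \cdot \alpha \leq 0$ for every movable class $\alpha$ on $F$. But the bigness of $L$ on $X$ yields $L|_F \cdot (H|_F)^{\dim F - 1} > 0$ for $H$ ample on $X$, a contradiction.

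The principal obstacle is Case 2: Case 1 rests on BDPP and is essentially elementary once that is granted, whereas Case 2 requires the full strength of Campana-Paun's extension of Miyaoka's generic semi-positivity theorem to varieties with merely pseudo-effective canonical class, together with a reduction to a fibre with numerically trivial canonical class (via abundance or Campana's orbifold machinery). These two deep inputs, combined with a Harder-Narasimhan slope analysis of $\Omega_X^1|_F$ and its symmetric powers, form the core of the proof in \cite{Campana-Paun1}.
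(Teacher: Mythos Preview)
The paper does not prove this theorem; it is quoted with references to \cite{Campana-Paun1} and \cite{Claudon-Bourbaki}, so there is no in-paper proof to compare against. I will therefore comment on your sketch relative to the argument in those references.

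Your Case~1 is correct. Case~2, however, has a genuine gap right at the start: the sentence ``Then $0 \leq \kappa(X) < \dim X$'' already assumes the nonvanishing conjecture, since $K_X$ pseudo-effective is \emph{not} known to imply $\kappa(X) \geq 0$ in arbitrary dimension; and the next step ``arrange that $K_F$ is numerically trivial'' requires abundance for the fibres. Neither statement is proved in general, and neither is supplied by the Campana--P\u{a}un paper you cite. Since the theorem is nevertheless unconditional, this cannot be the route actually taken.

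The argument in \cite{Campana-Paun1} avoids the Iitaka fibration altogether. If $K_X$ is pseudo-effective but not big, then $K_X$ lies on the boundary of the pseudo-effective cone, so by BDPP duality there exists a nonzero movable class $\alpha$ with $K_X \cdot \alpha = 0$. Campana--P\u{a}un's generic semi-positivity applies \emph{on $X$ itself} (its only hypothesis is that $K_X$ be pseudo-effective): every torsion-free quotient of $\Omega_X^1$ has non-negative $\alpha$-degree. Combined with $\det \Omega_X^1 \cdot \alpha = K_X \cdot \alpha = 0$, this forces $\Omega_X^1$, and hence $S^k \Omega_X^1$, to be $\alpha$-semistable of slope zero, so $L \cdot \alpha \leq 0$. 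But $L$ big lies in the interior of the pseudo-effective cone, giving $L \cdot \alpha > 0$ for any nonzero movable $\alpha$ --- contradiction. The slope analysis you carry out on the fibre $F$ is exactly the right mechanism; it simply needs to be run on $X$ with a movable class in place of a geometric fibration, which removes the dependence on abundance.
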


\section{Variations of Hodge structures}

The axioms of variations of Hodge structures were introduced by Griffiths in the $60$'s. In this paper, we will need a slight generalization which does not ask for the existence of a real structure on the underlying local system, as in \cite{Deligne87, Simpson92}. 
 
\subsection{Definitions}\label{Def_VHS}
\begin{defn}
Let $V$ be a complex vector space of finite dimension. A (complex) Hodge structure (of weight zero) on $V$ is a decomposition $ V = \bigoplus_{p \in \bZ} V^p $.
A polarization of a complex Hodge structure is a non-degenerate hermitian form $h$ on $V$ making the decomposition $ V = \bigoplus_{p \in \bZ} V^p $ orthogonal,
such that the restriction of $h$ to $V^p$ is positive definite for $p$ even and negative definite for $p$ odd. 
The associated Hodge filtration is the decreasing filtration $F$ on $V$ defined by 
$ F^p := \bigoplus_{ q \geq p} V^q$.
\end{defn}
If we are given a Hodge structure on $V$, then a non-degenerate hermitian form $h$ on $V$ is a polarization exactly when the hermitian form $h_H$ defined by
\begin{equation}\label{Hodge metric}
h_H(u,v):=h(C . u, v)
\end{equation}
for $u,v \in V$ is positive definite. Here $C$ denotes the Weil operator, i.e. the endomorphism of $V$ whose restriction to $V^p$ is the multiplication by $(-1)^p$.\\

\begin{defn}
Let $S$ be a complex manifold. A variation of polarized complex Hodge structures ($\bC$-PVHS) on $S$ is the data of a holomorphic vector bundle $\cV$ equipped with an integrable connection $\nabla$, a $\nabla$-flat non-degenerate hermitian form $h$ and for all $s \in S$ a decomposition of the fibre $\cV_s = \bigoplus_{p \in \bZ} \cV^p$ satisfying the following axioms:

\begin{enumerate}
\item for all $s \in S$, the decomposition $\cV_s = \bigoplus_{p \in \bZ} \cV_s^p $ defines a Hodge structure polarized by $h_s$,
\item  the Hodge filtration $\cF$ varies holomorphically with $s$, 
\item (Griffiths' transversality) $\nabla( \cF^p) \subset \cF^{p-1} \otimes \Omega_S^1$ for all $p$.
\end{enumerate}  
\end{defn}

\begin{defn}
Let $S$ be a complex manifold. A Higgs bundle on $S$ is the data of a holomorphic vector bundle $\EE$ equipped with a $1$-form $\theta \in \Omega^1_S \otimes \End(\EE)$, called the Higgs field, which satisfies $\theta \wedge \theta = 0 \in \Omega^2_S \otimes \End(\EE)$.
A system of Hodge bundles on $S$ is a Higgs bundle $(\EE, \theta)$ with a decomposition $\EE = \bigoplus_{p \in \bZ}  \EE^p$ as a sum of holomorphic subvector bundles such that $\theta(\EE^p) \subset \EE^{p-1} \otimes \Omega_S^1$.
\end{defn}

The Higgs field $\theta$ corresponds to an $\cO_S$-linear morphism $\phi : T_S \arrow \End(\EE)$. The condition $\theta \wedge \theta = 0$ implies that for every $k \geq 1$, the induced morphism $\phi_k : T_S^{\otimes k} \arrow \End(\EE)$ factorizes through $S^k T_S$.\\

If $\bV = (\cV, \nabla, \cF, h)$ is a $\bC$-PVHS, we define the corresponding system of Hodge bundles $ (\EE = \bigoplus_{p \in \bZ}  \EE^p , \theta = \oplus_{p \in \bZ} \theta_p)$ by setting:
\begin{equation}\label{system of Hodge bundles}
 \EE^p = \cF^p / \cF^{p+1} \text{\, and \,} \theta_p := Gr_p^{\cF} \theta.  
\end{equation}

\subsection{Canonical extensions}

\begin{defn}[Log $\bC$-PVHS] Let $(X, D)$ be a smooth log pair, and set $U := X -D$. A log complex polarized variation of Hodge structure (log $\bC$-PVHS) on $(X,D)$ consists in the following data:
\begin{itemize}
\item A holomorphic vector bundle $\cV$ on $X$ endowed with a connection $\nabla$ with logarithmic singularities along $D$.
\item An exhaustive decreasing filtration $\cF$ on $V$ by holomorphic subbundles (the Hodge filtration), satisfying Griffiths transversality
\[ \nabla \cF^p \subset \cF^{p-1} \otimes \Omega^1_X(\log D). \]
\item a $\nabla$-flat non-degenerate hermitian form $h$ on $\cV_{|U}$
\end{itemize}
such that $(\cV_{|U}, \nabla, \cF^{\sbt}_{|U}, h)$ is a $\bC$-PVHS on $U$.
\end{defn}

\begin{defn}[Log Higgs bundles and systems of log Hodge bundles] 
A log Higgs bundle on a smooth log pair $(X,D)$ is the data of a holomorphic vector bundle $\EE$ equipped with a $1$-form $\theta \in \Omega^1_X(\log D) \otimes \End(\EE)$ which satisfies $\theta \wedge \theta = 0 \in \Omega^2_X(\log D) \otimes \End(\EE)$.
A system of log Hodge bundles on $(X,D)$ is a log Higgs bundle $(\EE, \theta)$ with a decomposition $\EE = \bigoplus_{p \in \bZ}  \EE^p$ as a sum of holomorphic subvector bundles such that $\theta(\EE^p) \subset \EE^{p-1} \otimes \Omega_X^1(\log D)$.
\end{defn}

As before, one associates to any log $\bC$-PVHS a system of log Hodge bundles by the formula (\ref{system of Hodge bundles}).\\

Let $(X,D)$ be a smooth log pair with $U := X - D$ and $j : U \hookrightarrow X$ the inclusion. Let $(\cV, \nabla ,  \cF^{\sbt}, h)$ be a $\bC$-PVHS on $U$ whose local monodromies around the irreducible components of $D$ are unipotent. Let $\cV^{\geq 0}$ be Deligne's extension of $\cV$ to $X$, i.e. the unique extension for which $\nabla$ has logarithmic poles and the residues are nilpotent, cf. \cite{Deligne70}. We can extend the Hodge filtration by setting:
\begin{equation*}
\cF^p \cV^{\geq 0}  := \cV^{\geq 0} \cap j_* \cF^p,
\end{equation*}
and it turns out that this is still a filtration by vector subbundles (this is a consequence of Schmid's nilpotent orbit theorem \cite{Schmid73}). It follows that $(\cV^{\geq 0}, \nabla, \cF^{\sbt} \cV^{\geq 0} , h)$ is a log $\bC$-PVHS on $(X,D)$. By taking the associated graded object with respect to $\cF^{\sbt}$, we get the so-called canonical extension $(\EE^{\geq 0}, \theta^{\geq 0})$ of $(\EE, \theta)$ to $(X,D)$.

The following result will be one of the crucial point of the proof of theorem \ref{main_theorem}:

\begin{theorem}[Zuo {\cite[theorem 1.2]{Zuoneg}}, see also {\cite[theorem 1.6]{Brunebarbe}}]\label{nef} 
Let $\mathbb{V}$ be a log $\bC$-PVHS on a projective smooth log pair $(X,D)$ with nilpotent residues along the irreducible components of $D$, and denote by $(\EE, \theta)$ the corresponding system of log Hodge bundles.
If $\cA$ is a holomorphic subbundle of $\EE$ contained in the kernel of the Higgs field $\theta$, then its dual $\cA^\vee$ is nef.
\end{theorem}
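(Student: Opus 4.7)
The plan is to compute the Chern curvature of the Hodge metric restricted to $\cA$ on the interior $U := X - D$, show it is Griffiths semi-negative, and then use the asymptotic analysis of the Hodge metric along $D$ to upgrade this to nefness of $\cA^\vee$ on all of $X$.

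\smallskip

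\emph{Step 1 (curvature on $U$).} Over $U$ the log $\bC$-PVHS is an ordinary $\bC$-PVHS, so $\EE|_U$ carries the positive-definite Hodge metric $h_H$ attached to the polarization $h$ via equation~(\ref{Hodge metric}). This metric makes the decomposition $\EE = \bigoplus_p \EE^p$ orthogonal. The $\nabla$-flatness of $h$ combined with Griffiths transversality then yields the classical curvature identity: for $s \in \EE^p$ and $v \in T_U^{1,0}$,
\[
\bigl\langle \Theta_{h_H}(\EE^p)(v, \bar v)\, s,\, s\bigr\rangle_{h_H}
\;=\; \|\theta_p(v)\, s\|_{h_H}^2 \;-\; \|\theta_{p+1}(v)^{*}\, s\|_{h_H}^2,
\]
where $\theta_{p+1}(v)^{*}: \EE^p \to \EE^{p+1}$ denotes the $h_H$-adjoint of $\theta_{p+1}(v)$.

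\smallskip

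\emph{Step 2 (semi-negativity on $\cA$).} For a local section $s$ of $\cA$, write $s = \sum_p s_p$ in the Hodge decomposition. The assumption $\theta|_\cA = 0$ forces $\theta_p(v)\, s_p = 0$ for every $p$ and every $v$, since the various $\theta_p(v) s_p$ lie in distinct $\EE^{p-1}$'s. Since the Chern connection of $(\EE, h_H)$ is the direct sum of the Chern connections of the graded pieces, summing the identity of Step~1 over $p$ gives
\[
\bigl\langle \Theta_{h_H}(\EE)(v, \bar v)\, s,\, s\bigr\rangle_{h_H}
\;=\; -\sum_p \|\theta_{p+1}(v)^{*}\, s_p\|_{h_H}^2 \;\leq\; 0.
\]
The standard curvature formula $\Theta(\cA) = \Theta(\EE)|_\cA - B^{*} \wedge B$ for the holomorphic subbundle $\cA \hookrightarrow \EE$ then shows that $h_H|_\cA$ on $\cA|_U$ is Griffiths semi-negative, so the induced dual metric on $\cA^\vee|_U$ is Griffiths semi-positive.

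\smallskip

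\emph{Step 3 (extension to $X$; main obstacle).} It remains to upgrade the semi-positivity of a smooth Hermitian metric on $\cA^\vee|_U$ to nefness of the algebraic vector bundle $\cA^\vee$ on $X$. The nilpotent-residues hypothesis is essential here: Schmid's nilpotent orbit theorem (together with the refinements of Cattani--Kaplan--Schmid) will guarantee that, on the Deligne canonical extension of $\EE$, both $h_H$ and $h_H^{-1}$ grow at most polynomially in $\log(1/|z_i|)$ for local equations $z_i$ of the branches of $D$. Consequently, $h_H$ induces on $\cA^\vee$ a singular Hermitian metric with positive curvature current and at most logarithmic singularities along $D$. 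Following the strategy of~\cite{Zuoneg} and~\cite{Brunebarbe}, one deduces nefness by checking that for every morphism $\phi: C \to X$ from a smooth projective curve, direct estimates of the pull-back Hodge norm give $\deg(\phi^{*}\cA^\vee) \geq 0$, which is equivalent to nefness. Step~3 is the hardest part: crossing from a pointwise curvature inequality on the open part to a global algebraic positivity statement on the compactification requires the full strength of the asymptotic analysis of VHS with nilpotent local monodromy.
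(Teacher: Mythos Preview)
The paper does not supply a proof of this theorem: it is quoted as a known result, with references to Zuo \cite{Zuoneg} and the author's earlier work \cite{Brunebarbe}, and is then used as a black box (via Corollary~\ref{weakly negativity of kernels}) in the proof of Proposition~\ref{main_construction}. So there is no ``paper's own proof'' to compare against.

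That said, your outline does track the argument in the cited references. The curvature identity in Step~1 and the semi-negativity deduction in Step~2 are the standard Griffiths computation and are correct as written (your observation that $\theta(s)=0$ forces each graded piece $\theta_p(v)s_p$ to vanish, because they land in distinct summands, is exactly the point). Step~3 is indeed where the content lies, and you have identified the right ingredients: the Hodge-norm asymptotics coming from the nilpotent orbit theorem, which ensure that the induced metric on $\cA^\vee$ extends as a singular metric with semi-positive curvature current and acceptable (logarithmic) singularities along $D$. One small imprecision: writing ``$\deg(\phi^{*}\cA^\vee) \geq 0$'' is not the nefness condition for a vector bundle; you need non-negativity of the degree of every quotient line bundle of $\phi^{*}\cA^\vee$ (equivalently, of $\cO_{\cA^\vee}(1)$ on every curve in $\bP(\cA^\vee)$). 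The argument in \cite{Zuoneg} proceeds exactly this way, reducing to curves and then controlling each such quotient via the curvature estimate plus the boundary analysis.
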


On easily infers: 

\begin{cor}[cf. {\cite[corollary 3.4]{Popa-Wu16}}]\label{weakly negativity of kernels}
Let $\mathbb{V}$ be a log $\bC$-PVHS on a projective smooth log pair $(X,D)$ with unipotent local monodromies around the irreducible components of $D$, and denote by $(\EE, \theta)$ the corresponding system of log Hodge bundles. If $\cA$ is a coherent subsheaf of $\EE$ contained in the kernel of $\theta$, then its dual ${\cA}^{\vee}$ is a weakly positive torsion-free sheaf.
\end{cor}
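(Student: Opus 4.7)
I would reduce, via saturation and a suitable log-resolution, to the subbundle case already handled by Theorem \ref{nef}.

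First, I would replace $\cA$ by its saturation $\cA^{\mathrm{sat}}$ inside $\EE$: since the target $\EE \otimes \Omega^1_X(\log D)$ of $\theta$ is torsion-free, $\cA^{\mathrm{sat}}$ still lies in $\ker\theta$ (the Higgs field vanishes on the dense subsheaf $\cA$, hence on its saturation); dualizing the inclusion $\cA \hookrightarrow \cA^{\mathrm{sat}}$ yields an injection $(\cA^{\mathrm{sat}})^\vee \hookrightarrow \cA^\vee$ that is an isomorphism away from the codimension $\geq 1$ support of the torsion quotient $\cA^{\mathrm{sat}}/\cA$, so by Lemma \ref{Viehweg}(1) weak positivity of $(\cA^{\mathrm{sat}})^\vee$ transfers to $\cA^\vee$ over this dense open set. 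After this reduction $\cA$ is reflexive and locally free over a big open subset $V \subset X$ (complement of codimension $\geq 2$).

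Next, using Hironaka's flattening theorem, I would produce a projective birational morphism $\pi : \tilde X \to X$, with $\tilde X$ smooth and $\pi$ an isomorphism above $V$, such that $\tilde D := \pi^{-1}(D) \cup \Exc(\pi)$ is a simple normal crossings divisor and the saturation $\tilde \cA$ of the image of $\pi^* \cA \to \pi^* \EE$ inside $\pi^* \EE$ is a subbundle. The pullback $\pi^* \mathbb{V}$ is then a log $\bC$-PVHS on $(\tilde X, \tilde D)$ with unipotent local monodromies, whose associated system of log Hodge bundles is $\pi^*(\EE, \theta)$, and $\tilde \cA$ lies in $\ker(\pi^*\theta)$ (the inclusion is visible generically and propagates everywhere because $\pi^* \EE \otimes \Omega^1_{\tilde X}(\log \tilde D)$ is torsion-free). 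Applying Theorem \ref{nef} to $\tilde \cA \subset \pi^* \EE$ then yields that $\tilde \cA^\vee$ is nef on $\tilde X$, in particular weakly positive.

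Finally, the natural morphism $\pi^* \cA \to \tilde \cA$, being an isomorphism over $\pi^{-1}(V) = V$, dualizes to a morphism $\tilde \cA^\vee \to \pi^*(\cA^\vee)$ that is an isomorphism on the dense open $\pi^{-1}(V)$, so Lemma \ref{Viehweg}(1) gives weak positivity of $\pi^*(\cA^\vee)/\mathrm{torsion}$ over $\pi^{-1}(V)$. I would conclude by descending to $X$ using the standard compatibility of Viehweg's weak positivity with projective birational morphisms (a consequence of $\pi_* \cO_{\tilde X} = \cO_X$, the projection formula, and the fact that $\pi^*$ of ample is nef and big). The delicate step is this last descent: although folklore in Viehweg's framework, it requires care because an ample $\cH$ on $X$ pulls back only to a nef and big line bundle on $\tilde X$, so verifying the global-generation condition for $\widehat{S}^{\alpha\beta}(\cA^\vee) \otimes \cH^\beta$ on $X$ from the analogous data on $\tilde X$ demands some bookkeeping with symmetric powers, twists, and pushforward of sections.
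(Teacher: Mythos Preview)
The paper does not spell out a proof; it merely says ``One easily infers'' and cites \cite[corollary~3.4]{Popa-Wu16}. Your argument---saturate $\cA$, pass to a log resolution $\pi:(\tilde X,\tilde D)\to(X,D)$ on which the saturation becomes a genuine subbundle of $\pi^*\EE$, apply Theorem~\ref{nef} upstairs, then descend weak positivity along~$\pi$---is exactly the standard reduction one has in mind, and it is correct. Two small remarks. First, the modification making $\tilde\cA$ a subbundle is more accurately obtained by resolving the torsion-free quotient $\EE/\cA^{\mathrm{sat}}$ (blow up its Fitting ideals and desingularize) than by invoking ``flattening'' per se. Second, the birational descent you rightly flag as delicate is routine in Viehweg's framework: write $\pi^*\cH$ as an ample plus an effective divisor on~$\tilde X$ via Kodaira's lemma, use nefness of $\tilde\cA^\vee$ to get global generation of $S^{\alpha\beta}\tilde\cA^\vee\otimes\pi^*\cH^\beta$ away from that effective divisor, and then transport sections down using $\pi_*\cO_{\tilde X}=\cO_X$ together with the reflexivity of $\widehat{S}^{\alpha\beta}\cA^\vee$ (so that sections over the big open $V$ extend over~$X$).
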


\section{Arithmetic locally symmetric varieties and automorphic objects}\label{reminder_Arithmetic locally symmetric varieties}
We collect in this section some facts about arithmetic locally symmetric varieties that will be used in the sequel. References include \cite[chapter III $\S 2$]{AMRT} and \cite{Deligne71, Deligne77, Helgason78, Milne13, Mok_book}.

\subsection{Generalities on bounded symmetric domains}
\begin{defn}
A bounded symmetric domain $\cD$ is a bounded connected open subset in some $\bC^n$ such that every $z \in D$ is an isolated fixed point of an holomorphic involution (equivalently, there exits an holomorphic involution fixing $z$ and acting by $- \mathrm{Id}$ on the tangent space at $z$).
\end{defn}

Let $\Aut(\cD)$ be the group of holomorphic automorphisms of $\cD$, and $\Is(\cD)$ be the group of isometries of $\cD$ with respect to the riemannian structure defined by the Bergman metric (cf. \cite[chapter 4]{Mok_book}). Both $\Aut(\cD)$ and $\Is(\cD)$ are semi-simple real Lie groups with finitely many connected components, and $\Aut(\cD)^+ = \Is(\cD)^+$ is a semi-simple real Lie group with trivial center. In particular, $\Is(\cD)^+ = \Ad(\mathfrak{g}) \subset \Aut(\mathfrak{g})$ where $\mathfrak{g}$ is the Lie algebra of $\Is(\cD)$ and $^+$ denotes the connected component of the identity in the euclidean topology. In fact, it is known that $\Is(\cD) = \Aut(\mathfrak{g})$. Let $\mathbf{G}$ be the connected component of the identity subgroup of the real algebraic group $\mathbf{Aut}(\mathfrak{g})$, so that $ \mathbf{G}(\bR)^+ = \Aut(\cD)^+ \subset \mathbf{Aut}(\mathfrak{g})(\bR)$. We call $\mathbf{G}$ the real algebraic group associated to $\cD$. It is a connected semi-simple real algebraic group of adjoint type. For every $z \in \cD$, the subgroup $K_z \subset \mathbf{G}(\bR)^+$ of biholomorphisms fixing $z$ is a maximal compact subgroup of $\mathbf{G}(\bR)^+$. Choosing a basepoint $z$ for $\cD$, we have an identification $\cD = \mathbf{G}(\bR)^+/ K_z$.\\ 

Let $\cD$ be a bounded symmetric domain and $\mathbf{G}$ be the corresponding real algebraic group of adjoint type. We call $\cD$ irreducible when $\mathbf{G}$ is simple. In general, $\cD$ can be decomposed uniquely as a product of irreducible bounded symmetric domains, and this decomposition corresponds to the decomposition of $\mathbf{G}$ as a direct product of its simple subgroups.
The rank of $\cD$ is by definition the rank of the real algebraic group $\mathbf{G}$, i.e. the dimension of a maximal split torus. It is denoted by $\rk \cD$.

\begin{defn}
A subgroup $\Gamma$ of $\Aut(\cD)$ is called arithmetic if there exists a $\bQ$-form $\mathbf{G}_{\bQ} $ of $\mathbf{G}$ and an embedding $\mathbf{G}_{\bQ} \hookrightarrow \mathbf{Gl}_n$ defined over $\bQ$ such that $\Gamma$ is commensurable with $\mathbf{G}_{\bQ}(\bQ) \cap \mathbf{Gl}_n(\bZ)$, i.e. the intersection is of finite index in each. This property turns out to be independent of the embedding. 
\end{defn}

Given a $\bQ$-group $\mathbf{G}_{\bQ} $, a subgroup $\Gamma \subset \mathbf{G}_{\bQ}(\bQ)$ is called neat (cf. \cite[\S 17.1]{Borel_book69}) if for any representation $\mathbf{G}_{\bQ} \arrow \mathbf{Gl}_n$ defined over $\bQ$ and any element $\gamma \in \Gamma$, the subgroup of $\bC^*$ generated by the eigenvalues of the automorphism of $\bC^n$ associated to $\gamma$ is torsion-free.
In particular, $\Gamma$ is torsion-free. Moreover, any arithmetic group admits a finite-index neat subgroup.

\begin{defn}
An arithmetic locally symmetric variety is a complex analytic space which is isomorphic to a quotient of a bounded symmetric domain $\cD$ by
 a torsion-free arithmetic lattice $\Gamma \subset \Aut(\cD)$.
\end{defn}

It follows from the work of Baily-Borel \cite{Baily-Borel} that every arithmetic locally symmetric variety $X = \Gamma  \backslash \cD$ admits a canonical compactification by a normal projective variety that we denote $\overline{X}^{BB}$. In particular, $X$ admits a canonical (and in fact unique) structure of smooth quasi-projective variety. However, Igusa and others have shown that the singularities of $\overline{X}^{BB}$ are very complicated in general. In order to address this problem, Mumford et al. \cite{AMRT} have introduced the collection of so-called toroidal compactifications of $X$, which are algebraic spaces with only quotient singularities. These compactifications are not unique, they depend on the choice of a combinatorial data $\Sigma$. Moreover, when $\Gamma$ is neat, it is always possible to find a toroidal compactification $\overline{X}^{\Sigma}$ of $X$ such that $(\overline{X}^{\Sigma}, D)$ is a projective log-compactification of $X$, where $D := \overline{X}^{\Sigma}-X$. For any toroidal compactification $\overline{X}^{\Sigma}$, the identity map $X \arrow X$ can always be extended to a holomorphic map $\overline{X}^{\Sigma} \arrow\overline{X}^{BB}$.   \\

We denote by $\mathbf{U}(1)$ the real algebraic group $\mathbf{SO}(2)$ and by $U(1)= \mathbf{U}(1)(\bR)$ the corresponding real Lie group. For every $z \in \cD$, there exists a unique group homomorphism $u_z : U(1) \arrow K_z \subset \mathbf{G}(\bR)^+$ such that for any $\lambda \in U(1)$, the automorphism of the tangent space $T_z \cD$ at $z$ induced by $u_z(\lambda)$ is the multiplication by $\lambda$ coming from the complex structure on $\cD$. The subgroup $K_z$ is the centralizer of $u_z(U(1))$ in $ \mathbf{G}(\bR)^+$ (in particular $K_z$ is connected) and, if $\cD$ is irreducible, $u_z(U(1))$ is the center of $K_z$. All the morphisms $u_z$ for $z \in \cD$ are conjugated under $\mathbf{G}(\bR)^+$, hence $\cD$ can be identified with the $\mathbf{G}(\bR)^+$-conjugacy class of one such $u_z : U(1) \arrow  \mathbf{G}(\bR)^+$. Observe that every $u_z$ comes from a real algebraic morphism $u_z : \mathbf{U}(1) \arrow \mathbf{G}$ which satisfies the following properties:
\begin{enumerate}
\item the induced action of $ \mathbf{U}(1)$ on the complex Lie algebra $\Lie(\mathbf{G}(\bC))$ has weights $-1, 0, 1$,
\item $\ad(u_z(-1))$ is a Cartan involution of $\mathbf{G}$,
\item $u_z(-1)$ does not project to $1$ in any simple factor of $\mathbf{G}$. 
\end{enumerate}

Recall that a Cartan involution of a (non necessarily connected) real linear algebraic group $\mathbf{G}$ is an involution $\sigma$ of $\mathbf{G}$ such that the real form $\mathbf{G}^{\sigma}$ of $\mathbf{G}_{\bC}$ associated to the complex conjugation $ g \mapsto \sigma(\bar{g})$ is compact, i.e. $\mathbf{G}^{\sigma}(\bR)$ is compact and meets all connected components of $\mathbf{G}^{\sigma}_{\bC}(\bC) = \mathbf{G}_{\bC}(\bC)$. Note that property $(3)$ is equivalent to ask that $\mathbf{G}$ has no compact factor.\\

Conversely, if $\mathbf{G}$ is a connected real adjoint algebraic group and $u:  \mathbf{U}(1) \arrow \mathbf{G}$ is a morphism which satisfies the preceding conditions $(1)$, $(2)$ and $(3)$, then the $\mathbf{G}(\bR)^+$-conjugacy class $\cD$ of $u$ has a natural structure of a bounded symmetric domain for which $u(\lambda)$ acts on $T_{u} \cD$ as multiplication by $\lambda$. Moreover, $ \mathbf{G}(\bR)^+ = \Aut(\cD)^+$.

\begin{defn} A (pointed) \textit{real} Hodge datum of hermitian type is a couple $( \mathbf{G} , u)$ composed of a connected \textit{real} adjoint algebraic group $\mathbf{G}$ and a non-trivial morphism $u:  \mathbf{U}(1) \arrow \mathbf{G}$ which satisfies the preceding conditions $(1)$, $(2)$ and $(3)$. 
A (pointed) \textit{rational} Hodge datum of hermitian type is a couple $( \mathbf{G} , u)$ composed by a connected \textit{rational} adjoint algebraic group $\mathbf{G}$ and a non-trivial morphism $u:  \mathbf{U}(1) \arrow \mathbf{G}_{\bR}$ which satisfies the preceding conditions $(1)$, $(2)$ and $(3)$. 
\end{defn}

Our preceding discussion shows that there is a natural correspondence between isomorphism classes of pointed bounded symmetric domains and isomorphism classes of real Hodge data of hermitian type. \\

Let $( \mathbf{G} , u)$ be a real Hodge datum of hermitian type, and denote by $K$ the centralizer of $u(U(1))$ in $G:= \mathbf{G}(\bR)^+$. The element $u(-1) \in  G$ defines an involution $\sigma$ of $\mathfrak{g} = \Lie(G)$. The subspace where $\sigma$ acts by the identity is $\Lie(K)$, and we denote by $\mathfrak{p}$ the subvector space where $\sigma$ acts by $- \Id$, so that $\Lie(G) = \Lie(K) \oplus \mathfrak{p}$.
Let also $ \mathfrak{p}_{\bC} =  \mathfrak{p}_+ \oplus  \mathfrak{p}_- $ be the decomposition into $\pm i$-eigenspaces for $\sigma_{\bC}$, and $P_{\pm}$ be the subgroups of $ G$ generated by $\exp( \mathfrak{p}_{\pm})$. Then $K_{\bC}$ normalizes $P_{\pm}$ and $Q := K_{\bC} \cdot P_+$ is a parabolic subgroup of $ G$. Here, $K_{\bC}$ is the complexification of $K$.  This provides a $G$-equivariant open immersion $\cD = G/K \hookrightarrow \hat{\cD} = G_{\bC} / Q$ of $\cD$ in its compact dual $\hat{\cD}$.

\subsection{Automorphic bundles}
We follow the presentation of \cite{Zucker81}.

Let $(\mathbf{G}, u)$ be a real Hodge datum of hermitian type, and $\cD$ be the associated bounded symmetric domain. Let $G$ be a connected real semi-simple Lie group with finite center such that $G^{ad} = \mathbf{G}(\bR)^+ $. Let $K$ be a maximal compact subgroup of $G$, so that there is an identification $\cD = G/K $ and $K$ is the stabilizer in $G$ of a point $z_0 \in \cD$. Finally, let $\Gamma$ be a torsion-free discrete subgroup of $G$.

\subsubsection{} Let $\cE$ be a $G$-equivariant $C^{\infty}$ real (resp. complex) vector bundle on $\cD$, i.e. a $C^{\infty}$-vector bundle on $\cD$ endowed with a left action of $G$ which covers the $G$-action on $\cD$. Then the subgroup $K \subset G$ acts on the fibre of $\cE$ at $z_0$. Conversely, given any finite-dimensional real or complex representation $\tau : K \arrow \Gl(V)$, we define a $G$-equivariant $C^{\infty}$-vector bundle $\cE(\tau)$ on $\cD$ whose fibre at $z_0$ is identified with $V$ by taking the quotient of $G \times V$ by the following right-action of $K$ (which covers the natural right-action of $K$ on $G$):
\[   (g,v)\cdot k :=  (g k, \tau(k^{-1} ) v). \]

\begin{ex}
The left-action of $K$ on $\mathfrak{p}_{\bC}$ coming from the adjoint representation yields the complexified tangent bundle of $\cD$.
\end{ex}

For any torsion-free discrete subgroup $\Gamma$ of $G$, the $C^{\infty}$-vector bundle $\cE(\tau)$ descends to a $C^{\infty}$-vector bundle on $\Gamma \backslash \cD$ that we denote abusively $\cE(\tau)$.

\subsubsection{}  For any torsion-free discrete subgroup $\Gamma$ of $G$ and any finite-dimensional real or complex representation $\psi : \Gamma \arrow \Gl(V)$, we get a flat vector bundle $\Psi(\psi)$ on $\Gamma \backslash \cD$ defined as the quotient of $\cD \times V$ under the left-action of $\Gamma$:
\[  \gamma \cdot (x , v) = (\gamma \cdot x, \psi(\gamma) v). \]

\begin{prop} Let $\rho : G \arrow  \Gl(V)$ be a finite-dimensional real or complex representation (which induces by restriction representations of $\Gamma$ and $K$). Then the mapping $\Xi : G \times V \arrow G \times V$ defined by $\Xi (g , v) =( g , \rho(g)^{-1} v)$ induces an isomorphism of $C^{\infty}$-vector bundles $\Xi : \Psi( \rho_{| \Gamma}) \cong  \cE(\rho_{|K} )$.
\end{prop}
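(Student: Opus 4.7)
The plan is to show that the diffeomorphism $\Xi(g,v) = (g,\rho(g)^{-1}v)$ intertwines the pair of commuting actions (right $K$ and left $\Gamma$) used to construct $\Psi(\rho_{|\Gamma})$ with the analogous pair used to construct $\cE(\rho_{|K})$. Once this is done, $\Xi$ descends to the required $C^{\infty}$-vector bundle isomorphism on the double quotient $\Gamma \backslash G / K = \Gamma \backslash \cD$.

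First, I would rewrite both bundles as quotients of the trivial bundle $G \times V \arrow G$ by commuting right-$K$ and left-$\Gamma$ actions. For $\Psi(\rho_{|\Gamma})$ the right $K$-action is $(g,v)\cdot k = (gk,v)$ (trivial on $V$) and the left $\Gamma$-action is $\gamma\cdot(g,v) = (\gamma g,\rho(\gamma)v)$. For $\cE(\rho_{|K})$ the twist sits on the other side: right $K$ acts by $(g,v)\cdot k = (gk,\rho(k)^{-1}v)$ while $\Gamma$ acts trivially on $V$ via $\gamma\cdot(g,v) = (\gamma g,v)$. Both structures then fit together as the double quotient.

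Next I would verify both equivariances of $\Xi$ by direct calculation. For right $K$-equivariance, the identity $\rho(gk)^{-1} = \rho(k)^{-1}\rho(g)^{-1}$ gives $\Xi(gk,v) = (gk,\rho(k)^{-1}\rho(g)^{-1}v)$, which equals $\Xi(g,v)\cdot k$ in the $\cE$-structure. For left $\Gamma$-equivariance, the cocycle identity $\rho(\gamma g)^{-1}\rho(\gamma) = \rho(g)^{-1}$ collapses the monodromy twist, giving $\Xi(\gamma g,\rho(\gamma)v) = (\gamma g,\rho(g)^{-1}v) = \gamma\cdot \Xi(g,v)$ in the $\cE$-structure. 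This cancellation between the monodromy twist on the $\Psi$-side and the fibrewise twist introduced by $\Xi$ is the only real content of the statement.

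Since $\Xi$ is fibrewise linear with smooth inverse $(g,w)\mapsto(g,\rho(g)w)$, it is a vector bundle isomorphism of $G \times V \arrow G$, and by the two equivariances above it descends to a $C^{\infty}$-vector bundle isomorphism on $\Gamma\backslash G/K$, which is the asserted identification $\Psi(\rho_{|\Gamma}) \cong \cE(\rho_{|K})$. The proof is pure bookkeeping of the two side-actions; there is no substantive obstacle.
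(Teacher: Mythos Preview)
Your argument is correct: the two equivariance checks are exactly what is needed, and your computations are accurate. The paper itself states this proposition without proof (it is a standard fact, attributed in spirit to \cite{Zucker81}), so there is nothing to compare against; your write-up supplies precisely the routine verification that the paper omits.
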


\subsubsection{}
Let $G_{\bC}$ be the complexification of $G$. As before, the $G_{\bC}$-equivariant holomorphic vector bundles on $\hat{\cD}$ are in correspondence with finite-dimensional complex representations of $Q$. Given a representation $\sigma : Q \arrow \Gl(V)$, we denote by $\cE(\sigma)$ the associated holomorphic vector bundle on $\hat{\cD}$ with a left action of $G_{\bC}$ which covered the $G_{\bC}$-action on $\hat{\cD}$. It induces by restriction a holomorphic vector bundle on $\cD$ with a left action of $G$ which covered the $G$-action on $\cD$, which descends to a holomorphic vector bundle on $\Gamma \backslash \cD$.

\begin{prop} Let $\rho : G \arrow  V$ be a finite-dimensional complex representation which then induces representations of $\Gamma$ and $Q$. Then the mapping $\Xi : G_{\bC} \times V \arrow G_{\bC} \times V$ defined by $\Xi (g , v) = (g , \rho(g)^{-1} v)$ induces an isomorphism of holomorphic vector bundles $\Xi : \Psi( \rho_{| \Gamma}) \cong  \cE({\rho_{\bC}}_{|Q} )$.
\end{prop}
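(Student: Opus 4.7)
The plan is to reduce to the previous proposition for the $C^\infty$ category and then upgrade to holomorphicity using the algebraicity of $\rho$. First, I would note that $\rho: G \arrow \Gl(V)$ comes from an algebraic representation (implicit in the setup, since $\mathbf{G}$ is a real algebraic group and the representation is supposed to extend to $Q \subset G_\bC$), so $\rho$ extends to a holomorphic group homomorphism $\rho_\bC : G_\bC \arrow \Gl(V)$. The previous proposition already provides a $C^\infty$-isomorphism $\Xi : \Psi(\rho_{|\Gamma}) \cong \cE(\rho_{|K})$. Since $G \cap Q = K$ under the Borel embedding $G/K \hookrightarrow G_\bC/Q$, the underlying $C^\infty$-bundle of the restriction of $\cE({\rho_\bC}_{|Q})$ to $\Gamma \backslash \cD$ coincides with $\cE(\rho_{|K})$, so $\Xi$ already gives a $C^\infty$-isomorphism of the correct shape.

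It remains to verify that $\Xi$ is holomorphic, and for this I would work on the universal cover $\cD$. On the source side, the pullback of $\Psi(\rho_{|\Gamma})$ to $\cD$ is the trivial holomorphic bundle $\cD \times V$; its holomorphic sections over an open $\tilde U \subset \cD$ are just holomorphic maps $s : \tilde U \arrow V$. On the target side, a holomorphic section of $\cE({\rho_\bC}_{|Q})$ over $\tilde U$ (viewed as an open in $\hat{\cD}$ via the Borel embedding) is a holomorphic function $f : \pi^{-1}(\tilde U) \arrow V$ on the preimage in $G_\bC$ that satisfies the $Q$-equivariance $f(gq) = \rho_\bC(q)^{-1} f(g)$. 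The natural candidate, forced by compatibility with the $C^\infty$ formula on $G$, is
\[ f(g) \,:=\, \rho_\bC(g)^{-1}\, s(g Q). \]
I would then check three points: (i) this formula reduces on $g \in G$ to the formula $g \mapsto \rho(g)^{-1} s(gK)$ appearing in the proof of the previous proposition, so $\Xi$ is indeed the map being considered; (ii) the $Q$-equivariance $f(gq) = \rho_\bC(q)^{-1} f(g)$ holds, since $\rho_\bC(gq)^{-1} = \rho_\bC(q)^{-1}\rho_\bC(g)^{-1}$ and $gqQ = gQ$; (iii) holomorphicity, which is immediate from holomorphicity of $s$, of $\pi : G_\bC \arrow \hat{\cD}$, and of the inversion of $\rho_\bC$.

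Finally, I would check $\Gamma$-equivariance of the construction (using $\gamma \in \Gamma \subset G \subset G_\bC$ and $\rho_\bC(\gamma) = \rho(\gamma)$), which is essentially the same computation as in the previous proposition. This yields a well-defined holomorphic bundle isomorphism on $\Gamma \backslash \cD$. The main obstacle, such as it is, is purely bookkeeping: matching the two descriptions of the underlying $C^\infty$-bundle (one coming from $G/K$, one from the Borel embedding into $G_\bC/Q$) and writing the explicit holomorphic extension of the section formula from $G$ to $G_\bC$; the algebraic step -- holomorphic extension of $\rho$ to $\rho_\bC$ -- is automatic.
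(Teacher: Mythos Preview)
The paper states this proposition without proof (it is a standard fact, cf.\ Zucker \cite{Zucker81}), so there is no argument to compare against. Your proof is correct: the essential content is precisely the three checks you list --- $Q$-equivariance of $f(g)=\rho_{\bC}(g)^{-1}s(gQ)$, its holomorphicity (which uses that $\rho$ extends to an algebraic, hence holomorphic, representation $\rho_{\bC}$ of $G_{\bC}$), and $\Gamma$-equivariance --- together with the observation that $\Xi$ is manifestly invertible with holomorphic inverse $(g,v)\mapsto (g,\rho_{\bC}(g)v)$. One small suggestion: rather than passing through sections, it is slightly cleaner to verify directly on total spaces that $\Xi$ intertwines the two pairs of $(Q,\Gamma)$-actions on $G_{\bC}\times V$ (trivial $Q$-action and $\rho$-twisted $\Gamma$-action on the source; $\rho_{\bC}$-twisted $Q$-action and trivial $\Gamma$-action on the target), which immediately gives the descent to the double quotient; but this is only a cosmetic difference from what you wrote.
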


\subsubsection{}
Every complex representation $\tau : K \arrow \Gl(V)$ extends canonically to a morphism $K_{\bC} \arrow \Gl(V)$. Using the morphism $Q \arrow K_{\bC} \arrow 1$, we obtain a representation $\sigma :  Q \arrow \Gl(V)$, which by restriction to $K \subset Q$ gives back the representation $\tau$. This procedure endows canonically $\cE(\tau)$ with a holomorphic structure. Note however that when $\tau : K \arrow \Gl(V)$ is the restriction to $K$ of a morphism $\rho : G \arrow \Gl(V)$, this holomorphic structure on $\cE(\tau)$ differs in general from the holomorphic structure on $\cE(\rho_{|K} )$.

\subsubsection{Canonical extensions of automorphic vector bundles}
Let $\tau : K \arrow \Gl(V)$ be a complex representation, and let $\cE(\tau)$ be the corresponding $G$-equivariant holomorphic vector bundle on $\cD$. Any choice of a $K$-invariant positive definite hermitian form on $V$ induces a hermitian metric $h$ on $\cE(\tau)$. Assume moreover that $(\mathbf{G}, u)$ comes from a rational Hodge datum of hermitian type $(\mathbf{G}_{\bQ}, u)$. For any torsion-free arithmetic subgroup $\Gamma \subset \mathbf{G}_{\bQ}(\bQ)$, the hermitian holomorphic vector bundle $(\cE(\tau),h)$ descends to the arithmetic locally symmetric variety $X = \Gamma \backslash \cD$. If $\overline{X}$ is a toroidal compactification of $X$, Mumford has shown that $\cE(\tau)$ admits a unique extension on $\overline{X}$ such that $h$ is a singular metric good on $\overline{X}$ (see \cite[section 1]{Mumford77} for the meaning of "good" and \cite[theorem 3.1]{Mumford77}). We call this extension Mumford's canonical extension of $\cE(\tau)$ to $\overline{X}$.

\subsection{The canonical automorphic line bundle}\label{The canonical automorphic line bundle}
Let $( \mathbf{G} , u)$ be a rational Hodge datum of hermitian type, and $\cD$ be the associated bounded symmetric domain. Choosing a basepoint $z$ of $\cD$, denote by $K \subset  \mathbf{G}(\bR)^+$ the centralizer of $u_z(U(1))$ in $ \mathbf{G}(\bR)^+$, so that $\cD =  \mathbf{G}(\bR)^+ / K$. When $\cD$ is moreover irreducible, the group of isomorphism classes of $1$-dimensional complex representations of $K$ is infinite cyclic, hence so is the group of isomorphism classes of $G$-equivariant line bundles on $\cD$. We denote by $\cL$  the generator such that the canonical bundle of $\cD$ is a positive power of $\cL$. When $\cD$ is reducible, one defines $\cL$ to be the tensor product of the $G$-equivariant line bundles obtained as before. For any torsion-free arithmetic subgroup $\Gamma \subset \mathbf{G}(\bQ)$, the holomorphic line bundle $\cL$ descends to the arithmetic locally symmetric variety $\Gamma \backslash \cD$, and has a canonical extension to any toroidal compactification of $\Gamma \backslash \cD$ that we still denote by $\cL$. 

\begin{theorem}\label{amplitude_L} Let $\overline{X}$ be a toroidal compactification of an arithmetic locally symmetric variety $X = \Gamma \backslash \cD$, and let $\cL$ be Mumford's canonical extension to $\overline{X}$ of the canonical automorphic line bundle.
Then $\cL$ is ample with respect to $X$.
\end{theorem}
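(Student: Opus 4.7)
The plan is to reduce the assertion to the classical theorem of Baily--Borel, which says that a sufficiently divisible power $\cL^m$ of the canonical automorphic line bundle descends to an \emph{ample} line bundle $\cM$ on the Baily--Borel compactification $\overline{X}^{BB}$. Indeed, $\overline{X}^{BB}$ is by construction $\mathrm{Proj}\!\left( \bigoplus_{k \geq 0} \HH^0(X, \cL^{mk}) \right)$ with $\cM \cong \cO(1)$, so this amounts to finite generation of the ring of automorphic forms, which is Baily--Borel's foundational result. The advantage of $\overline{X}^{BB}$ is that the ample line bundle is available there; the task transferred to $\overline{X}^{\Sigma}$ is then entirely that of comparing two extensions of $\cL^m|_X$ across the toroidal boundary.

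Next, I would exploit the canonical holomorphic map $\pi \colon \overline{X}^{\Sigma} \arrow \overline{X}^{BB}$ extending the identity on $X$, whose existence is recalled earlier in the section. The pullback $\pi^{*} \cM$ gives a second extension of $\cL^{m}|_X$ to $\overline{X}^{\Sigma}$, and the heart of the argument is the identification $\pi^{*} \cM \cong \cL^{m}$. For this I would invoke the uniqueness part of Mumford's canonical extension theorem: the natural automorphic (Petersson-type) hermitian metric on $\cM$ pulls back under $\pi$ to a hermitian metric on $\pi^{*} \cM$ which agrees with $h^{\otimes m}$ on $X$ (where $h$ is the automorphic metric characterizing Mumford's extension); one then verifies, using the explicit structure of the toroidal boundary charts together with the known asymptotics of the automorphic norm near the rational boundary components of $\overline{X}^{BB}$, that this pulled-back metric is \emph{good} in Mumford's sense on $\overline{X}^{\Sigma}$. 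Uniqueness then forces $\pi^{*} \cM \cong \cL^{m}$ as line bundles on $\overline{X}^{\Sigma}$.

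Once this identification is in place, ampleness with respect to $X$ follows immediately. For $N \gg 0$ the sections of $\cM^{N}$ realise $\overline{X}^{BB}$ as a closed subvariety of some $\bP^{r}$; pulling back, the morphism $\overline{X}^{\Sigma} \arrow \bP^{r}$ defined by a suitable subspace of $\HH^{0}(\overline{X}^{\Sigma}, \cL^{mN})$ factors as $\pi$ composed with this closed embedding. Since $\pi$ restricts to the identity on $X$, the composition is a closed embedding on $X$, which is precisely the statement that $\cL$ is ample with respect to $X$.

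The main technical obstacle is the goodness verification for the pulled-back automorphic metric: it requires simultaneous control of the geometry of $\overline{X}^{BB}$ near its (generally very singular) boundary and of the combinatorial structure of the toroidal boundary charts. This is however exactly the content of Mumford's analysis in \cite{Mumford77}, and does not pose a genuinely new difficulty in the setting at hand; all other steps of the proof are formal consequences of Baily--Borel's theorem and of the uniqueness of the canonical extension.
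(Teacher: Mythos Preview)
Your proposal is correct and follows essentially the same strategy as the paper: pull back the ample Baily--Borel line bundle along the canonical map $\pi\colon \overline{X}\to \overline{X}^{BB}$ and identify the result with a power of $\cL$. The only difference is in how this identification is made---the paper bypasses the good-metric argument by noting that a power of $\cL$ restricts to $\omega_X$, whose canonical extension is $\omega_{\overline{X}}(D)$, and then cites directly \cite[proposition~3.4]{Mumford77} for the equality $\omega_{\overline{X}}(D)=\pi^*\omega_{\overline{X}^{BB}}$; this is slightly more concrete than your route but amounts to the same content.
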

\begin{proof}
This is well-known and can be proved as follows in the irreducible case. Let $D := \overline{X} - X$. First observe that a positive tensor power of $\cL_{|X}$ is isomorphic to the canonical bundle $\omega_X$ of $X$ and that the canonical extension of $\omega_X$ to $\overline{X}$ is $\omega_X(D)$, hence it is sufficient to show that $\omega_X(D)$ is ample with respect to $X$. On the other hand, if we denote by $\overline{X}^{BB}$ the Baily-Borel compactification of $X$, then $\omega_X(D)$ is the pull-back of the canonical bundle $\omega_{\overline{X}^{BB}}$ of $\overline{X}^{BB}$ by the canonical map $\overline{X} \arrow \overline{X}^{BB}$, cf. \cite[proposition 3.4]{Mumford77}. But $\omega_{\overline{X}^{BB}}$  is known to be ample.
\end{proof}

\subsection{Automorphic $\bC$-PVHS}\label{automorphic C-PVHS}

Let $\mathbf{G}$ be a semi-simple real algebraic group whose adjoint group $\mathbf{G}^{ad}$ admits a real Hodge datum of hermitian type $(\mathbf{G}^{ad}, u)$. Let $\cD$ be the corresponding bounded symmetric domain. In this section, we describe briefly a procedure which associates to an irreducible finite-dimensional complex representation $\rho : \mathbf{G}(\bC) \arrow \Gl(V)$ a $\bC$-PVHS on $\cD$. For a complete treatment, we refer the reader to \cite[section 4]{Zucker81}.\\

The following elementary lemma rephrases the definition of a complex polarized Hodge structure in terms of representation theory.
\begin{lem} Let $V$ be a complex vector space of finite dimension endowed with a non-degenerate hermitian form $h$.
The data of a Hodge structure on $V$ polarized by $h$ is equivalent to the data of a morphism $u : \mathbf{U}(1) \arrow \mathbf{U}(V,h)$ such that $\ad (u(-1))$ is a Cartan involution of $\mathbf{U}(V,h)$. 
\end{lem}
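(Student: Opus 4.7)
My plan is a direct verification that the two packagings of data are equivalent. Going forward, given a Hodge structure $V = \bigoplus_p V^p$ polarized by $h$, I would define $u \colon \mathbf{U}(1) \to \mathbf{U}(V,h)$ by declaring $u(\lambda)$ to act on $V^p$ as multiplication by $\lambda^p$. The $h$-orthogonality of the Hodge decomposition combined with $|\lambda^p| = 1$ for $\lambda \in U(1)$ ensures $u(\lambda)$ preserves $h$, so $u$ indeed factors through $\mathbf{U}(V,h)$. Note that $u(-1)$ is the Weil operator $C$, so the expression $h_H(v,w) = h(u(-1)v, w)$ coincides with the Hodge metric of (\ref{Hodge metric}).

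Going backward, given $u \colon \mathbf{U}(1) \to \mathbf{U}(V,h)$, I would decompose $V$ into weight spaces $V^p := \{v \in V : u(\lambda)v = \lambda^p v \text{ for all } \lambda \in U(1)\}$, using that complex representations of $\mathbf{U}(1)$ are direct sums of characters. The $h$-invariance of $u$ combined with $\bar{\lambda} = \lambda^{-1}$ on $U(1)$ gives $\lambda^{p-q} h(v,w) = h(v,w)$ for $v \in V^p$, $w \in V^q$ and all $\lambda$, which forces $h(v,w) = 0$ whenever $p \neq q$; the decomposition is therefore $h$-orthogonal.

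It remains to match the Cartan condition with the sign-definiteness requirement of the polarization axiom. Since $u(-1)$ acts on $V^p$ as $(-1)^p$, we have $h_H(v,v) = (-1)^p h(v,v)$ for $v \in V^p$, so the polarization condition on $h$ is exactly positivity of $h_H$. My plan is then to identify the real form of $\mathbf{U}(V,h)_{\bC}$ associated to the involution $\ad(u(-1))$ as the unitary group $\mathbf{U}(V, h_H)$; this real form is compact if and only if $h_H$ is positive definite, which closes the equivalence.

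The main obstacle --- minor and essentially formal --- is this last identification of the real form corresponding to $\ad(u(-1))$: one has to unwind the definition of "Cartan involution" in terms of the twisted complex conjugation $g \mapsto u(-1)\,\bar{g}\,u(-1)^{-1}$ on $\mathbf{U}(V,h)(\bC)$, and recognize its fixed points as $\mathbf{U}(V, h_H)(\bR)$. Once this is in place, both directions of the equivalence follow from the observations above without further difficulty.
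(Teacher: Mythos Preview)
Your argument is correct and is the natural direct verification; the paper does not prove this lemma, describing it simply as elementary, so there is no proof to compare against. One small caveat worth noting: compactness of the twisted real form $\mathbf{U}(V,h_H)$ is equivalent to $h_H$ being \emph{definite}, not specifically positive definite. Thus in the backward direction a morphism $u$ with $\ad(u(-1))$ a Cartan involution may yield $h_H$ negative definite, in which case the associated grading is polarized by $-h$ rather than $h$; equivalently, one replaces $u$ by $\lambda\mapsto \lambda\, u(\lambda)$, which leaves $\ad(u(-1))$ unchanged while flipping the sign of $h_H$. This parity ambiguity is already implicit in the lemma as stated and is harmless for the use the paper makes of it.
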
 

The morphism $u : \mathbf{U}(1) \arrow \mathbf{G}$ permits to define a polarized complex Hodge structure on $V$ as follows. First, we obtain a decomposition $V = \bigoplus_{n \in \bZ} V(n)  $ by setting
\[   V(n) := \{ v \in V \, | \, \forall \lambda \in  \mathbf{U}(1)(\bC)=\bC^*, (\rho \circ u (\lambda))(v) = \lambda^n \cdot v \}.   \]

\begin{ex}\label{definition_of-mu}
When $\rho : \mathbf{G}(\bC) \arrow \Gl(\mathfrak{g})$ is the adjoint representation, we get three summands: $\mathfrak{p}^+ = V< \mu>, \mathfrak{k}_{|\bC} = V<0>, \mathfrak{p}^- = V< -\mu>$, where $\mu$ is a positive integer which is equal to $1$ when $\mathbf{G}$ is of adjoint type.
\end{ex}

Let $\mathbf{G}^c$ be the compact form of $\mathbf{G}_{\bC}$ corresponding to $C :=\ad (u(-1))$. Let $h_H$ be any $\mathbf{G}^c$-invariant hermitian positive definite form on $V$ (unique up to multiplication by non-zero constant) and set $h(x,y) = h_H(C . x, y)$ for every $x,y \in V$. It is a $\mathbf{G}$-invariant hermitian form on $V$. By construction, the morphism $u$ takes its values in $\mathbf{U}(V,h)$ and $\ad (u(-1))$ is a Cartan involution of $\mathbf{U}(V,h)$, hence it defines a Hodge structure on $V$ polarized by $h$. Moreover, as $h(x, g C g^{-1}y) = h(g^{-1}x,  C g^{-1}y)$ for any $g \in \mathbf{G}(\bR)$, $h$ is a polarization of any Hodge structure of $V$ associated to a morphism $\mathbf{U}(1) \arrow \mathbf{G}$ which is $\mathbf{G}(\bR)$-conjugated to $u$.\\

Any $z \in \cD$ corresponds to a morphism $u_z: \mathbf{U}(1) \arrow \mathbf{G}^{ad}$, which can be uniquely lifted to a morphism $u_z': \mathbf{U}(1) \arrow \mathbf{G}$. Moreover, $\ad (u_z(-1))$ is a Cartan involution of $\mathbf{G}^{ad}$ if and only if $\ad (u_z'(-1))$ is a Cartan involution of $\mathbf{G}$. By the preceding discussion, we get for any $z \in \cD$ a complex Hodge structure on $V$ polarized by $h$. When $ \mathbf{G} =  \mathbf{G}^{ad}$, or more generally when $\mu =1$ (cf. example \ref{definition_of-mu} for the definition of $\mu$), we define in this way a $\bC$-PVHS on the constant local system with fibre $V$ on $\cD$. However, if $\mu >1$, Griffiths transversality is not satisfied in general. This problem is addressed in \cite{Zucker81} by introducing the following ad hoc numbering (see also \cite[definition 4.2.4]{Klingler11} for a more canonical solution).
The set of integers $n \in \bZ$ such that $V(n) \neq 0$ is of the form $\{ \lambda, \lambda - \mu,  \lambda - 2 \mu, \cdots ,  \lambda - m\mu   \}$ for some integers $\lambda, m \geq 0$. 
Then define the Hodge decomposition $V = \bigoplus_{p \in \bN} V^p  $ by $V^p = V( \lambda - p\mu )$.
We obtain in this way a $\bC$-PVHS on $\cD$, endowed with a left action of $G$ which covers the $G$-action on $\cD$, hence descends to any quotient $\Gamma \backslash \cD$ by a torsion-free discrete subgroup $\Gamma$ of $G$. This $\bC$-PVHS is called the automorphic $\bC$-PVHS (or the locally homogeneous $\bC$-PVHS) associated to the representation $\rho : \mathbf{G}(\bC) \arrow \Gl(V)$. It is real whenever $\rho$ comes from a representation $\mathbf{G} \arrow \mathbf{Gl}(V_{\bR})$, where $V_{\bR}$ is a real form of $V$: $V = V_{\bR} \otimes \bC$.\\

Assume now that $(\mathbf{G}^{ad}, u)$ comes from a rational Hodge datum of hermitian type $(\mathbf{G}^{ad}_{\bQ}, u)$, and that $\Gamma \subset \mathbf{G}^{ad}_{\bQ}(\bQ)$ is an arithmetic subgroup. Let $\overline{X}$ be a smooth toroidal compactification of $X := \Gamma \backslash \cD$, and set $D := \overline{X} - X$. The automorphic $\bC$-PVHS associated to the representation $\rho : \mathbf{G}(\bC) \arrow \Gl(V)$ has unipotent monodromy around $D$. Let $\bV$ be its canonical extension to $\overline{X}$ as a log $\bC$-PVHS, and $(\EE, \theta)$ be the corresponding system of log Hodge bundles. The holomorphic vector bundle $\EE^p$ is canonically isomorphic to Mumford's canonical extension of $\EE_{|X}^p$ for every $p$. Moreover, as $K$ is the centralizer of $u(U(1))$ in $ \mathbf{G}(\bR)^+$, every $V^p$ is naturally endowed with an action of $K$, and the corresponding automorphic bundle is canonically isomorphic to $\EE^p_{|X}$. 

\section{Complex PVHS of Calabi-Yau type on bounded symmetric domains}\label{Calabi-Yau PVHS}

\begin{defn} Let $V$ be a finite dimensional complex vector space. A complex Hodge structure $V = \bigoplus_p V^p$ is said of Calabi-Yau type if it is effective (i.e. $V^p = 0$ for $p <0$) and $\dim V^n =1$, where $n$ is the biggest integer $p$ such that $V^p \neq 0$. We call $n$ the weight of the Hodge structure.
\end{defn}

The terminology is of course motivated by the complex Hodge structure on the degree $n$ cohomology group of a compact Calabi-Yau manifold of complex dimension $n$.
We define analogously $\bC$-PVHS of Calabi-Yau type.\\

Let $\cD$ be an irreducible bounded symmetric domain. Let $\mathbf{G}$ be the corresponding adjoint real algebraic linear group, $\mathbf{G}_{\bC}$ be its complexification and $\mathfrak{g}_{\bC}$ be the complex Lie algebra of $\mathbf{G}_{\bC}(\bC) = \mathbf{G}(\bC)$. Let $\mathfrak{h} \subset \mathfrak{b} \subset \mathfrak{g}_{\bC}$ be a Cartan subalgebra contained in a Borel subalgebra, and let $\Delta = \{\alpha_1, \cdots, \alpha_r \}$ be the corresponding set of simple roots of $\mathfrak{h}$ and $\Delta^{\vee} = \{\alpha_1^{\vee}, \cdots, \alpha_r^{\vee} \}$ the corresponding set of simple coroots. Writing the highest root as a linear combination of the simple roots $\alpha = \sum_{1 \leq i \leq r} n_i \alpha_i$, we say that a simple root $\alpha_i$ is special if $n_i =1$. As explained in \cite[♦section 1.2]{Deligne77}, the irreducible bounded symmetric domains such that the corresponding adjoint real algebraic group has complexification $\mathbf{G}_{\bC}$ are canonically in bijection with the special roots in $\Delta$.
If $\alpha_i $ is the special root associated to $\cD$, then we define the corresponding cominuscule weight $\omega_i$ by the conditions $\omega_i(\alpha^{\vee}_j) = \delta_{ij}$. The corresponding fundamental representation of the algebraic universal cover of $\mathbf{G}_{\bC}$ with highest weight $\omega_i$ is called the cominuscule representation. It determines an irreducible $\mathbf{G}(\bR)^+$-equivariant $\bC$-PVHS on $\cD$ by the procedure described in section \ref{automorphic C-PVHS}, which has been studied by Gross \cite{Gross94} when $\cD$ is a tube domain and by Sheng-Zuo \cite{Sheng-Zuo10} in the general case. 

\begin{theorem}[Gross \cite{Gross94}, Sheng-Zuo \cite{Sheng-Zuo10}]
Let $\cD $ be an irreducible bounded symmetric domain. The cominuscule representation determines an irreducible $\mathbf{G}(\bR)^+$-equivariant $\bC$-PVHS of Calabi-Yau type on $\cD$ of weight $n = \rk \cD$ that we denote $\bV_{\cD}$. 
\end{theorem}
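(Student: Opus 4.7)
The existence of the $\mathbf{G}(\bR)^+$-equivariant $\bC$-PVHS is an immediate application of the procedure described in section \ref{automorphic C-PVHS} to the representation $V_{\omega_i}$; its irreducibility as a variation follows from the irreducibility of $V_{\omega_i}$ as a $\mathbf{G}_{\bC}$-representation. What remains to show is that this PVHS is of Calabi--Yau type and that its weight equals $\rk \cD$.

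First I would dispense with the Calabi--Yau conditions. Effectivity is automatic: the renumbering procedure of section \ref{automorphic C-PVHS} defines $V^0$ to be the top nonzero $u$-eigenspace, so $V^p = 0$ for $p<0$. Since $V_{\omega_i}$ is irreducible, its highest (resp.\ lowest) $u$-eigenspace coincides with the highest (resp.\ lowest) weight space for a Borel adapted to $u$, and both are one-dimensional. This gives $\dim V^0 = 1$ and, once the weight of the Hodge structure is identified, $\dim V^n = 1$.

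The core of the proof is thus the computation of the number of nonzero $u$-eigenspaces of $V_{\omega_i}$, which after renumbering equals the weight plus one. The cocharacter $u : \mathbf{U}(1) \to \mathbf{G}$ corresponds, in the identification of $\cD$ as a conjugacy class, to the cominuscule coweight $\omega_i^\vee$ characterized by $\alpha_j(\omega_i^\vee) = \delta_{ij}$; the integer eigenvalues of $u$ on $V_{\omega_i}$ are then the pairings of the weights of $V_{\omega_i}$ with $\omega_i^\vee$. To evaluate the range of these pairings I would invoke Harish-Chandra's classical construction of $r := \rk \cD$ strongly orthogonal noncompact positive roots $\gamma_1, \ldots, \gamma_r \in \mathfrak{p}^+$: since each $\gamma_k$ has $u$-weight $+1$, the vectors
\[ v_k := E_{-\gamma_k} E_{-\gamma_{k-1}} \cdots E_{-\gamma_1} \cdot v_{\omega_i}, \quad k = 0, 1, \ldots, r,\]
are nonzero (by strong orthogonality and cominuscule weight) and realize $u$-eigenvalues that decrease by exactly $1$ at each step, starting from $\omega_i(\omega_i^\vee)$ and reaching the minimum at $k = r$. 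Since $v_r$ spans the lowest weight line of $V_{\omega_i}$, no further lowering is possible, so the total number of $u$-eigenvalues is exactly $r+1$. Applying the renumbering, the weight of the resulting Hodge structure is $n = r = \rk \cD$.

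The main obstacle is verifying that the descent of the cocharacter $u$ to the adjoint group coincides (up to the ambiguity captured by the integer $\mu$ of section \ref{automorphic C-PVHS}) with $\omega_i^\vee$ and that the successive lowering by strongly orthogonal roots really exhausts the spectrum. This is a purely Lie-theoretic computation carried out uniformly via Jordan triple systems in the tube case by Gross \cite{Gross94} and extended to the non-tube case by Sheng--Zuo \cite{Sheng-Zuo10}, and for the statement of the theorem it suffices to cite their results.
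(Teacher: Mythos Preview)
The paper does not prove this theorem: it is stated as a result of Gross \cite{Gross94} and Sheng--Zuo \cite{Sheng-Zuo10} and used as a black box, so there is no ``paper's own proof'' to compare your proposal against. Your final sentence already acknowledges this, and for the purposes of the present paper a bare citation is exactly what is expected.

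That said, since you have written out a sketch, a few remarks on its internal soundness. Your claim that the highest $u$-eigenspace ``coincides with the highest weight space for a Borel adapted to $u$'' is not quite the right justification: the top $u$-eigenspace is a priori a representation of the Levi $K_{\bC}$, and what makes it one-dimensional is that $\omega_i$ restricts trivially to the semisimple part of the Levi (because $\omega_i(\alpha_j^\vee)=0$ for $j\neq i$). Your strongly-orthogonal-roots argument for the weight is the right idea and is essentially Harish-Chandra's polydisc embedding, but the assertion that $v_r$ already spans the \emph{lowest} weight line --- i.e.\ that $r$ lowerings suffice and no more are possible --- is precisely the nontrivial content; it uses the cominuscule hypothesis in an essential way and is where the actual work in \cite{Gross94, Sheng-Zuo10} lies. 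So your outline is a fair roadmap, but the steps you label as routine are the ones that genuinely require the cited references.
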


\begin{theorem}[Friedman-Laza \cite{Friedman-Laza13}]\label{classification}
Let $\cD $ be an irreducible bounded symmetric domain. For any $n \geq 1$, $S^n \bV_{\cD}$ contains a unique irreducible $\mathbf{G}(\bR)^+$-equivariant $\bC$-PVHS of Calabi-Yau type of weight $  n \cdot \rk \cD$.\\
Conversely, any irreducible $\mathbf{G}(\bR)^+$-equivariant $\bC$-PVHS of Calabi-Yau type on $\cD$ is obtained by this procedure.
\end{theorem}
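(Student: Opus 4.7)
The plan is to translate the classification of irreducible $\mathbf{G}(\bR)^+$-equivariant $\bC$-PVHS of Calabi--Yau type into a problem about irreducible complex representations of $\mathbf{G}_{\bC}$ (or its algebraic universal cover), and then exploit the cominuscule structure at the special root $\alpha_i$ to pin down the admissible highest weights.

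First, by the correspondence recalled in section \ref{automorphic C-PVHS}, an irreducible $\mathbf{G}(\bR)^+$-equivariant $\bC$-PVHS on $\cD$ is determined by an irreducible complex representation $V_\lambda$ of highest weight $\lambda$, with Hodge grading induced by the cocharacter $u$ after Zucker's renumbering. The resulting Hodge weight equals the span $\max \langle \mu, u\rangle - \min \langle \mu, u\rangle$ over the weights $\mu$ of $V_\lambda$, and the Calabi--Yau condition is equivalent to asking that the top $u$-weight subspace $V_\lambda^{\mathrm{top}}$ be one-dimensional.

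Next I would compute $V_\lambda^{\mathrm{top}}$. Since $\alpha_i$ is cominuscule ($n_i = 1$), the cocharacter $u$ is, up to a positive scalar, the fundamental coweight $\omega_i^\vee$, and the $u$-weight of a weight $\mu = \lambda - \sum_j c_j \alpha_j$ depends only on $c_i$. Consequently $V_\lambda^{\mathrm{top}}$ is spanned by weight vectors with $c_i = 0$, and it forms an irreducible representation of the Levi $K_\bC$ of highest weight $\lambda$. Being one-dimensional means $\lambda$ is a character of $K_\bC$, equivalently $\langle \lambda, \alpha_j^\vee \rangle = 0$ for every $j \neq i$, forcing $\lambda = n\omega_i$ for some integer $n \geq 1$. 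A direct computation then yields the PVHS weight of $V_{n\omega_i}$: the span of $u$-weights on $\bV_\cD = V_{\omega_i}$ equals $\rk\cD$ by Gross--Sheng--Zuo, and since the highest (resp. lowest) weight of $V_{n\omega_i}$ is $n$ times that of $V_{\omega_i}$, this span scales to $n \cdot \rk \cD$. This gives the converse part of the theorem.

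Finally, for the first part: the weight $n \omega_i$ is the highest weight of $S^n V_{\omega_i}$ and occurs with multiplicity one, so $V_{n\omega_i}$ embeds as the Cartan component of $S^n \bV_\cD$. Uniqueness follows because any irreducible sub-PVHS of CY type of weight $n \cdot \rk \cD$ inside $S^n \bV_\cD$ must, by the classification above, be isomorphic to $V_{m \omega_i}$ with $m \cdot \rk \cD = n \cdot \rk \cD$, hence $m = n$, and the multiplicity-one statement forces it to coincide with the Cartan component. The principal obstacle will be the step identifying $V_\lambda^{\mathrm{top}}$ as an irreducible $K_\bC$-module of highest weight $\lambda$: this rests essentially on the cominuscule property, which ensures that the parabolic $Q$ has abelian unipotent radical with $u$-weight $\pm 1$, so that the $u$-grading coincides with the $\alpha_i$-height filtration and the top graded piece is a single irreducible Levi representation.
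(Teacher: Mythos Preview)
The paper does not supply a proof of this theorem; it is quoted as a result of Friedman--Laza \cite{Friedman-Laza13} and used as a black box. So there is no in-paper argument to compare against, and your sketch is in fact furnishing what the paper omits.

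Your approach is the natural one and matches the representation-theoretic argument in the cited source: reduce the Calabi--Yau condition to the one-dimensionality of the top $u$-graded piece, identify that piece as an irreducible $K_{\bC}$-module of highest weight $\lambda$ (using that $Q$ has abelian unipotent radical because $\alpha_i$ is cominuscule), conclude $\lambda = n\omega_i$, and then recognise $V_{n\omega_i}$ as the Cartan component of $S^n V_{\omega_i}$. The uniqueness argument via multiplicity one of the highest weight $n\omega_i$ in $S^n V_{\omega_i}$ is also correct.

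Two small points you should tighten. First, when you compute the Hodge weight of $V_{n\omega_i}$ as $n\cdot\rk\cD$, remember that the Hodge grading in section~\ref{automorphic C-PVHS} involves Zucker's renumbering by the integer $\mu$ from example~\ref{definition_of-mu}; since the cominuscule representation is a representation of the simply connected cover rather than the adjoint group, $\mu$ need not be $1$, so the Hodge weight is the span of $u$-weights divided by $\mu$. Your scaling argument still goes through because both the span and $\mu$ are computed with the same lift of $u$, but it is worth saying explicitly. Second, the claim that $V_\lambda^{\mathrm{top}}$ is an \emph{irreducible} $K_{\bC}$-module (rather than merely a $K_{\bC}$-module) deserves a one-line justification: any proper $K_{\bC}$-submodule of $V_\lambda^{\mathrm{top}}$ would generate, under the action of $\mathfrak{p}_-$, a proper $\mathfrak{g}_{\bC}$-submodule of $V_\lambda$, contradicting irreducibility.
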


Therefore, for every $n \geq 1$, there is a unique irreducible $\mathbf{G}(\bR)^+$-equivariant $\bC$-PVHS of Calabi-Yau type of weight $n \cdot \rk \cD$ on $\cD$. The one of minimal weight is $\bV_{\cD}$.

\begin{theorem}[Gross \cite{Gross94}]
Let $\cD $ be an irreducible bounded symmetric domain. If $(\EE, \theta)$ denotes the $\mathbf{G}(\bR)^+$-equivariant system of Hodge bundles which corresponds to the unique irreducible $\mathbf{G}(\bR)^+$-equivariant $\bC$-PVHS of Calabi-Yau type of weight $ w = \rk \cD$, then $\EE^w \simeq \cL$ as $\mathbf{G}(\bR)^+$-equivariant line bundles. Consequently, for any $n \geq 1$, if $(\EE, \theta)$ denotes the $\mathbf{G}(\bR)^+$-equivariant system of Hodge bundles which corresponds to the unique irreducible $\mathbf{G}(\bR)^+$-equivariant $\bC$-PVHS of Calabi-Yau type of weight $ w = n \cdot \rk \cD$, then $\EE^w \simeq \cL^{\otimes \rk \cD }$ as $\mathbf{G}(\bR)^+$-equivariant line bundles.
\end{theorem}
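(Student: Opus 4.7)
The plan is to identify both $\EE^w$ and $\cL$ as characters of the isotropy group $K$ of a basepoint $z_0 \in \cD$, using the standard correspondence between $\mathbf{G}(\bR)^+$-equivariant holomorphic line bundles on $\cD = \mathbf{G}(\bR)^+/K$ and one-dimensional holomorphic representations of $K$, and then to match these two characters explicitly.

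First I would analyze $\EE^w$. By the construction of the automorphic $\bC$-PVHS in section \ref{automorphic C-PVHS}, the fiber $\EE^w_{z_0}$ is the highest-weight eigenspace of $\rho \circ u'$ acting on $V$, where $\rho$ is the cominuscule representation with highest weight $\omega_i$ and $u'$ is the lift of $u$ to a cocharacter of $\mathbf{G}$. Because $\alpha_i$ is cominuscule, the cocharacter $u'$ is proportional to the fundamental coweight $\omega_i^\vee$ (it must pair to $1$ with each non-compact positive root $\beta$, and non-compact positive roots are exactly those $\beta \in \Phi^+$ with $n_i(\beta)=1$). The highest weight vector $v_{\omega_i}$ is annihilated by $\mathfrak{p}^+$, hence lies in the top eigenspace of $u'$, which is one-dimensional by the Calabi-Yau property. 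Therefore $\EE^w_{z_0} = \bC \cdot v_{\omega_i}$ as $K$-representation, so $\EE^w$ corresponds to the character $\omega_i|_K$.

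Next I would analyze $\cL$. By definition it is the generator of the group of isomorphism classes of $\mathbf{G}(\bR)^+$-equivariant line bundles on $\cD$ such that the canonical bundle $\omega_\cD$ is a positive power of it. As a $K$-representation, $\omega_\cD|_{z_0} = \det((\mathfrak{p}^+)^*)$, whose $K$-character is $-\sum_{\beta \in \Phi^+_{nc}} \beta|_K$. In the irreducible cominuscule situation, each $\beta \in \Phi^+_{nc}$ restricts to the center $Z(K)^0 = u(U(1))$ as the same positive generator (since $\langle \beta, u'_* \rangle = 1$ for all such $\beta$), and the restriction map from the character lattice of $K$ to that of $Z(K)^0$ has image generated by $\omega_i|_{Z(K)^0}$. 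Hence $\sum_{\beta \in \Phi^+_{nc}} \beta|_{Z(K)^0}$ is a positive multiple of $\omega_i|_{Z(K)^0}$, which after comparing signs and using that $\cL$ is the \emph{generator} in the direction making $\omega_\cD$ a positive power, identifies $\cL$ with the character $\omega_i|_K$ (possibly after replacing $\omega_i$ by its dual, a sign convention that is forced by the definition of $\cL$). Combining with the previous paragraph, $\EE^w \cong \cL$ as $\mathbf{G}(\bR)^+$-equivariant line bundles.

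For the second assertion, by Theorem \ref{classification} the unique irreducible Calabi-Yau type sub-$\bC$-PVHS of weight $n \cdot \rk \cD$ embeds in $S^n \bV_\cD$. The top Hodge piece $\EE^w$ of this sub-PVHS is therefore a line contained in the top Hodge piece of $S^n \bV_\cD$, which (since the top piece of $\bV_\cD$ is a line) equals $(\EE^w_{\bV_\cD})^{\otimes n} \cong \cL^{\otimes n}$ by the first part. Since both are line bundles, this inclusion is an isomorphism. The main technical obstacle is the precise combinatorial verification in the identification of $\cL$: one must check that in every irreducible cominuscule case the sum $\sum_{\beta \in \Phi^+_{nc}} \beta|_{Z(K)^0}$ is indeed a positive multiple of $\omega_i|_{Z(K)^0}$ and that $\omega_i$ itself (and not a proper multiple) is the primitive generator under restriction to $Z(K)^0$; this can be done uniformly using the structure theory of Hermitian symmetric spaces, or case-by-case using the classification.
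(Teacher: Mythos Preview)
The paper does not supply its own proof of this theorem: it is stated with attribution to Gross \cite{Gross94} and used as input, like the other results in section~\ref{Calabi-Yau PVHS}. So there is nothing to compare your argument against.

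That said, your approach---reducing both sides to one-dimensional representations of $K$ via the equivalence between $\mathbf{G}(\bR)^+$-equivariant holomorphic line bundles on $\cD$ and characters of $K$, and then identifying the character attached to the highest-weight line of the cominuscule representation with the generator used to define $\cL$---is exactly the natural line of argument and is essentially how Gross proves it. Two remarks are in order. First, in the consequence for general $n$ you obtain $\EE^w \simeq \cL^{\otimes n}$, whereas the displayed statement reads $\cL^{\otimes \rk \cD}$; your conclusion is the correct one (the exponent must depend on $n$), and the printed exponent is evidently a slip. Second, be careful about the group in which $K$ sits: the cominuscule representation is a representation of the simply connected cover of $\mathbf{G}_{\bC}$, while $\cL$ is defined in the paper via characters of the isotropy subgroup of the \emph{adjoint} group $\mathbf{G}(\bR)^+$. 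The weight $\omega_i$ generates the rank-one lattice of $W_K$-invariant weights for the simply connected group, but the primitive character of the adjoint $K$ may correspond to a proper multiple of $\omega_i$ (already for $\cD=\Delta$ one has $\cL \leftrightarrow 2\omega_1$). So the identification $\EE^w\simeq\cL$ ``as $\mathbf{G}(\bR)^+$-equivariant line bundles'' should be read with the equivariance taken for the appropriate cover on which both objects live; your argument goes through cleanly once this is made explicit.
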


\begin{cor}
Let $\cD $ be a (non necessarily irreducible) bounded symmetric domain. For every $n \geq 1$, there exists an irreducible automorphic $\bC$-PVHS of Calabi-Yau type of weight $ w = n \cdot \rk \cD$. If $(\EE, \theta)$ denotes the corresponding automorphic system of Hodge bundles, then $\EE^w \simeq L^{\otimes \rk \cD }$ as automorphic line bundles.
\end{cor}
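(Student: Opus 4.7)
The plan is to reduce to the irreducible case already handled by the preceding theorem, by decomposing $\cD = \prod_{i=1}^r \cD_i$ into its irreducible factors. Correspondingly, the adjoint real algebraic group decomposes as $\mathbf{G} = \prod_i \mathbf{G}_i$, the rank is additive, $\rk \cD = \sum_i \rk \cD_i$, and by the definition of $\cL$ for reducible $\cD$ given in section \ref{The canonical automorphic line bundle}, the canonical automorphic line bundle factors as $\cL = \bigotimes_i p_i^* \cL_i$, where $p_i : \cD \arrow \cD_i$ is the projection and $\cL_i$ is the canonical automorphic line bundle on $\cD_i$.

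For each $i$, I would apply the preceding theorem on $\cD_i$ with the same integer $n$: this provides a unique irreducible $\mathbf{G}_i(\bR)^+$-equivariant $\bC$-PVHS $\bV_i$ of Calabi-Yau type on $\cD_i$ of weight $w_i = n \cdot \rk \cD_i$, whose top Hodge bundle $\EE_i^{w_i}$ is identified (by the formula of the preceding theorem) with an appropriate tensor power of $\cL_i$. I would then form the external tensor product $\bV := \boxtimes_{i=1}^r p_i^* \bV_i$: this is a $\mathbf{G}(\bR)^+$-equivariant $\bC$-PVHS on $\cD$ of total weight $w = \sum_i w_i = n \cdot \rk \cD$; it is irreducible as an outer tensor product of irreducible representations of the distinct factors $\mathbf{G}_i$; and its top Hodge piece is a tensor product of one-dimensional spaces, hence one-dimensional, so $\bV$ is of Calabi-Yau type.

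The top Hodge bundle of $\bV$ is the external tensor product of the top Hodge bundles of the $\bV_i$, namely
\[ \EE^w \simeq \bigotimes_i p_i^* \, \EE_i^{w_i}, \]
and combining this with $\cL = \bigotimes_i p_i^* \cL_i$ yields the claimed identification of $\EE^w$ as an appropriate tensor power of $\cL$. Because the whole construction is $\mathbf{G}(\bR)^+$-equivariant, descending along the quotient map $\cD \arrow \Gamma \backslash \cD$ for any torsion-free arithmetic lattice $\Gamma \subset \mathbf{G}(\bQ)$ produces the desired automorphic $\bC$-PVHS on $\Gamma \backslash \cD$, together with the stated identification of automorphic line bundles.

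The main obstacle is essentially bookkeeping: one must verify that external tensor products of $\bC$-PVHS satisfy Griffiths transversality and inherit the Calabi-Yau property from their factors (both of which split naturally along the factors of $\cD$), and then match the resulting tensor power of the $p_i^* \cL_i$ with the claimed tensor power of $\cL$. Once these compatibilities are unwound, the corollary follows directly from the irreducible case.
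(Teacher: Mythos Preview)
The paper states this corollary without proof; your reduction to the irreducible factors via external tensor products is exactly the intended argument and is correct in outline.

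One bookkeeping point is worth making explicit, since you left the exponent as ``an appropriate tensor power''. The exponent $\rk\cD$ appearing in both the preceding theorem and this corollary is evidently a misprint for $n$: for $n=1$ the first sentence of Gross's theorem gives $\EE^w\simeq\cL$, not $\cL^{\otimes\rk\cD}$, and in the proof of Proposition~\ref{main_construction} the paper uses $\EE^w=\cL^{\otimes l}$ with $l$ playing the role of $n$. With the corrected formula $\EE_i^{w_i}\simeq\cL_i^{\otimes n}$ on each factor, your computation
\[
\EE^w \;\simeq\; \bigotimes_i p_i^*\cL_i^{\otimes n} \;=\; \Bigl(\bigotimes_i p_i^*\cL_i\Bigr)^{\otimes n} \;=\; \cL^{\otimes n}
\]
goes through cleanly. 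With the exponent as printed it would not, since $\bigotimes_i p_i^*\cL_i^{\otimes\rk\cD_i}$ differs from $\cL^{\otimes\rk\cD}=\bigotimes_i p_i^*\cL_i^{\otimes\rk\cD}$ whenever the irreducible factors have unequal ranks. This is a defect of the statement, not of your argument.
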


\begin{prop}[Gross {\cite[proposition 5.2]{Gross94}}]\label{BTT}
Let $\cD $ be an irreducible bounded symmetric domain. If $(\EE, \theta)$ denotes the $\mathbf{G}(\bR)^+$-equivariant system of Hodge bundles which corresponds to the unique irreducible $\mathbf{G}(\bR)^+$-equivariant $\bC$-PVHS of Calabi-Yau type of weight $ w = \rk \cD$, then the induced $\phi_1 : T_{\cD} \arrow \Hom(\EE^n, \EE^{n-1})$ is an isomorphism of $\mathbf{G}(\bR)^+$-equivariant vector bundles.
\end{prop}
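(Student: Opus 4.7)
The plan is to reduce the statement to a representation-theoretic assertion at a single point of $\cD$ and then use Schur's lemma. Since $\phi_1$ is by construction a morphism of $\mathbf{G}(\bR)^+$-equivariant vector bundles on the homogeneous space $\cD = \mathbf{G}(\bR)^+/K$, it is an isomorphism on $\cD$ if and only if it is an isomorphism on the fiber over the base point $z_0$. Under the identifications $T_{z_0} \cD \cong \mathfrak{p}^+$ (as a $K$-representation, cf.\ the example following the definition of automorphic bundles) and $(\EE^p)_{z_0} \cong V^p$, the map $\phi_1$ at $z_0$ becomes the $K$-equivariant linear map
\[
\phi_1 : \mathfrak{p}^+ \longrightarrow \Hom(V^n, V^{n-1})
\]
induced by restricting the $\mathfrak{g}_{\bC}$-action on $V$ (the cominuscule representation) to $\mathfrak{p}^+ \otimes V^n \arrow V^{n-1}$.

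The next step is to recall two irreducibility facts for $K$-representations: first, $\mathfrak{p}^+$ is an irreducible $K$-module because $\cD$ is irreducible (equivalently, $\mathbf{G}$ is simple), and second, each $u$-weight piece $V^p$ of the cominuscule representation is an irreducible $K$-module. The latter is a well-known structural feature of cominuscule representations and underlies the Calabi--Yau type property exploited by Gross and Sheng--Zuo. Combined with $\dim V^n = 1$, it implies that $\Hom(V^n, V^{n-1}) \simeq V^{n-1} \otimes (V^n)^{\vee}$ is irreducible as a $K$-module as well.

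Then I would check that $\phi_1$ is nonzero. Pick $0 \neq v \in V^n$. Since $V^n$ is the top Hodge piece, $\mathfrak{p}^-$ sends $V^n$ into $V^{n+1} = 0$, while $\mathfrak{k}_{\bC}$ preserves $V^n$. If $\mathfrak{p}^+$ also annihilated $v$, then $\mathbb{C} \cdot v$ would be a $\mathfrak{g}_{\bC}$-submodule of $V$, contradicting the irreducibility of the cominuscule representation. Hence the image $\mathfrak{p}^+ \cdot v \subset V^{n-1}$ is nonzero, so $\phi_1 \neq 0$. By Schur's lemma applied to the $K$-equivariant nonzero map between irreducible $K$-modules $\mathfrak{p}^+$ and $\Hom(V^n, V^{n-1})$, the kernel (a proper $K$-submodule of $\mathfrak{p}^+$) must vanish and the image (a nonzero $K$-submodule of $\Hom(V^n, V^{n-1})$) must be everything, so $\phi_1$ is an isomorphism.

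\textbf{Main obstacle.} The subtle input is the $K$-irreducibility of the individual Hodge pieces $V^p$, which is a special property of cominuscule representations rather than of arbitrary irreducible $\mathbf{G}(\bC)$-representations; it is what allows Schur's lemma to conclude. Everything else is formal manipulation once this fact is invoked.
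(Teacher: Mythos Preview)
The paper gives no proof of this proposition; it is stated with attribution to Gross and a bare citation. Your argument is correct and follows the natural approach (which is essentially Gross's): reduce by $\mathbf{G}(\bR)^+$-equivariance to the fiber over the base point, where $\phi_1$ becomes a $K$-equivariant linear map between two irreducible $K$-modules, and conclude by Schur's lemma once nonvanishing is checked. One minor remark: you only need the $K$-irreducibility of $V^{n-1}$, not of every $V^p$, since $\dim V^n = 1$ makes $\Hom(V^n,V^{n-1})$ a character twist of $V^{n-1}$; you have correctly singled this out as the nonformal input coming from the structure of cominuscule representations.
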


\section{Differential forms with logarithmic poles}\label{Differential forms with logarithmic poles}

\subsection{The logarithmic cotangent bundle}

\begin{defn}
If $(X,D)$ is a smooth log pair, its logarithmic cotangent sheaf $ \Omega^1_{X}(\log D)$ is the $\cO_X$-module whose sections on an open subset $U \subset X$ are the holomorphic $1$-forms $\omega$ on $U - D$ such that $\omega$ and $d \omega$ have at most
a simple pole along $D \cap U$ (cf. \cite[II.3]{Deligne70}; see also \cite[chapter 11]{Iitaka_book}).
\end{defn}

If $X$ is a smooth algebraic variety of dimension $n$, then a coordinate neighborhood of a point $x \in X$ is an affine open neighborhood $U$ of $x$ in $X$ together with functions $ z_1, \cdots, z_n  \in \cO_X(U)$ vanishing at $x$ such that ${\Omega_X^1}_{|U} = \bigoplus_{1 \leq i \leq n} \cO_U \cdot dz_i$. The $n$-tuple $( z_1, \cdots, z_n )$ is called a local coordinate system at $x$.

If $(X,D)$ is a smooth log pair and $U$ is a coordinate neighborhood of $x \in X$ with local coordinate system $( z_1, \cdots, z_n )$ such that $D = \cup_{1 \leq i \leq r}\{ z_i =0 \}$ around $x$, then $ \Omega^1_{X}(\log D)_{|U}$ is the $\cO_U$-submodule of $ \Omega^1_{U} \otimes_{\cO_U} \cO_U (D)$ generated by the $\frac{dz_i}{z_i}$ ($1 \leq i \leq r$) and the $dz_j$ ($r < j \leq \dim X$). In particular, it follows that the $\cO_X$-module $ \Omega^1_{X}(\log D)$ is locally-free.

Note that any morphism of smooth log pairs $f : (Y,E) \arrow (X,D)$ induces a pull-back morphism of $\cO_X$-modules $f^* : \Omega^1_{X}(\log D) \arrow f_* (  \Omega^1_{Y}(\log E)) $.

\begin{lem}\label{log-cotangent_modification} Let $\mu : (X',D') \arrow (X,D)$ be a morphism of smooth log pairs such that the underlying map $\mu : X' \arrow X$ is proper and birational. Then the pull-back morphism
$\mu^* : \Omega^1_{X}(\log D) \arrow \mu_* (  \Omega^1_{X'}(\log D')) $ is an isomorphism of $\cO_X$-modules.
\end{lem}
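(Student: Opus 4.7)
The plan is to reduce to the classical identity $\mu_*\Omega^1_{X'}=\Omega^1_X$ for proper birational morphisms between smooth varieties, adapting the standard argument to accommodate logarithmic poles; the statement being local on $X$, I would argue in a neighborhood of an arbitrary point $x\in X$. The pullback $\mu^*$ is well-defined as a map into the target: in local coordinates $(z_1,\dots,z_n)$ at $x$ adapted to $D=\{z_1\cdots z_r=0\}$, the generators $dz_i/z_i$ (for $i\le r$) of $\Omega^1_X(\log D)$ pull back to $d(\mu^*z_i)/\mu^*z_i$, with logarithmic poles along $\mu^{-1}(D)\subset D'$. Injectivity is immediate: $\mu$ restricts to an isomorphism over a dense open subset $V\subset X$, and both $\Omega^1_X(\log D)$ and $\mu_*\Omega^1_{X'}(\log D')$ are torsion-free $\cO_X$-modules, so a section killed by $\mu^*$ must vanish on $V$ and hence everywhere.

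The heart of the argument is surjectivity, which I would establish via a Hartogs-type extension. By Zariski's Main Theorem, since $X$ is smooth (hence normal) and $\mu$ is proper birational, the complement $Z:=X\setminus V$ has codimension $\ge 2$ in $X$. Given a section $\omega\in\Gamma(U,\mu_*\Omega^1_{X'}(\log D'))$ over an open $U\ni x$, its restriction to $\mu^{-1}(V\cap U)$ corresponds under the isomorphism $\mu^{-1}(V)\simeq V$ to a section $\eta\in\Gamma(V\cap U,\Omega^1_X(\log D))$. Because $\Omega^1_X(\log D)$ is locally free, hence reflexive, and $U\setminus V$ has codimension $\ge 2$ in $U$, the section $\eta$ extends uniquely to $\tilde\eta\in\Gamma(U,\Omega^1_X(\log D))$. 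Then $\mu^*\tilde\eta$ and $\omega$ coincide on the dense open $\mu^{-1}(V\cap U)\subset\mu^{-1}(U)$, hence everywhere by torsion-freeness of $\Omega^1_{X'}(\log D')$, giving surjectivity.

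The main technical point — the step where the precise setup really enters — is the identification $\mu_*\Omega^1_{X'}(\log D')|_V\simeq\Omega^1_X(\log D)|_V$ used above: it requires the log structures of $D$ and $D'$ to coincide on the isomorphism locus. This is automatic whenever the components of $D'$ not lying in $\mu^{-1}(D)$ are exceptional for $\mu$, which is the situation in which this lemma will be applied (for instance, when $D'$ is $\mu^{-1}(D)_{\mathrm{red}}$ together with exceptional divisors in a toroidal or log modification).
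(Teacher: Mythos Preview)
Your argument is correct and matches the paper's proof essentially step for step: injectivity is immediate, and surjectivity is obtained by restricting a section to the isomorphism locus $X\setminus Z$ and extending back by Hartogs (you phrase this via reflexivity of the locally free sheaf and Zariski's Main Theorem for the codimension bound, while the paper simply invokes ``Hartogs' phenomenon''). Your closing remark --- that the identification over the isomorphism locus tacitly requires the components of $D'$ outside $\mu^{-1}(D)$ to be $\mu$-exceptional --- is a valid point of care that the paper's proof leaves implicit.
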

\begin{proof}
The injectivity of $\mu^*$ is clear. Let us show the surjectivity. Denote by $Z$ the smallest closed subset of $X$ such that $\mu$ induces an isomorphism $X' - \mu^{-1} (Z) \xrightarrow{\sim} X - Z$. Let $U \subset X$ be an open subset and $s$ be a section of $ \Omega^1_{X'}(\log D')$ on $\mu^{-1} (U)$. Restricting $s$ to $ \mu^{-1}(U) - \mu^{-1} (Z)$, we get by the isomorphism above a section $t$ of $ \Omega^1_{X}(\log D)$ defined on $U - Z$. By Hartogs' phenomenon, the section $t$ extends uniquely to a section on $U$ whose pull-back is the section $s$ we began with (as they are generically equal).   
\end{proof}

\begin{lem}\label{log-cotangent_subvariety}
For any morphism of smooth log pairs $f:(Y,E) \arrow (X,D)$ such that the underlying morphism $f$ is proper and maps birationally $Y$ onto its image, the pull-back morphism of $\cO_X$-modules $\Omega^1_{X}(\log D) \xrightarrow{f^*} f_*  \Omega^1_{Y}(\log E)$
is surjective.
\end{lem}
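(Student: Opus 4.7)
The plan is to reduce, via Hironaka's embedded resolution of singularities together with Lemma \ref{log-cotangent_modification}, to the model case of a transverse closed embedding of smooth log pairs, where the statement follows from an elementary local computation (the logarithmic conormal sequence).

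The assertion being local on $X$, I would work in a neighborhood of an arbitrary point lying in $Z := f(Y)$ (outside $Z$ there is nothing to prove). Applying embedded resolution to the pair $(X, D \cup Z)$ yields a proper birational morphism $\pi : \tilde X \to X$, an isomorphism over $X \setminus (D \cup \mathrm{Sing}(Z))$, together with an SNC divisor $\tilde D \subset \tilde X$ containing $\pi^{-1}(D)_{\mathrm{red}} \cup \Exc(\pi)$, such that the strict transform $\tilde Z$ of $Z$ is smooth and $\tilde Z \cup \tilde D$ has simple normal crossings; in particular $\tilde Z \cap \tilde D$ is an SNC divisor on $\tilde Z$. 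After possibly blowing $Y$ up further to lift $f$ compatibly, one obtains a factorization $\tilde Y \to \tilde Z \hookrightarrow \tilde X$ in which $\tilde Y \to \tilde Z$ is a proper birational morphism of smooth log pairs.

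Three applications of Lemma \ref{log-cotangent_modification} -- to $\pi$ on the base, to $\tilde Y \to Y$ on the source, and to $\tilde Y \to \tilde Z$ -- each identify log cotangent sheaves under direct image, and thereby reduce the desired surjectivity to that of
\[
\Omega^1_{\tilde X}(\log \tilde D) \longrightarrow i_* \Omega^1_{\tilde Z}(\log (\tilde Z \cap \tilde D))
\]
for the transverse closed embedding $i : \tilde Z \hookrightarrow \tilde X$. This last surjectivity is verified in local coordinates $(z_1,\dots,z_n)$ adapted to both $\tilde Z$ and $\tilde D$: writing $\tilde Z = \{z_{k+1}=\cdots=z_n=0\}$ and $\tilde D = \bigcup_{j \in J}\{z_j=0\}$ with $J \subset \{1,\dots,k\}$ (using transversality together with $\tilde Z \not\subset \tilde D$), each local generator of $\Omega^1_{\tilde Z}(\log(\tilde Z \cap \tilde D))$, of the form $\frac{dz_j}{z_j}|_{\tilde Z}$ for $j \in J$ or $dz_j|_{\tilde Z}$ for $j \in \{1,\dots,k\} \setminus J$, is manifestly the restriction of the corresponding generator of $\Omega^1_{\tilde X}(\log \tilde D)$.

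I expect the main technical point to be the very first step: producing a simultaneous embedded log resolution and a compatible lift of $f$, so that one genuinely reduces to a transverse closed-embedding picture. This is a standard but somewhat involved bookkeeping exercise based on Hironaka's theorem; the rest of the argument is then either a formal consequence of Lemma \ref{log-cotangent_modification} or an elementary coordinate computation.
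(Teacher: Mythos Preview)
Your overall strategy coincides with the paper's: construct an embedded log resolution so that the strict transform $\tilde Z$ sits transversally inside $(\tilde X,\tilde D)$, verify the surjectivity for the resulting closed immersion by the local coordinate computation you give, and use Lemma~\ref{log-cotangent_modification} to pass between the various models. The paper sets up the same commutative square and treats the closed-immersion case in exactly the same way.

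There is, however, a genuine gap in your reduction step, and it is not where you locate it. You assert that the three applications of Lemma~\ref{log-cotangent_modification} ``thereby reduce the desired surjectivity to that of'' the closed-immersion pull-back $i^* : \Omega^1_{\tilde X}(\log \tilde D) \to i_*\Omega^1_{\tilde Z}(\log(\tilde Z\cap\tilde D))$. The isomorphisms for $\sigma:\tilde Y\to Y$ and $\tau:\tilde Y\to\tilde Z$ do identify $f_*\Omega^1_Y(\log E)$ with $\pi_* i_*\Omega^1_{\tilde Z}(\log(\tilde Z\cap\tilde D))$, and the one for $\pi$ identifies $\Omega^1_X(\log D)$ with $\pi_*\Omega^1_{\tilde X}(\log\tilde D)$. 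But the map whose surjectivity you then need is $\pi_*(i^*)$, and since $\pi_*$ is only left exact, this does \emph{not} follow formally from the surjectivity of $i^*$. So contrary to your closing remark, the construction of the resolution is routine; it is the descent through $\pi$ \emph{after} lifting along $i$ that requires an argument.

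The paper handles precisely this point by explicit section-chasing at the level of germs rather than by invoking Lemma~\ref{log-cotangent_modification} as a black box: given a germ on $Y$ at a point over $x$, one pulls it back to $\tilde Y$, pushes it down to a germ on $\tilde Z$ near some chosen point, lifts it locally to $\tilde X$ via the closed-immersion case, and then---using the Hartogs argument from the \emph{proof} of Lemma~\ref{log-cotangent_modification} rather than its statement---descends to a germ of $\Omega^1_X(\log D)$ at $x$. Supplying this step (or an equivalent cohomological argument controlling $R^1\pi_*$ of the kernel of $i^*$) would complete your proof.
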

 
 \begin{proof}
First consider the case where the underlying map $f : Y \arrow X$ is a closed immersion of a smooth subvariety (and so $E = D \cap Y$). Then the claim is that for every $x \in X$ there exists a neighbourhood $U$ of $x$ in $X$ such that every section of $\Omega^1_{Y}(\log E)$ on $U \cap Y$ can be lifted to a section of $\Omega^1_{X}(\log D)$ on $U$. This follows easily from the local presentation of the log-cotangent bundle, cf. above.

For the general case, by taking an embedded resolution of singularities of the image of $Y$, one shows the existence of a commutative diagram
\begin{align*}
\xymatrix{
(Y', E')  \ar[r]^{f'} \ar[d]^{\nu} &(X',D')  \ar[d]^{\mu}   \\
(Y,E)   \ar[r]^{f}  & (X,D) }
\end{align*}
where $(Y', E')$ and $(X',D')$ are smooth log pairs, $\mu$ and $\nu$ are proper and birational morphisms of log pairs and $f' : (Y', E') \arrow (X',D')$ is a morphism of log pairs whose image $Y'' := f'(Y')$ is a smooth subvariety of $X'$ such that $Y'' \cap D'$ is a simple normal crossing divisor.
The claim then follows from the special case above and the same kind of arguments used during the proof of lemma \ref{log-cotangent_modification}:

Given $x \in X$, let us show that $f^*$ is surjective at $x$. The only non-trivial case is when $x$ is of the form $x = f(y)$. Let $s$ be a section of $\Omega^1_{Y}(\log E)$ defined in a neighborhood of $y$. Choosing a point $y' \in Y'$ such that $\nu(y') = y$, one get by pulling-back a section $\nu^*s$ of $\Omega^1_{Y'}(\log E')$ defined in a neighborhood of $y'$. Arguing as in the proof of lemma \ref{log-cotangent_modification}, the section $\nu^*s$ can be obtained by pull-back from a section $s''$ of $\Omega^1_{Y''}(\log Y'' \cap D')$ defined in a neighborhood of $f'(y')$ in $Y''$. Then, by applying the special case above, we see that this last section $s''$ is in turn the pull-back of a section $t'$ of $\Omega^1_{X'}(\log D')$ defined in a neighborhood of $f'(y')$ in $X'$. Finally, this section $t'$ is the pull-back of a section $t$ of $\Omega^1_{X}(\log D)$ defined in a neighborhood of $\mu (f'(y')) = f( \nu (y')) = x$ in $X$. We claim that $s = f^*t $ in a neighborhood of $y$. As $\nu^*$ is injective, it is sufficient to show that $\nu^* s = \nu^* ( f^* t) = f'^*( \mu^* t)$ in a neighborhood of $y'$, but this is true by construction. This finishes the proof of the surjectivity.
\end{proof}

\subsection{Highly ramified towers}
This section is strongly inspired by \cite[pp.~269-272]{Mumford77}. Note however that our definition differs slightly from that of \textit{loc.~cit}.

\begin{defn} Let $(\overline{X},D)$ be a projective smooth log pair together with a collection of connected finite \'etale Galois coverings $\pi_{\alpha} : X_{\alpha} \arrow X$, where $X := \overline{X}-D$. Assume that any two covers are dominated by a third one. We say that the tower $\{ \pi_{\alpha} \}$ is highly ramified over $D$ if for any $N \geq 1$, there exists $\alpha$ and $(\overline{X}_{\alpha}, D_{\alpha})$ a projective smooth log-compactification of $X_{\alpha}$ with a morphism of log pairs $(\overline{X}_{\alpha}, D_{\alpha}) \arrow (\overline{X},D)$ which extends $\pi_{\alpha}$, such that the index of ramification of every component of $D_{\alpha}$ is at least $N$. 
\end{defn}

\begin{ex}\label{arithmetic_quotients_are_UR} 
Let $(\mathbf{G}, u)$ be a rational Hodge datum of hermitian type with $\cD$ the associated bounded symmetric domain, and $\Gamma \subset \mathbf{G}(\bQ)$ a torsion-free arithmetic subgroup. By definition, there is an embedding $\sigma : \mathbf{G}_{\bQ} \hookrightarrow \mathbf{Gl}_n$ defined over $\bQ$ such that $\Gamma$ is commensurable with $\mathbf{G}_{\sigma}(\bZ) := \mathbf{G}_{\bQ}(\bQ) \cap \mathbf{Gl}_n(\bZ)$. For every positive integer $n$, let $\Gamma(n)_{\sigma}$ be the normal subgroup of elements of $\Gamma$ which belongs to the kernel of $\mathbf{Gl}_n(\bZ) \arrow \mathbf{Gl}_n(\bZ/ n \bZ)$. This corresponds to a collection of connected finite \'etale Galois coverings $\pi_{n} : X_{n} \arrow X$, where $X_{n}  :=  \Gamma(n)_{\sigma} \backslash \cD$ and $X := X_1$. In this situation, Mumford \cite[pp.~271-272]{Mumford77} has shown that for any (non necessarily smooth projective) toroidal compactification $\overline{X}$ of $X$, the tower $\{ \pi_{n} \}$ is highly ramified over $D := \overline{X} - X$. More precisely, he has shown the stronger property that for any positive integer $N$, one can make the ramification index of all components of the divisor at infinity relatively to the map $ \pi_{n} $ divisible by $N$ by taking a sufficiently big $n$.
\end{ex}

\begin{theorem}\label{Mumford's trick} Let $(\overline{X},D)$ be a projective smooth log pair together with a collection of connected finite \'etale Galois coverings $\pi_{\alpha} : X_{\alpha} \arrow X$, where $X := \overline{X}-D$. Consider the following properties.

\begin{enumerate}
\item The tower $\{ \pi_{\alpha} \}$ is highly ramified over $D$.

\item For every $N \geq 1$, there exists $\alpha$ and $(\overline{X}_{\alpha}, D_{\alpha})$ a projective smooth log-compactification of $X_{\alpha}$ with a morphism of log pairs $(\overline{X}_{\alpha}, D_{\alpha}) \arrow (\overline{X},D)$ which extends $\pi_{\alpha}$ such that for every integers $l,m$ satisfying $\frac{l}{m} \leq N$, the following diagram commutes 
\begin{align*}
\xymatrix{
(\Omega^1_{\overline{X}}(\log D))^{\otimes l}  \otimes_{\OO_{\overline{X}}} \OO_{\overline{X}}(- m D)  \ar[r] \ar[d]^{\pi_{\alpha}^*} & (\Omega^1_{\overline{X}}(\log D))^{\otimes l}   \ar[d]^{\pi_{\alpha}^*}   \\
(\Omega^1_{\overline{X}_{\alpha}} )^{\otimes l}  \ar[r]  & (\Omega^1_{\overline{X}_{\alpha}}(\log D_{\alpha}))^{\otimes l}  }
\end{align*}

\item For every $N \geq 1$, there exists $\alpha$ such that for any projective smooth log pair $(\overline{Y},E)$ equipped with a morphism of log pairs $f:(\overline{Y},E) \arrow (\overline{X},D)$ mapping birationnally $\overline{Y}$ onto its image, for any projective smooth log-compactification $(\overline{Y}', E')$ of an irreducible component of $(\overline{Y}-E) \times_X X_{\alpha}$ and every integers $l$ and $m$ satisfying $\frac{l}{m} \leq N$, the following diagram commutes 
\begin{align*}
\xymatrix{
( \Omega^1_{\overline{Y}}(\log E))^{\otimes l}  \otimes_{\OO_{\overline{Y}}} f^* \OO_{\overline{X}}(- m D)  \ar[r] \ar[d]^{\pi^*} &(\Omega^1_{\overline{Y}}(\log E))^{\otimes l}   \ar[d]^{\pi^*}   \\
( \Omega^1_{\overline{Y}'})^{\otimes l}   \ar[r]  & ( \Omega^1_{\overline{Y}'}(\log E'))^{\otimes l}  }
\end{align*}
\end{enumerate} 
Then $(1) \implies (2) \implies (3)$.
\end{theorem}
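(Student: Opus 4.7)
The plan is to prove the two implications by separate arguments: $(1)\implies(2)$ reduces to a direct local computation using the ramification hypothesis, while $(2)\implies(3)$ uses the functorial behavior of log cotangent sheaves supplied by Lemmas \ref{log-cotangent_modification} and \ref{log-cotangent_subvariety}, followed by a codimension-two descent step which I expect to be the main obstacle.

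\textbf{Proof sketch of $(1)\implies(2)$.} Given $N$, choose $\alpha$ and $(\overline{X}_\alpha, D_\alpha)$ as in $(1)$. The commutativity of the diagram is equivalent to saying that $\pi_\alpha^*$ sends local sections of $(\Omega^1_{\overline{X}}(\log D))^{\otimes l} \otimes \mathcal{O}_{\overline{X}}(-mD)$ into the subsheaf $(\Omega^1_{\overline{X}_\alpha})^{\otimes l} \subset (\Omega^1_{\overline{X}_\alpha}(\log D_\alpha))^{\otimes l}$. I would check this in local coordinates $(z_1, \ldots, z_n)$ around a point of $\overline{X}$ with $D = \{z_1 \cdots z_r = 0\}$ and $(w_1, \ldots, w_n)$ around a preimage with $D_\alpha = \{w_1 \cdots w_s = 0\}$: writing $\pi_\alpha^* z_i = u_i \prod_j w_j^{a_{ij}}$ with $u_i$ a unit and $a_{ij} \geq 0$, the ramification hypothesis gives $\sum_i a_{ij} \geq N$ for every $j$. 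A section of the source is locally of the form $(z_1 \cdots z_r)^m \cdot \omega$ with $\omega \in (\Omega^1(\log D))^{\otimes l}$; under $\pi_\alpha^*$, the prefactor vanishes to order at least $mN$ along each component $D_\alpha^j = \{w_j = 0\}$, while $\pi_\alpha^* \omega$, expanded as a sum of tensors of $dw_j/w_j$ and regular differentials, has pole order at most $l$ along each $D_\alpha^j$. The net vanishing order is therefore at least $mN - l \geq 0$ by the hypothesis $l/m \leq N$, so the pullback lies in $(\Omega^1_{\overline{X}_\alpha})^{\otimes l}$, as required.

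\textbf{Proof sketch of $(2)\implies(3)$.} Given $N$, take $\alpha$ and $(\overline{X}_\alpha, D_\alpha)$ from $(2)$. For the given $f:(\overline{Y}, E) \to (\overline{X}, D)$ and any log-compactification $(\overline{Y}', E')$ of an irreducible component of $(\overline{Y} - E) \times_X X_\alpha$, the natural maps on open parts extend only rationally to $\overline{Y}' \to \overline{Y}$ and $\overline{Y}' \to \overline{X}_\alpha$, so I use Hironaka's resolution of indeterminacy to choose a proper birational morphism of smooth log pairs $g:(\overline{Y}'', E'') \to (\overline{Y}', E')$ through which both rational maps become honest morphisms of log pairs $g_Y:(\overline{Y}'', E'') \to (\overline{Y}, E)$ and $h:(\overline{Y}'', E'') \to (\overline{X}_\alpha, D_\alpha)$; note that $h$ is automatically a morphism of log pairs since, on the locus where $g$ is an isomorphism, the image lies in $X_\alpha$. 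Setting $\tilde{f} := f \circ g_Y = \pi_\alpha \circ h$, Lemma \ref{log-cotangent_subvariety} applied to $f$ makes $f^*(\Omega^1_{\overline{X}}(\log D))^{\otimes l} \to (\Omega^1_{\overline{Y}}(\log E))^{\otimes l}$ surjective, so any local section of the top left of the diagram can locally be written as $f^* \tilde{s}$ for $\tilde{s}$ a local section of $(\Omega^1_{\overline{X}}(\log D))^{\otimes l} \otimes \mathcal{O}_{\overline{X}}(-mD)$. Applying $(2)$ to $\tilde{s}$ gives that $\pi_\alpha^* \tilde{s}$ lies in $(\Omega^1_{\overline{X}_\alpha})^{\otimes l}$, and pulling back via $h$ yields $h^* \pi_\alpha^* \tilde{s} = g_Y^*(f^* \tilde{s}) \in (\Omega^1_{\overline{Y}''})^{\otimes l}$. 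It remains to descend this statement from $\overline{Y}''$ down to $\overline{Y}'$: the required fact, which I expect to be the main technical obstacle, is that a section $\omega$ of $(\Omega^1_{\overline{Y}'}(\log E'))^{\otimes l}$ whose pullback to $\overline{Y}''$ lies in $(\Omega^1_{\overline{Y}''})^{\otimes l}$ must itself lie in $(\Omega^1_{\overline{Y}'})^{\otimes l}$. Because $g$ is proper birational between smooth varieties, the locus on $\overline{Y}'$ where $g$ fails to be an isomorphism has codimension at least two, and outside it the statement is immediate; to extend across it I would work in local coordinates on $\overline{Y}'$ adapted to $E'$ and observe that in the associated basis of $(\Omega^1_{\overline{Y}'}(\log E'))^{\otimes l}$ formed by tensors of $dz_i/z_i$ and $dz_k$, the subsheaf $(\Omega^1_{\overline{Y}'})^{\otimes l}$ is cut out by explicit divisibility conditions on the coefficients of $\omega$ by monomials in the defining equations of $E'$; these codimension-one divisibility conditions extend across the codimension-two bad locus by Hartogs's theorem, giving the descent and completing the proof.
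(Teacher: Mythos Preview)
Your proof is correct and follows essentially the same approach as the paper. For $(1)\Rightarrow(2)$ you carry out in local coordinates exactly the computation the paper summarizes in one line: $\pi_\alpha^*$ sends $(\Omega^1_{\overline{X}}(\log D))^{\otimes l}\otimes\cO_{\overline{X}}(-mD)$ into $(\Omega^1_{\overline{X}_\alpha}(\log D_\alpha))^{\otimes l}\otimes\cO_{\overline{X}_\alpha}(-mN D_\alpha)\subset(\Omega^1_{\overline{X}_\alpha})^{\otimes l}$ once $mN\geq l$. For $(2)\Rightarrow(3)$ both you and the paper reduce to $(2)$ via the surjectivity of $f^*$ supplied by Lemma~\ref{log-cotangent_subvariety} (together with the projection formula), which lets one write any local section of $(\Omega^1_{\overline{Y}}(\log E))^{\otimes l}\otimes f^*\cO_{\overline{X}}(-mD)$ as $f^*\tilde{s}$.

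Where you go beyond the paper is in making the passage from $\overline{X}_\alpha$ to $\overline{Y}'$ explicit: the paper's proof of $(2)\Rightarrow(3)$ is a two-sentence sketch that does not address the fact that the rational map $\overline{Y}'\dashrightarrow\overline{X}_\alpha$ need not be a morphism. Your resolution $(\overline{Y}'',E'')\to(\overline{Y}',E')$ through which both $g_Y$ and $h$ become morphisms, followed by the reflexivity/Hartogs descent from $\overline{Y}''$ to $\overline{Y}'$, supplies the missing details. One minor correction: the justification ``on the locus where $g$ is an isomorphism, the image lies in $X_\alpha$'' is not quite the right reason $h$ is a log morphism; what you actually need (and what holds) is $h^{-1}(D_\alpha)\subset g^{-1}(E')\subset E''$, which follows since on $\overline{Y}''\setminus g^{-1}(E')\simeq Y'_0$ the map to $\overline{X}_\alpha$ lands in $X_\alpha$. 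In fact, for the step $h^*(\Omega^1_{\overline{X}_\alpha})^{\otimes l}\subset(\Omega^1_{\overline{Y}''})^{\otimes l}$ you do not even need $h$ to be a log morphism, since pullback of regular forms under any morphism of smooth varieties is regular.
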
  

\begin{proof}
We first show the implication $(1) \implies (2) $. By assumption, there exists $\alpha$ and $(\overline{X}_{\alpha}, D_{\alpha})$ a projective smooth log-compactification of $X_{\alpha}$ with a morphism of log pairs $(\overline{X}_{\alpha}, D_{\alpha}) \arrow (\overline{X},D)$ which extends $\pi_{\alpha}$, such that the index of ramification of every component of $D_{\alpha}$ is at least $N$. It follows that 
\[ \pi_{\alpha}^* ((\Omega^1_{\overline{X}}(\log D))^{\otimes l}  \otimes_{\OO_{\overline{X}}} \OO_{\overline{X}}(- m D)) \subset  (\Omega^1_{\overline{X}_{\alpha}}(\log D_{\alpha}))^{\otimes l} \otimes_{\OO_{\overline{X}_{\alpha}}} \OO_{\overline{X}_{\alpha}}(- mN \cdot D_{\alpha})),\] and the latter is included in $(\Omega^1_{\overline{X}_{\alpha}} )^{\otimes l} $ as soon as $m  N \geq l$.\\

Let us now show the implication $(2) \implies (3)$.
Fix $N \geq 1$ and take $\alpha$ as in $(2)$. First observe that the commutation can be verified locally. But it follows from the projection formula and the lemma \ref{log-cotangent_subvariety} that the morphism of $\cO_{\overline{X}}$-modules
\[ \Omega^1_{\overline{X}}(\log D) \otimes_{\OO_{\overline{X}}} \OO_{\overline{X}}(- m D) \xrightarrow{f^*} f_* ( \Omega^1_{\overline{Y}}(\log E)\otimes_{\OO_{\overline{Y}}} f^* \OO_{\overline{X}}(- m D))  \]
is surjective, hence the commutation of $(3)$ is a consequence of the commutation of $(2)$.
\end{proof}

\section{Proof of the main theorem}\label{Proof of the main theorem}

In this section, we give a proof of the following

\begin{theorem}[]\label{precise_main_result}
Any arithmetic locally symmetric variety $X$ admits a finite \'etale cover $X'$ corresponding to a congruence subgroup of $\pi_1(X)$ such that all subvarieties of $X'$ have maximal cotangent dimension. 
\end{theorem}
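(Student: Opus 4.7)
The proof proceeds in four stages. First, on a toroidal compactification $(\overline{X}, D)$ of a sufficiently small congruence cover of $X$, I would produce a Viehweg-big subsheaf of some symmetric power of $\Omega^1_{\overline{X}}(\log D)$ using the Calabi--Yau PVHS of Section \ref{Calabi-Yau PVHS}. Second, transport this into $S^w \Omega^1_{\overline{X}_n}$ (no log poles) on a congruence cover via Mumford's highly ramified tower (theorem \ref{Mumford's trick}). Third, run the same construction uniformly over all subvarieties using the strengthening in theorem \ref{Mumford's trick}(3). Fourth, invoke Campana--P\u{a}un (theorem \ref{Campana-Paun}) to upgrade maximal cotangent dimension to general type.

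After replacing $\Gamma$ by a neat congruence subgroup, pick a smooth projective toroidal compactification $(\overline{X}, D)$ of $X$ and, for some $r \geq 1$, take the automorphic $\bC$-PVHS of Calabi--Yau type of weight $w = r \cdot \rk \cD$ from Section \ref{Calabi-Yau PVHS}; let $(\EE, \theta)$ be the associated system of log Hodge bundles on $(\overline{X}, D)$, so that $\EE^w \simeq \cL^{\otimes \rk \cD}$. The iterated Higgs field produces an $\OO_{\overline{X}}$-morphism $S^w T_{\overline{X}}(\log D) \otimes \EE^w \to \EE^0$, which on the uniformizing domain $\cD$ is $\mathbf{G}(\bR)^+$-equivariant and hence, by irreducibility of the cominuscule representation combined with the non-degeneracy of the first Higgs (proposition \ref{BTT}), is surjective. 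Dualizing yields the key injection
\[
\iota : (\EE^0)^\vee \otimes \cL^{\otimes \rk \cD} \hookrightarrow S^w \Omega^1_{\overline{X}}(\log D).
\]
Since $\EE^0 \subset \ker \theta$, $(\EE^0)^\vee$ is nef by theorem \ref{nef}; and $\cL$ is ample with respect to $X$ by theorem \ref{amplitude_L}; hence, over $X$ (where a twist by $\OO(-mD)$ is trivial), the source of $\iota$ is Viehweg-big.

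Next, pull back $\iota \otimes \OO(-mD)$ to a congruence cover $(\overline{X}_n, D_n)$ with ramification index $\geq w/m$ along $D$ (example \ref{arithmetic_quotients_are_UR}). Theorem \ref{Mumford's trick}(2) converts this into an injection from a Viehweg-big-over-$X_n$ torsion-free sheaf into $S^w \Omega^1_{\overline{X}_n}$ (no log poles). Lemma \ref{lemma_bigness} then gives that $\Omega^1_{\overline{X}_n}$ is big in the sense of Hartshorne, i.e., $X_n$ has maximal cotangent dimension. For an arbitrary subvariety $Z \subset X_n$, let $f : (\overline{Y}, E) \to (\overline{X}, D)$ be a smooth log resolution of the closure of the image of $Z$ under $X_n \to X$, and choose a deeper congruence level $n' \geq n$ for which theorem \ref{Mumford's trick}(3) applies to all such $f$ simultaneously. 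Let $\overline{Y}'$ be a smooth log compactification of the connected component of $(\overline{Y} - E) \times_X X_{n'}$ birational to $Z$. By lemma \ref{log-cotangent_subvariety}, pulling back $\iota$ along $f$ gives a morphism into $S^w \Omega^1_{\overline{Y}}(\log E)$. Twisting by $\OO(-mD)$ and invoking theorem \ref{Mumford's trick}(3) produces on $\overline{Y}'$ a non-zero map from a Viehweg-big torsion-free sheaf into $S^w \Omega^1_{\overline{Y}'}$; lemma \ref{lemma_bigness} then forces $\Omega^1_{\overline{Y}'}$ to be big, and birational invariance transfers maximal cotangent dimension to $Z$. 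Setting $X' := X_{n'}$ and applying theorem \ref{Campana-Paun} concludes the argument.

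The most delicate point will be verifying non-vanishing of the restricted morphism $f^*\iota$ when $\dim Z$ is strictly less than $w$, since the iterated Higgs can a priori degenerate along low-dimensional subvarieties. The remedy is to choose the Calabi--Yau weight $w$ (equivalently, the parameter $r$ in theorem \ref{classification}) adapted to the dimension of the subvariety, and to exploit the irreducibility of the underlying $\mathbf{G}$-module together with an iterated form of proposition \ref{BTT} to control the degeneracy locus. The second technical issue, resolved by the strengthening in theorem \ref{Mumford's trick}(3), is ensuring uniformity of Mumford's trick over the infinite family of all subvarieties: it is crucial that a single congruence level $n'$ handle every subvariety at once, which is precisely what part (3) guarantees.
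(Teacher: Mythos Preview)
Your overall architecture is the paper's, but there is a real gap at the step where you restrict the single injection $\iota$ built on $\overline{X}$ to a subvariety. The composite
\[
f^*\bigl((\EE^0)^\vee \otimes \EE^w\bigr) \xrightarrow{f^*\iota} f^*S^w\Omega^1_{\overline{X}}(\log D) \longrightarrow S^w\Omega^1_{\overline{Y}}(\log E)
\]
is (up to duality) exactly the $w$-th iterated Higgs field $\phi_w^Y : S^w T_{\overline{Y}}(-\log E)\otimes \EE^w_{|\overline{Y}} \to \EE^0_{|\overline{Y}}$ of the pulled-back system of Hodge bundles, and this can vanish identically: nothing forces a tangent direction to $Y$ to transport $\EE^w$ all the way down to $\EE^0$ in $w$ steps. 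Your proposed remedies do not close this: varying $w$ with the subvariety destroys the uniformity of the congruence level you correctly insist on, and there is no ``iterated BTT'' controlling the locus where $\phi_w^Y$ degenerates.

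The paper's Proposition \ref{main_construction} is precisely the missing idea. One works on $(\overline{Y},E)$ from the start: pull back the log $\bC$-PVHS and take the \emph{largest} $k=k_{\overline{Y}}\leq w$ for which $\phi_k^Y\neq 0$ (proposition \ref{BTT} guarantees $k_{\overline{Y}}\geq 1$). By maximality of $k_{\overline{Y}}$, the image $\cN_{\overline{Y}}\subset \EE^{\,w-k_{\overline{Y}}}_{|\overline{Y}}$ lies in the kernel of $\theta_{|\overline{Y}}$, so corollary \ref{weakly negativity of kernels} makes $\cN_{\overline{Y}}^\vee$ weakly positive, and one obtains a \emph{nonzero} map
\[
\cN_{\overline{Y}}^\vee \otimes \EE^w_{|\overline{Y}} \longrightarrow S^{k_{\overline{Y}}}\Omega^1_{\overline{Y}}(\log E).
\]
The point is that the ``kernel-of-Higgs'' subsheaf fed into Zuo's theorem must be constructed on $Y$, not restricted from $X$; it is $\cN_{\overline{Y}}$, not $\EE^0_{|\overline{Y}}$. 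After twisting by $f^*\cO_{\overline{X}}(-mD)$ one lands in $S^{k_{\overline{Y}}}\Omega^1_{\overline{Y}}(\log E)\otimes f^*\cO_{\overline{X}}(-mD)$, hence a fortiori in the degree-$w$ piece, and the rest of your outline (Mumford's trick via theorem \ref{Mumford's trick}(3) for a uniform level, then lemma \ref{lemma_bigness}) goes through exactly as in the paper.
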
  

This result implies our main theorem \ref{main_theorem} in view of the theorem \ref{Campana-Paun}. \\

Let $(\mathbf{G}, u)$ be the rational Hodge datum of hermitian type, with $\cD$ the associated bounded symmetric domain, and $\Gamma \subset \mathbf{G}(\bQ)$ the torsion-free arithmetic subgroup, such that $X = \Gamma \backslash \cD$. The group $\Gamma$ can be assumed to be neat from the beginning. Let $\overline{X}$ be a smooth projective toroidal compactification of $X$ and set $D := \overline{X} - X$. We denote by $r_{\cD}$ the rank of $\cD$, and by $\cL$ the canonical extension to $\overline{X}$ of the canonical automorphic line bundle on $X$, cf. section \ref{The canonical automorphic line bundle}.\\

A key step in the proof of theorem \ref{precise_main_result} is the following

\begin{prop}\label{main_construction} Let $l$ and $m$ be two integers such that the line bundle $ \cL^{\otimes l} (- m D)$ is big over $X$. Then, for any map of projective smooth log pairs $f : (\overline{Y},E) \arrow (\overline{X},D)$ which is generically finite, the vector bundle $ S^{l \cdot r_{\cD}} \Omega_{\overline{Y}}^1 (\log E) \otimes_{\cO_{\overline{Y}}} f^* \cO_{\overline{X}}(- m D)$ contains a Viehweg-big subsheaf (in particular it is big).
\end{prop}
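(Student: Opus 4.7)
My plan is to construct the Viehweg-big subsheaf via the iterated Higgs coupling of a Calabi-Yau type $\bC$-PVHS on $\cD$. By theorem \ref{classification} and its corollary, there is an irreducible $\mathbf{G}(\bR)^+$-equivariant $\bC$-PVHS of Calabi-Yau type on $\cD$ of weight $w := l \cdot r_\cD$ whose top Hodge bundle is isomorphic to $\cL^{\otimes l}$; since it is obtained inside $S^l \bV_\cD$, its bottom Hodge bundle $\EE^0$ is also a line bundle. Because $\Gamma$ is neat, this PVHS descends to $X$ with unipotent local monodromies around $D$ and extends canonically to a log $\bC$-PVHS on $(\overline{X}, D)$; let $(\EE, \theta)$ denote the associated system of log Hodge bundles.

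Iterating the log Higgs field $w$ times and using $\theta \wedge \theta = 0$ to factor through symmetric powers yields
\[
\theta^w \colon \EE^w \longrightarrow \EE^0 \otimes S^{l r_\cD} \Omega^1_{\overline{X}}(\log D).
\]
This map is non-zero: by irreducibility of the PVHS, the smallest sub-Higgs-bundle containing $\EE^w$ is all of $\EE$, which forces the iterate to reach $\EE^0$ in exactly $w$ steps (for the minimal weight case this is also immediate from Proposition \ref{BTT}, and the higher weight case follows by propagating the same isomorphism through the realization of the CY-type PVHS inside $S^l \bV_\cD$). Since both $\EE^w$ and $\EE^0$ are line bundles, the non-vanishing of $\theta^w$ translates into an injection
\[
(\EE^0)^\vee \otimes \cL^{\otimes l} \hookrightarrow S^{l r_\cD} \Omega^1_{\overline{X}}(\log D).
\]

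The summand $\EE^0 \subset \EE$ is automatically contained in $\ker \theta$ (as $\theta(\EE^0) \subset \EE^{-1} \otimes \Omega^1 = 0$), so theorem \ref{nef} gives that $(\EE^0)^\vee$ is nef. Combined with the hypothesis that $\cL^{\otimes l}(-mD)$ is big, lemma \ref{Viehweg}(4) then ensures $(\EE^0)^\vee \otimes \cL^{\otimes l}(-mD)$ is Viehweg-big on $\overline{X}$. Twisting the injection above by $\cO_{\overline{X}}(-mD)$ and applying lemma \ref{Viehweg}(3) to its image exhibits a Viehweg-big subsheaf of $S^{l r_\cD} \Omega^1_{\overline{X}}(\log D) \otimes \cO_{\overline{X}}(-mD)$. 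To transfer to $\overline{Y}$, I pull back via $f$: by lemma \ref{Viehweg}(5) the pullback modulo torsion remains Viehweg-big, and composing with the $l r_\cD$-th symmetric power of the pullback map $f^* \Omega^1_{\overline{X}}(\log D) \to \Omega^1_{\overline{Y}}(\log E)$ of log-cotangent forms (which exists because $f$ is a morphism of log pairs) lands the Viehweg-big sheaf into $S^{l r_\cD} \Omega^1_{\overline{Y}}(\log E) \otimes f^* \cO_{\overline{X}}(-mD)$; the image is then a Viehweg-big subsheaf by lemma \ref{Viehweg}(3).

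The principal obstacle is verifying that the final composition is generically non-zero on $\overline{Y}$. When $f$ is dominant and generically finite this is automatic, since the cotangent pullback is then generically an isomorphism; but when $f$ maps $\overline{Y}$ to a proper subvariety of $\overline{X}$, one must know that the Calabi-Yau Yukawa tensor does not vanish on the tangent directions to the image of $f$. I expect this to follow from a strong non-degeneracy consequence of Proposition \ref{BTT} (propagated through the $\mathbf{G}(\bR)^+$-equivariant structure on $\cD$), or alternatively to be reducible to the dominant case after replacing $f$ by a lift to a sufficiently ramified tower as recalled in section \ref{Differential forms with logarithmic poles}.
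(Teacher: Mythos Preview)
Your approach has a genuine gap, which you yourself flag but do not resolve: when $f$ is not dominant, the composition of the pulled-back iterate $f^*\theta^w$ with the cotangent restriction $f^* S^{l r_\cD}\Omega^1_{\overline{X}}(\log D) \to S^{l r_\cD}\Omega^1_{\overline{Y}}(\log E)$ can vanish identically. A concrete instance: take $\cD = \bH \times \bH$ (so $r_\cD = 2$), $l = 1$, and let $\overline{Y}$ be a horizontal fibre $C_1 \times \{\mathrm{pt}\}$ in a product of two modular curves. The weight-$2$ Calabi--Yau PVHS is the exterior tensor product $\bV_{\bH} \boxtimes \bV_{\bH}$, and one checks directly that $\phi_2$ annihilates any pair of tangent vectors both lying in the first factor (the Yukawa coupling needs one input from each factor), whereas $\phi_1$ does not. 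Neither of your proposed fixes applies: Proposition \ref{BTT} only controls $\phi_1$ and says nothing about non-degeneracy of $\phi_w$ along subvarieties, and passing to a highly ramified cover cannot turn a non-dominant $f$ into a dominant one.

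The paper avoids this by pulling back the entire system of log Hodge bundles to $(\overline{Y},E)$ \emph{before} iterating: one lets $k_{\overline{Y}}\le w$ be the \emph{largest} integer for which $\phi_{k_{\overline{Y}}}\colon S^{k_{\overline{Y}}}T_{\overline{Y}}(-\log E)\otimes \EE^w|_{\overline{Y}}\to \EE^{w-k_{\overline{Y}}}|_{\overline{Y}}$ is non-zero. Proposition \ref{BTT} together with generic finiteness of $f$ gives $k_{\overline{Y}}\ge 1$, and maximality forces the image $\cN_{\overline{Y}}$ to lie in $\ker\theta|_{\overline{Y}}$, so Corollary \ref{weakly negativity of kernels} makes $\cN_{\overline{Y}}^\vee$ weakly positive. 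Tensoring with the big line bundle $f^*\cH$ then yields the required Viehweg-big subsheaf inside a symmetric power of $\Omega^1_{\overline{Y}}(\log E)$ twisted by $f^*\cO_{\overline{X}}(-mD)$. As a minor side remark, your claim that $\EE^0$ is a line bundle is false for non-tube domains such as the complex ball $\bB^n$ with $n\ge 2$; this does not actually break your argument on $\overline{X}$ (you only need $\theta^w\neq 0$ there, not injectivity), but it is worth correcting.
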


\begin{proof}
By assumption, the line bundle $\cH := \cL^{\otimes l}(- m D)$ is big over $X$.\\

By Theorem \ref{classification}, there exists an irreducible system of log Hodge bundles $(\EE, \theta)$ on the log pair $(\overline{X},D)$ which is of Calabi-Yau type of weight $w := r_{\cD} \cdot l$. It verifies $\EE^w = \cL ^{\otimes l}$.\\

Let $(\overline{Y}, E)$ be a projective smooth log pair with a generically finite morphism $ f : (\overline{Y},E) \arrow (\overline{X},D)$.
By pulling-back we get a system of logarithmic Hodge bundles of Calabi-Yau type of weight $w$ on the log pair $(\overline{Y},E)$ that we denote $(\EE_{|\overline{Y}}, \theta_{|\overline{Y}})$.\\

For all $k \geq 0$, we have morphisms of $\cO_{\overline{X}}$-modules (cf. section \ref{Def_VHS})
\[ \phi_k :  S^k T_{\overline{Y}}( -\log E) \otimes_{\cO_{\overline{Y}}} \EE_{|\overline{Y}}^w \arrow \EE_{|\overline{Y}}^{w-k}. \] 

Let $k_{\overline{Y}} \leq w$ be the largest $k$ for which the map $ \phi_k $ is non-zero. Note that $k_{\overline{Y}} \geq 1 $ by proposition \ref{BTT}. Denote by $\cN_{\overline{Y}}$ the image of  $\phi_{k_{\overline{Y}}}$. It is a coherent subsheaf of $\EE_{|\overline{Y}}^{w-k_{\overline{Y}}}$ contained in the kernel of $\theta_{|\overline{Y}}$. It follows from corollary \ref{weakly negativity of kernels} that its dual $\cN_{\overline{Y}}^{\vee}$ is a weakly positive torsion-free coherent sheaf. From $\phi_{k_{\overline{Y}}}$ we get a non-zero morphism
\begin{align*}
 \cN_{\overline{Y}}^{\vee} \otimes_{\cO_{\overline{Y}}} \EE_{|\overline{Y}}^{w} \arrow S^{k_{\overline{Y}}} \Omega_{\overline{Y}}^1 (\log E).
\end{align*}
By tensoring with the line bundle $ f^* \cO_{\overline{X}}(- m D)$, this provides a non-zero morphism
\begin{align}\label{Viehweg-Zuo morphism bis}
 \cN_{\overline{Y}}^{\vee} \otimes_{\cO_{\overline{Y}}} f^* \cH \arrow S^{k_{\overline{Y}}} \Omega_{\overline{Y}}^1 (\log E) \otimes_{\cO_{\overline{Y}}}  f^* \cO_{\overline{X}}(- m D).
\end{align}

Observe that $f^* \cH$ is big because $\cH$ is big over $X$, cf. lemma \ref{Viehweg}. It follows by lemma \ref{Viehweg} that the left-hand side of (\ref{Viehweg-Zuo morphism bis}) is Viehweg-big, hence its image in $S^{k_{\overline{Y}}} \Omega_{\overline{Y}}^1 (\log E) \otimes_{\cO_{\overline{Y}}}  f^* \cO_{\overline{X}}(- m D)$ is Viehweg-big (cf. lemma \ref{Viehweg}).

A fortiori, the vector bundle $S^{w} \Omega_{\overline{Y}}^1 (\log E) \otimes_{\cO_{\overline{Y}}} f^* \cO_{\overline{X}}(- m D)$ is big (cf. lemma \ref{lemma_bigness}). 
\end{proof}

Let us now finish the proof of theorem \ref{precise_main_result}. We keep the notations introduced above.\\

Recall that $\cL$ is big over $X$ (cf. theorem \ref{amplitude_L}), hence the same is true for the line bundle $ \cL^{\otimes l} (-  D)$ for $l$ big enough.

Fixing an embedding $\sigma : \mathbf{G}_{\bQ} \hookrightarrow \mathbf{Gl}_n$ defined over $\bQ$, let $\{ \pi_n \}$ be the tower of connected finite \'etale Galois coverings defined in example \ref{arithmetic_quotients_are_UR}. Recall that in this setting the pair $(\overline{X},D)$ is highly ramified over $D$. By Theorem \ref{Mumford's trick}, there exists $n \geq 1$ such that for any projective smooth log pair $(\overline{Y},E)$ equipped with a morphism of log pairs $f:(\overline{Y},E) \arrow (\overline{X},D)$ mapping birationnally $\overline{Y}$ onto its image, for any projective smooth log-compactification $(\overline{Y}', E')$ of an irreducible component of $(\overline{Y}-E) \times_X X_{n}$, the following diagram commutes 

\begin{align*}
\xymatrix{
S^{l \cdot r_{\cD}}  \Omega^1_{\overline{Y}}(\log E)  \otimes_{\OO_{\overline{Y}}} f^* \OO_{\overline{X}}(-  D)  \ar[r] \ar[d]^{\pi_n^*} & S^{l \cdot r_{\cD}}  \Omega^1_{\overline{Y}}(\log E)   \ar[d]^{\pi_n^*}   \\
S^{l \cdot r_{\cD}}  \Omega^1_{\overline{Y}'}   \ar[r]  & S^{l \cdot r_{\cD}}  \Omega^1_{\overline{Y}'}(\log E')  }
\end{align*}

Let $Z$ be a subvariety of $X_n$ of positive dimension. Its image by the proper map $\pi_n$ is a subvariety of $X$. Moreover, the canonical map $Z \arrow \pi_n(Z) \times_X X_n$ identifies $Z$ with an irreducible component of $ \pi_n(Z) \times_X X_n$. 
We can thus apply the Theorem \ref{Mumford's trick} to $(\overline{Y},E)$ a projective smooth log-compactification of a proper resolution of $\pi_n(Z)$, and $(\overline{Y}', E')$ a projective smooth log-compactification of the corresponding irreducible component of $(\overline{Y}-E) \times_X X_{n}$, such that the restriction of $\pi_n$ extends to a map of log pairs $(\overline{Y}', E') \arrow (\overline{Y},E)$. It follows from the proposition \ref{main_construction} that the vector bundle $ S^{l \cdot r_{\cD}} \Omega_{\overline{Y}}^1 (\log E) \otimes_{\cO_{\overline{Y}}} f^* \cO_{\overline{X}}(- D)$ contains a Viehweg-big subsheaf. But the map $\overline{Y}' \arrow \overline{Y}$ is generically finite and surjective, so the diagram above shows that the vector bundle $S^{l \cdot r_{\cD}}  \Omega^1_{\overline{Y}'} $ contains a Viehweg-big subsheaf too (cf. lemma \ref{Viehweg}). This finally shows that $ \Omega^1_{\overline{Y}'} $ is big by the lemma \ref{lemma_bigness}, hence that $Z$ has maximal cotangent dimension.

\section{The case of the Siegel half-spaces}\label{case_of_Ag}

In order to obtain optimized results, we will deal in this section with orbifolds, noticing that all we have done so far make sense in this bigger category.\\

Let $g$ and $n$ be two positive integers. Denote by $\cA_g(n)$ the moduli stack of principally polarized abelian varieties with a level-n structure. Recall that a level-n structure on a principally polarized abelian variety $A$ of dimension$g$ over a field $k$ of characteristic zero is a $2g$-tuple of points in $A(k)$ which generate the subgroup of $n$-torsion points in $A(\bar{k})$ and form a symplectic basis with respect to the Weil pairing.

 Fix a smooth toroidal compactification $\overline{\cA_g}$ of $\cA_g$ and set $D := \overline{\cA_g} - \cA_g$. We denote by $\cL$ the canonical extension to $\overline{\cA_g}$ of the automorphic line bundle. 
If $\bV$ is the canonical weight one $\bQ$-PVHS on $\cA_g$, then the irreducible automorphic $\bC$-PVHS of Calabi-Yau type of weight $g$ is a direct factor of $\Lambda^g \bV$. Denoting by $(\EE,\theta)$ the corresponding system of Hodge bundles, one has $\EE^g = \det(\EE^{1,0}) = \cL_{|\cA_g}$. By taking the canonical extensions of both sides, one gets $\EE^g = \cL$. Recall also  that $\cL^{\otimes (g + 1)}$ is isomorphic to the log-canonical bundle $\omega_{\overline{\cA_g}}(D)$ of $(\overline{\cA_g}, D)$.

Let $\bV$ be the canonical weight one $\bQ$-PVHS on $\cA_g$ and $(\EE_0 = \EE_0^{1,0} \oplus \EE_0^{0,1},\theta)$ be the corresponding system of Hodge bundles. The irreducible automorphic $\bC$-PVHS of Calabi-Yau type of weight $g$ is a direct factor of $\Lambda^g \bV$. Denoting by $(\EE,\theta)$ the corresponding system of Hodge bundles, one has $\EE^g = \det(\EE_0^{1,0}) = \cL_{|\cA_g}$. By taking the canonical extensions of both sides, one gets $\EE^g = \cL$. Recall also  that $\cL^{\otimes (g + 1)}$ is isomorphic to the log-canonical bundle $\omega_{\overline{\cA_g}}(D)$ of $(\overline{\cA_g}, D)$.

\begin{lem}[Nadel {\cite[proof of theorem 3.1]{Nadel}}, see also {\cite[proposition 4.1]{AV16}}] Let $\overline{\cA_g(n)}$ be a smooth toroidal compactification of $\cA_g(n)$ such that the map $\pi_n$ extends to a map $\overline{\pi_n} : \overline{\cA_g(n)} \arrow \overline{\cA_g}$.
Then $\overline{\pi_n}$ is ramified to order at least $n$ over the divisor at infinity $D$.
\end{lem}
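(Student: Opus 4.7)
The plan is to perform a local calculation near the generic point of each irreducible component of $D$, using the explicit toric coordinates of \cite{AMRT} and exploiting the fact that the translation lattice at each cusp is scaled by $n$ when passing from $\Gamma = \mathrm{Sp}(2g,\bZ)$ to the principal congruence subgroup $\Gamma(n)$.

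Fix an irreducible component $D_0 \subset D$ and choose a rational boundary component $F$ of $\bH_g$ corresponding to the stratum of $\overline{\cA_g}$ that contains the generic point of $D_0$. The unipotent radical $U(F)$ of the stabilizer of $F$ contains a distinguished subgroup acting on $\bH_g$ by translations $\tau \mapsto \tau + S$, with $S$ ranging over a real vector space $V_F$ of symmetric matrices; the intersection $\Lambda := V_F \cap \Gamma$ is a lattice of integer symmetric matrices and the corresponding intersection with $\Gamma(n)$ is exactly $n\Lambda$. Choose a $\Gamma$-admissible rational polyhedral fan $\Sigma$ in the positive cone of $V_F$ from which $\overline{\cA_g}$ is constructed near the $F$-stratum; the same fan is $\Gamma(n)$-admissible and the corresponding toroidal compactification $\overline{\cA_g(n)}$ admits the desired extension $\overline{\pi_n}$ of $\pi_n$. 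After restricting to a smooth toric chart corresponding to a cone $\sigma \in \Sigma$, the local ring on $\overline{\cA_g}$ is $\bC[\sigma^\vee \cap \Lambda^\vee]$ while the local ring on $\overline{\cA_g(n)}$ is $\bC[\sigma^\vee \cap \tfrac{1}{n}\Lambda^\vee]$. Writing the generating monomials as $q_\mu = e^{2\pi i \langle \tau, \mu \rangle}$ for $\mu \in \Lambda^\vee$ and $q'_\nu$ for $\nu \in \tfrac{1}{n}\Lambda^\vee$, one has the relation $q_\mu = (q'_{\mu/n})^n$.

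The component $D_0$ is defined locally by the vanishing of a monomial $q_{\mu_0}$ for $\mu_0 \in \Lambda^\vee$ a primitive generator of a ray of $\sigma$; its pullback under $\overline{\pi_n}$ is $(q'_{\mu_0/n})^n = 0$, so each component of $\overline{\pi_n}^{-1}(D_0)$ appears in $\overline{\pi_n}^{\ast}(D_0)$ with multiplicity divisible by $n$. As $D_0 \subset D$ was arbitrary, $\overline{\pi_n}$ is ramified to order at least $n$ along every component of $D$.

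The main point requiring justification is the existence of compatible toroidal data for $\Gamma$ and $\Gamma(n)$ extending $\pi_n$, which is classical from \cite{AMRT} since any $\Gamma$-admissible collection of fans is automatically $\Gamma(n)$-admissible. The residual finite group actions coming from the stabilizer of $F$ modulo its translation subgroup do not affect the ramification along $D_0$, because the ramification is determined entirely by the index-$n$ inclusion $n\Lambda \hookrightarrow \Lambda$ of translation lattices; the quotient group acts by permutations on the boundary components and preserves the multiplicity of each.
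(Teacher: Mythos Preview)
The paper does not supply its own proof of this lemma; it is stated with attribution to Nadel and to \cite{AV16}, and your argument is precisely the standard one given in those references (and is the same local computation underlying Mumford's general statement recalled in Example~\ref{arithmetic_quotients_are_UR}). The key point---that the center of the unipotent radical at each cusp intersects $\Gamma(n)$ in exactly $n\Lambda$---is correct, and the toric computation $\overline{\pi_n}^{\,*} q_{\mu_0} = (q'_{\mu_0/n})^n$ gives the claimed ramification.

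One minor remark on generality: as written you construct a \emph{particular} smooth toroidal compactification of $\cA_g(n)$ (the one built from the same fan $\Sigma$ as $\overline{\cA_g}$), whereas the lemma is phrased for an arbitrary smooth toroidal compactification admitting an extension of $\pi_n$. This is harmless: the identity $\overline{\pi_n}^{\,*} q_{\mu_0} = (q'_{\mu_0/n})^n$ is an equality of meromorphic functions on the partial quotient by the translation lattice $n\Lambda$, hence the order of vanishing of $\overline{\pi_n}^{\,*} q_{\mu_0}$ along \emph{any} boundary divisor of \emph{any} toroidal model dominating that partial quotient is divisible by $n$. Alternatively, for the application in Section~\ref{case_of_Ag} one only needs the existence of one such compactification, which is exactly what you produce.
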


\begin{lem}[Weissauer \cite{Weissauer86}]
For any $x \in \cA_g$ and any $\epsilon >0$, there exists $l,m  \in \bN$ with $ (12 + \epsilon) \cdot m \geq l$ and a section of $\cL^{\otimes l}(- m D)$ non-vanishing at $x$. 
\end{lem}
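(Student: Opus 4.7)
The plan is to translate the statement into classical Siegel modular form language and then invoke Weissauer's construction, with the value $12$ ultimately traced back to the familiar $g=1$ case.

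First, I would set up the dictionary between sheaf-theoretic data and automorphic forms. Using Mumford's description of the canonical extension of $\cL$ together with the Koecher principle applied to each toroidal boundary stratum, I would identify $\mathrm{H}^0(\overline{\cA_g}, \cL^{\otimes l}(-mD))$ with the space of Siegel cusp forms of weight $l$ on $\mathfrak{H}_g$ whose Fourier--Jacobi expansions along each irreducible component of the toroidal boundary vanish to order at least $m$. After this identification the statement becomes: for every $\tau \in \mathfrak{H}_g$ and every $\epsilon>0$, there is a Siegel cusp form of weight $l$ with boundary vanishing order at least $m$, satisfying $l/m \leq 12+\epsilon$ and non-vanishing at $\tau$.

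Second, I would construct such cusp forms via Poincar\'e series. The baseline is $g=1$, where the modular discriminant $\Delta \in S_{12}(\mathrm{SL}_2(\mathbb{Z}))$ has weight $12$, vanishes to order exactly $1$ at the cusp, and has no zeros on $\mathfrak{H}_1$: so $l=12$, $m=1$ works uniformly for every $x \in \cA_1$. For higher $g$, Weissauer averages Poincar\'e series of the form
\[
P_{l,T}(\tau) := \sum_{\gamma \in \Gamma_\infty\backslash \mathrm{Sp}_{2g}(\mathbb{Z})} \det(c\tau+d)^{-l}\, e^{2\pi i \,\mathrm{tr}(T \gamma\tau)}
\]
for appropriately chosen positive definite symmetric matrices $T$, controlling the leading Fourier coefficients so as to force vanishing of order $m$ along the boundary. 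Pointwise non-vanishing at a chosen lift $\tau_0 \in \mathfrak{H}_g$ of $x$ comes from the reproducing-kernel property: for a suitably chosen kernel $P_{\tau_0, l}$ one has $\langle f, P_{\tau_0, l}\rangle = c_l \cdot f(\tau_0)$ in the Petersson pairing, so that whenever the ambient space of cusp forms is non-zero, some element is non-vanishing at $\tau_0$. (Alternatively, having produced a non-zero section $s_0$ of $\cL^{\otimes l_0}(-m_0 D)$ for slope just below $12+\epsilon/2$, one combines with sections of $\cL^{\otimes N}$ supplied by the ampleness on $\cA_g$ asserted in Theorem \ref{amplitude_L} to trade a small amount of the remaining slope budget for non-vanishing at $x$.)

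The main obstacle is the second step, which is the substantive content of Weissauer's theorem: constructing cusp forms of slope approaching the sharp value $12$. This bound reflects the slope inequality for effective divisors on $\overline{\cA_g}$ established by Tai, and its proof requires either delicate analytic estimates on the Poincar\'e series above, or asymptotic dimension estimates via Hirzebruch--Mumford proportionality combined with an auxiliary construction (Ikeda-type lifts from elliptic cusp forms of weight close to $12$, for instance). The rest of the argument is essentially formal once this input is available.
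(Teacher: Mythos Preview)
The paper does not prove this lemma at all: it is stated with attribution to Weissauer \cite{Weissauer86} and used as a black box. So there is nothing to compare your argument against; you are attempting to supply content that the author deliberately outsourced.

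As for the sketch itself: the first step (the dictionary identifying $\HH^0(\overline{\cA_g},\cL^{\otimes l}(-mD))$ with Siegel modular forms of weight $l$ vanishing to order at least $m$ along the boundary) is correct and is the right starting point. Your parenthetical alternative for obtaining non-vanishing at a prescribed $x$---namely, once one has \emph{some} nonzero section of $\cL^{\otimes l_0}(-m_0 D)$ with $l_0/m_0$ close to $12$, multiplying by a section of a high power of $\cL$ not vanishing at $x$ (which exists since $\cL$ is ample on $\cA_g$, cf.\ Theorem~\ref{amplitude_L}) and absorbing the slight increase in $l/m$ into the $\epsilon$---is the cleanest way to handle that part, and is closer in spirit to how such reductions are usually done than the reproducing-kernel argument. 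The substantive gap, which you yourself flag, is the actual construction of a nonzero form with slope approaching $12$: this is precisely Weissauer's theorem, and your Poincar\'e-series paragraph is more a list of ingredients than an argument (and the mention of Ikeda lifts is anachronistic relative to \cite{Weissauer86}). In short, your proposal correctly reduces the lemma to Weissauer's existence result and then, like the paper, cites it.
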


As $\cL$ is big over $\cA_g$, it follows that the line bundle $\cL^{\otimes l}(- m D)$ is big over $\cA_g$ for any $l,m  \in \bN$ with $ 12 \cdot m <  l$. Arguing as in section \ref{Proof of the main theorem}, we get the
\begin{theorem}
Let $g \geq 1$. For every $n >  12 \cdot g$, all subvarieties of $\cA_g(n)$ are of maximal cotangent dimension (hence a fortiori of general type).
\end{theorem}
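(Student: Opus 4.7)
The plan is to imitate the proof of Theorem \ref{precise_main_result} given in Section \ref{Proof of the main theorem}, but replacing the general ``highly ramified tower'' argument by the quantitative inputs provided by Weissauer's lemma (which controls bigness of $\cL^{\otimes l}(-mD)$ on $\overline{\cA_g}$) and Nadel's lemma (which controls the ramification of $\overline{\pi_n}$ over $D$). The rank of the Siegel bounded symmetric domain is $r_\cD = g$, and recall that $\EE^g = \cL$ for the Calabi-Yau type $\bC$-PVHS arising as a direct factor of $\Lambda^g \bV$.

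First I would fix a subvariety $Z \subset \cA_g(n)$ and choose integers $l,m$ such that
\[
12\,m < l \quad\text{and}\quad l\cdot g \leq n \cdot m.
\]
This is possible precisely because $n > 12 g$: taking any rational number $r \in (12, n/g]$ and writing $r = l/m$ works. By Weissauer's lemma, $\cL^{\otimes l}(-mD)$ is big over $\cA_g$. Let $f\colon (\overline{Y},E) \to (\overline{\cA_g},D)$ be a smooth projective log-compactification of a proper resolution of $\overline{\pi_n(Z)}$, chosen to map birationally onto its image. Proposition \ref{main_construction}, applied to this pair $(l,m)$ and to $f$, produces a Viehweg-big subsheaf of
\[
S^{l\cdot g}\,\Omega^1_{\overline{Y}}(\log E) \otimes_{\cO_{\overline{Y}}} f^*\cO_{\overline{X}}(-mD).
\]

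Next I would transport this Viehweg-big subsheaf to the cover $X_n$. Let $(\overline{Y}',E')$ be a smooth projective log-compactification of an irreducible component of $(\overline{Y}-E)\times_X X_n$ lying over $Z$, chosen so that the restriction of $\pi_n$ extends to a map of log pairs $(\overline{Y}',E') \to (\overline{Y},E)$. By Nadel's lemma, this map is ramified to order at least $n$ along every component of $E'$ mapping into $D$. The estimate $l \cdot g \leq n \cdot m$ is exactly the inequality needed to run the argument of Theorem \ref{Mumford's trick} (property (3) with $N = n$): the pullback of sections of $S^{l\cdot g}\Omega^1_{\overline{Y}}(\log E) \otimes f^*\cO(-mD)$ lands inside $S^{l\cdot g}\Omega^1_{\overline{Y}'}$, and the resulting diagram commutes. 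Since $\overline{Y}' \to \overline{Y}$ is generically finite surjective, Lemma \ref{Viehweg} shows that $S^{l\cdot g}\Omega^1_{\overline{Y}'}$ itself contains a Viehweg-big subsheaf. By Lemma \ref{lemma_bigness}, $\Omega^1_{\overline{Y}'}$ is big, so $Z$ has maximal cotangent dimension and, by Theorem \ref{Campana-Paun}, is of general type.

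The main delicate point is the sharp numerical balancing: one must simultaneously make the line bundle $\cL^{\otimes l}(-mD)$ big (requiring $l > 12m$) and keep the ratio $l\cdot r_\cD / m = l g / m$ small enough to be absorbed by the ramification $n$ of the level-$n$ cover along the boundary (requiring $l g/m \leq n$). The window $12 < l/m \leq n/g$ is non-empty precisely when $n > 12 g$, which is why this is the threshold appearing in the statement. Everything else is essentially a direct application of Proposition \ref{main_construction} and Theorem \ref{Mumford's trick}, specialized from the general ``eventually highly ramified'' tower to the single cover $\overline{\pi_n}$ via the quantitative Nadel estimate.
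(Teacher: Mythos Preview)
Your proposal is correct and follows precisely the approach the paper intends: it specializes the argument of Section~\ref{Proof of the main theorem} by replacing the abstract highly-ramified tower with the explicit ramification bound from Nadel's lemma and the explicit bigness threshold from Weissauer's lemma, and the numerical window $12 < l/m \leq n/g$ you isolate is exactly the mechanism producing the bound $n > 12g$. The paper's own proof is the single line ``Arguing as in section~\ref{Proof of the main theorem}'', so you have in fact supplied more detail than the paper does.
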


\bibliographystyle{alpha}
\bibliography{biblio}

\end{document}